\newtheorem{theorem}{Theorem}[section]
\newtheorem{lemma}[theorem]{Lemma}
\newtheorem{prop}[theorem]{Proposition}
\newtheorem{re}[theorem]{Remark}
\newtheorem{no}[theorem]{Notation}
\newtheorem{definition}[theorem]{Definition}
\theoremstyle{definition}
\definecolor{wco}{rgb}{0.5,0.2,0.3}
\numberwithin{equation}{section}
\begin{document}

\def\beg{\begin}
\def\beq{\begin{equation}}
\def\enq{\end{equation}}

\title{Widths of embeddings of 2-microlocal Besov spaces
\thanks{Partially supported by the Natural Science Foundation of China (Grant No.
10671019) and by Anhui Provincial Natural Science Foundation (No.
090416230).
\newline\indent\ \, E-mail addresses: fanggs@bnu.edu.cn (G. Fang),
 shzhang27@163.com (S. Zhang).}}
\author{Shun Zhang $^{a,\, b}$\ \ and\ \ Gensun Fang
$^{a}$
\\
{\small $^{a}$ School of Mathematical Sciences, Beijing Normal
University, Beijing 100875, China}
\\ {\small $^{b}$ School of Computer Science and Technology,
Anhui University,
 Hefei 230039, Anhui, China} } \maketitle

\begin{abstract}
We consider the asymptotic behaviour of the approximation, Gelfand
and Kolmogorov numbers of compact embeddings between 2-microlocal
Besov spaces with weights defined in terms of the distance to a
$d$-set $U\subset \mathbb{R}^n$. The sharp estimates are shown in
most cases, where the quasi-Banach setting is included.
\end{abstract}
{\bf Key words:}\, Approximation numbers;
 Gelfand numbers; Kolmogorov numbers;
  Compact embeddings; 2-microlocal Besov spaces.\\
{\bf Mathematics Subject Classification (2010):}\,
41A46,~\,46E35,~\,47B06.

\section{Introduction}

In this paper we investigate the embeddings between 2-microlocal
Besov spaces with one special type of weights from the standpoint of
certain approximation quantities. More precisely, we are interested
in asymptotic behaviour of the approximation, Gelfand and Kolmogorov
numbers. This problem has recently been suggested only for entropy
numbers by Leopold and Skrzypczak \cite{LS11}. First, we recall some
definitions.

Let $\varphi$ be a positive function from the Schwartz space
$\mathcal {S}(\mathbb{R}^n)$ of infinitely differentiable and
rapidly decreasing functions with
\begin{equation}\label{varphi0}
\varphi(x)=1\ \ {\rm for}\ |x|\le 1 \ \ {\rm and}\ \ {\rm supp}\,
\varphi\subset\{x:|x|\le2\}.
\end{equation}
We set $\varphi_0=\varphi$ and $
\varphi_j(x)=\varphi(2^{-j}x)-\varphi(2^{-j+1}x)$ for $j\in
\mathbb{N}$ and $x\in\mathbb{R}^n$. This leads to the smooth dyadic
resolution $\{\varphi_j\}_{j\in\mathbb{N}_0}$ of unity, i.e.,
$\sum_{j=0}^\infty\varphi_j(x)=1,\ x\in\mathbb{R}^n$, so
\begin{equation*}
f=\varphi_0(D)f+\sum\limits_{j=1}^\infty\varphi_j(D)f,
 \quad\quad f\in\mathcal {S}^\prime(\mathbb{R}^n),
\end{equation*}
where
\begin{equation*}
\varphi_j(D)f(x):=(2\pi)^{-n}\iint
e^{i\xi(x-y)}\varphi_j(\xi)f(y)dyd\xi.
\end{equation*}
For a bounded subset $U\subset \mathbb{R}^n$, we denote ${\rm
dist}(x,U)=\inf_{y\in U}|x-y|$, and we define for $s^\prime\in
\mathbb{R}$ the 2-microlocal weights by
\begin{equation}\label{w_U}
w_j(x):=(1+2^j\,{\rm dist}(x,U))^{s^\prime},\ \ j\in\mathbb{N}_0,\
x\in\mathbb{R}^n.
\end{equation}
This type of weight sequences is just a typical example for
admissible weight sequences. We refer to \cite{Ke08,Ke10} for
detailed discussions of a large class of admissible weight
sequences. The case of single weights seems more familiar to us,
cf., e.g., \cite{HS11,KLSS06b,ZF10}.

Given\ $0<p,q\le\infty$\ and $s,s^\prime\in \mathbb{R}$, we define
the 2-microlocal spaces $ B_{p,q}^{s,s^\prime}(\mathbb{R}^n, U)$ by
$$
B_{p,q}^{s,s^\prime}(\mathbb{R}^n, U)=\Big\{f\in\mathcal
{S}^\prime(\mathbb{R}^n)\,:\ \|f\, |\,
B_{p,q}^{s,s^\prime}(\mathbb{R}^n, U)\|<\infty\Big\},
$$
where
$$
\|f\, |\, B_{p,q}^{s,s^\prime}(\mathbb{R}^n, U)\|= \Bigg(
\sum\limits_{j=0}^\infty 2^{jsq}
\|w_j\varphi_j(D)f\,|\,L_p(\mathbb{R}^n)\|^q \Bigg)^{1/q}.
$$

There is an analogous definition for
$F_{p,q}^{s,s^\prime}(\mathbb{R}^n, U)$. Moritoh and Yamada
introduced in \cite{MY96} the spaces
$B_{p,q}^{s,s^\prime}(\mathbb{R}^n, U)$ of homogeneous type in case
when $U\subset \mathbb{R}^n$ is open.

2-Microlocal Besov spaces $B^{s,mloc}_{p,q} (\mathbb{R}^n,w)$ with
more general admissible weights were introduced by Kempka
\cite{Ke08,Ke10b}, and generalized the 2-microlocal spaces
$C^{s,s^\prime}_{x_0} (\mathbb{R}^n)$ introduced by Bony \cite{Bo}
and Jaffard \cite{Ja} in two directions. We refer to
\cite{Ke08,Ke09,Ke10,MX97} for systematic discussions of this
concept, its history and further references.

Following Leopold and Skrzypczak \cite{LS11}, we concentrate on  the
embeddings,
\begin{equation}\label{BB}
B_{p_1,q_1}^{s_1,s_1^\prime}(\mathbb{R}^n, U)\hookrightarrow
B_{p_2,q_2}^{s_2,s_2^\prime}(\mathbb{R}^n, U),
\end{equation}
where $U$ is a $d$-set (the precise definition of $d$-sets will be
given in Section \ref{prel}).

Our main intention in this paper is to find the optimal asymptotic
order of the approximation, Gelfand and Kolmogorov numbers of the
embeddings (\ref{BB}). Our approach is essentially a combination of
\cite{LS11} and \cite{Sk05} with its corrigendum \cite{SV09}. In
particular,
 Leopold and Skrzypczak  \cite{LS11} gave a
necessary and sufficient condition on the parameters and weights of
the 2-microlocal Besov spaces which guarantees compactness of the
embeddings (\ref{BB}), and determined the entropy estimates for such
embeddings. Moreover, our main tools are the use of operator ideals,
see \cite{Car81,Pie78,Pie87}, and the basic estimates of related
widths of the Euclidean ball due to Kashin \cite{Ka77}, Gluskin
\cite{Gl83} and Edmunds and Triebel \cite{ET96} with
\cite{FPRU,GG84,LGM96,Vy08}.

The paper is structured as follows. In Section 2, we introduce
approximation, Gelfand and Kolmogorov numbers, and present our main
results.  Section 3 represents the most dominant part of this paper;
here we adopt a wavelet description of the 2-microlocal Besov spaces
$ B_{p,q}^{s,s^\prime}(\mathbb{R}^n, U)$, and prove their width
estimates of embeddings of related sequence spaces. Finally, in
Section 4, these results will be used to derive the desired
estimates for the function space embeddings under consideration.

Throughout the paper (unless additional restrictions are mentioned)
we suppose that
\begin{equation}\label{agree}
s, s_1, s_2, s^\prime, s_1^\prime, s_2^\prime\in\mathbb{R},\ \ 0<p,
p_1, p_2, q, q_1, q_2\leq\infty,\ \ \delta=s_1-s_2-n(\frac
1{p_1}-\frac 1{p_2})>0.
\end{equation}
 For a real number $a$, we define $a_+=\max(a,0)$. And let $\frac
1{p^*}=(\frac 1{p_2}-\frac 1{p_1})_+$.

\begin{no}
By the symbol ` $\hookrightarrow$'  we denote continuous embeddings.

Identity operators will always be denoted by {\rm id}. Sometimes we
do not indicate the spaces where {\rm id} is considered, and
likewise for other operators.

Let $X$ and $Y$ be complex quasi-Banach spaces and denote by
$\mathcal {L}(X, Y)$ the class of all linear continuous operators
$T:\,X \rightarrow\, Y.$ If no ambiguity arises, we write $\|T\|$
instead of the more exact versions $\|T ~|~ \mathcal {L}(X, Y)\|$ or
$\|T:X\rightarrow Y\|$.

The symbol $a_k \preceq b_k$ means that there exists a constant $c
> 0$\ such
that\ $a_k\le c\,b_k$\ for all\ $k\in\mathbb{N}.$\ And $a_k \succeq
b_k$ stands for $b_k \preceq a_k,$\ while $a_k\sim b_k$ denotes\
$a_k\preceq b_k \preceq a_k.$

All unimportant constants will be denoted by $c$ or $C$, sometimes
with additional indices.
\end{no}

\section{Main results}

We recall the definitions of the approximation, Gelfand and
Kolmogorov numbers, see \cite{Pie78, Pin85}.  We use the symbol
$A\subset\subset B$ if $A$ is a closed subspace of a topological
vector space $B$.

\begin{definition}
Let $T \in\mathcal {L}(X, Y).$
\begin{enumerate}
\item[{\rm (i)}]\
The {\rm $k$th approximation number}\, of~ $T$ is defined by
\begin{equation*}
a_k(T, \,X, \,Y)=\inf\{\|T - A\|:~ A\in \mathcal{L}(X,Y) ~\,{\rm
with~ rank} (A) < k\},\quad k\in \mathbb{N},
\end{equation*}
also written by $a_k(T)$ if no confusion is possible. Here ${\rm
rank} (A)$ is the dimension of the range of the operator $A$.
\item[{\rm (ii)}]\
The {\rm $k$th Kolmogorov number}\, of~ $T$ is defined by
\begin{equation*}
d_k(T, X, Y)=\inf\{\|Q_N^YT\|:\,N\subset\subset Y,\,{\rm dim}
(N)<k\},
\end{equation*}
also written by $d_k(T)$ if no confusion is possible. Here, $Q_N^Y$
stands for the natural surjection of\,\,\,$Y$ onto the quotient
space $Y/N$.
\item[{\rm (iii)}]\
The {\rm $k$th Gelfand number}\, of~ $T$ is  defined by
\begin{equation*}
c_k(T, X, Y)=\inf\{\|TJ_M^X\|:\,M\subset\subset X,\,{\rm codim}
(N)<k\},
\end{equation*}
also written by $c_k(T)$ if no confusion is possible. Here, $J_M^X$
stands for the natural injection of\,\,\,$M$ into $X$.
\end{enumerate}
\end{definition}

Note that the $k$-th approximation, Kolmogorov and Gelfand number
are identical to the $(k-1)$-th linear, Kolmogorov and Gelfand width
of $T$, respectively, see Pinkus \cite{Pin85}.

It is well-known that the operator $T$ is compact if and only if
$\lim_k d_k(T)=0$ or equivalently $\lim_k c_k(T)=0$, but if $\lim_k
a_k(T)=0$, see \cite{Pin85}. The opposite implication for $a_k(T)$
is not true in general.

Both concepts, Kolmogorov and Gelfand numbers, are related to each
other. Namely they are dual to each other in the following sense,
cf. \cite{Pie78, Pin85}: If $X$ and $Y$ are Banach spaces, then
\begin{equation}\label{dualc*d}
c_k(T^\ast)=d_k(T)
\end{equation}
for all compact operators $T\in\mathcal{L}(X, Y)$ and
\begin{equation}\label{duald*c}
d_k(T^\ast)=c_k(T)
\end{equation}
for all $T\in\mathcal{L}(X, Y).$

Both, Gelfand and Kolmogorov numbers, are subadditive and
multiplicative s-numbers, as well as approximation numbers. One may
consult Pietsch \cite{Pie87}(Sections 2.4, 2.5), for the proof in
the Banach space case. And the generalization to $p$-Banach spaces
follows obviously. Let $Y$ be a $p$-Banach space,\ $0<p\le 1$. And
let $s_k$ denote any of the three quantities\ $a_k,\, d_k$ or $c_k$.
Then we collect several common properties of them as follows,
\vspace{-0.2cm}
\begin{enumerate}
\item[]{\rm\bf(PS1)}\ (monotonicity)\,
$\|T\|=s_1(T)\ge s_2(T)\ge\cdots\ge 0$ for all $T\in\mathcal{L}(X,
Y)$,\vspace{-0.2cm}

\item[]{\rm\bf(PS2)}\ (subadditivity)\, $s_{m+k-1}^p(S+T)\leq
s_m^p(S)+s_k^p(T)$\, for all $m, k\in\mathbb{N},\,\,S,
T\in\mathcal{L}(X, Y)$,\vspace{-0.2cm}

\item[]{\rm\bf(PS3)}\ (multiplicativity)\, $s_{m+k-1}(ST)\leq
s_m(S)s_k(T)$\, for all $T\in\mathcal{L}(X, Y)$, $S\in\mathcal{L}(Y,
Z)$

\quad\quad and $m, k\in\mathbb{N},$\, cf. \cite{Pie78}(p. 155),
where $Z$ denotes a quasi-Banach space,\vspace{-0.2cm}

\item[]{\rm\bf(PS4)}\ (rank property)\, ${\rm rank}(T)<k$ if and only if
$s_k(T)=0$, where $T\in\mathcal{L}(X, Y)$.\vspace{-0.2cm}
\end{enumerate}

Moreover, there exist the following relationships: \beq\label{acd}
 c_k(T)\le a_k(T),~~~~ d_k(T)\le a_k(T),~~~ k \in \mathbb{N}.
\enq

 Now we
recall the characterization of compactness of the embeddings under
consideration, which was proved in \cite{LS11}.
\begin{prop}\label{compact}
Let $U$ be a $d$-set, $0 \le d \le n$, $w_{i,j}(x)=(1+2^j\,{\rm
dist}(x,U))^{s_i^\prime},\,i=1,2$,  and
$s^\prime=s_1^\prime-s_2^\prime>0$. Then the embedding (\ref{BB}) is
compact if and only if\ $\delta> d/{p^\ast}$\ and\ $s^\prime>
n/{p^\ast}$.
\end{prop}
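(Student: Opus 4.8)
The plan is to move the whole problem off the function spaces and onto weighted sequence spaces by a wavelet isomorphism, and then to read off compactness from a block decomposition governed by the geometry of the $d$-set $U$. First I would fix a sufficiently smooth, compactly supported Daubechies wavelet system and invoke the wavelet characterization of $B_{p,q}^{s,s^\prime}(\mathbb{R}^n,U)$: the coefficient map is an isomorphism onto a sequence space in which the $2$-microlocal weight $w_j(x)=(1+2^j\,\mathrm{dist}(x,U))^{s^\prime}$ enters only as a scalar factor $(1+2^j\,\mathrm{dist}(Q_{j,m},U))^{s^\prime}$ attached to the cube $Q_{j,m}$ of side $2^{-j}$. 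Under this isomorphism the embedding (\ref{BB}) turns into a block-diagonal operator $\mathrm{id}$ between weighted sequence spaces, and the two operators have comparable $s$-numbers, in particular equivalent compactness.

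Next I would sort the cubes at each dyadic level $j$ into distance layers, writing $\mathrm{dist}(Q_{j,m},U)\sim 2^{\ell-j}$ for the layer indexed by $\ell\ge 0$, on which the weight quotient is comparable to the constant $2^{-\ell s^\prime}$ (recall $s^\prime=s_1^\prime-s_2^\prime>0$). The decisive input is a counting estimate supplied by the $d$-set condition $\mu\big(B(x,r)\cap U\big)\sim r^d$: for $\ell\lesssim j$ (distance $\lesssim 1$, so one sees the tube around the $d$-set) the number of cubes in layer $\ell$ is $M_{j,\ell}\sim 2^{jd}2^{\ell(n-d)}$, while for $\ell\gtrsim j$ (distance $\gtrsim 1$, so one sees full $n$-dimensional shells) it is $M_{j,\ell}\sim 2^{\ell n}$. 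On each $(j,\ell)$-block the embedding reduces to $\mathrm{id}\colon \ell_{p_1}^{M_{j,\ell}}\to\ell_{p_2}^{M_{j,\ell}}$, scaled by the smoothness factor $2^{-j\delta}$ and the weight factor $2^{-\ell s^\prime}$; since the norm of that finite-dimensional identity is $M_{j,\ell}^{1/p^\ast}$, the block norm is comparable to $2^{-j(\delta-d/p^\ast)}2^{\ell((n-d)/p^\ast-s^\prime)}$ near $U$ and to $2^{-j\delta}2^{\ell(n/p^\ast-s^\prime)}$ far from $U$.

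From here both directions are transparent. For sufficiency, $\delta>d/p^\ast$ forces the near-$U$ blocks to decay in $j$ (and, because then $s^\prime>n/p^\ast\ge(n-d)/p^\ast$ as well, also in $\ell$), while $s^\prime>n/p^\ast$ forces the far blocks to decay in $\ell$; hence the block norms tend to $0$ as $j+\ell\to\infty$, truncation to the finitely many blocks with $j+\ell\le N$ is a finite-rank operator, and by the monotonicity and subadditivity properties (PS1), (PS2) the approximation numbers of the remainder vanish, so $\mathrm{id}$ is compact. For necessity I would argue contrapositively: if $\delta\le d/p^\ast$ I spread a normalized extremal sequence over the $\sim 2^{jd}$ cubes of the layer $\ell=0$ across growing scales $j$, and if $s^\prime\le n/p^\ast$ I let a normalized sequence march outward through the shells $\ell\to\infty$ at a fixed scale; in each case the building blocks stay uniformly bounded in the source but uniformly separated in the target, so no subsequence converges.

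The main obstacle is the geometric counting and its exact exponents, especially the transition region $\ell\sim j$ where the tube estimate $2^{jd}2^{\ell(n-d)}$ must be glued to the shell estimate $2^{\ell n}$; pinning this down rigorously needs the full two-sided $d$-set bound together with a careful treatment of the overlap. A second, purely technical, difficulty is to carry every inequality through the quasi-Banach range $0<p,q<1$, replacing the triangle inequality by its $p$-analogue in the summation over blocks and in the finite-rank truncation.
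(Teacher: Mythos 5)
Your outline is essentially the argument the paper relies on: the paper does not reprove this proposition but recalls it from Leopold--Skrzypczak \cite{LS11}, and the machinery you describe --- the wavelet isomorphism of Proposition \ref{iso} with the reduction (\ref{AABB}), the distance layers $I_{j,i}$ with the counting Lemma \ref{Nji} giving $2^{in}2^{(j-i)d}$ resp.\ $2^{in}$, and the block norms $M_{j,i}^{1/p^\ast}$ yielding $2^{-j(\delta-d/p^\ast)}2^{-i(s^\prime-(n-d)/p^\ast)}$ near $U$ and $2^{-j\delta}2^{-i(s^\prime-n/p^\ast)}$ far from $U$ --- is exactly what Section \ref{snss} sets up and what \cite{LS11} uses for both directions. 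The two technical points you flag (matching the two counting regimes at $i\sim j$, and summing the tail of the block decomposition via the $\rho$-triangle inequality in the quasi-Banach case) are handled there in just the way you indicate, so there is no genuine gap.
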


For $0<p\le\infty,$ we set\vspace{0.2cm}

$p^\prime=
\begin{cases}
\frac p{p-1}\quad &{\rm if}\ 1<p<\infty,\\
1 &{\rm if}\ p=\infty,\\
\infty &{\rm if}\ 0<p\le 1.
\end{cases}
$\vspace{0.2cm}

We are now in position to state our main results.
\begin{theorem}\label{T1}
Let $U$ be a $d$-set, $0 \le d \le n$, and $w_{i,j}(x)=(1+2^j\,{\rm
dist}(x,U))^{s_i^\prime},\,i=1,2$. Further, let
$t=\min(p_1^\prime,p_2),\, s^\prime=s_1^\prime-s_2^\prime>0$\ and\
$\frac 1{\tilde{p}}=\min\big(\frac\delta d,\frac
{s^\prime}n\big)+\frac 1{p_1}$. We assume that
 $0< p_1\le p_2\le \infty$\,\,or\,\,$\tilde{p}<p_2 < p_1\le
\infty$.

Denote by $a_k$ the $k$th approximation number of the embedding
(\ref{BB}). Then $a_{k}\sim k^{-\varkappa},$\ where\vspace{-0.2cm}
\beg{enumerate}
\item[$(i)$] $\varkappa=\min\big(\frac\delta d,\frac {s^\prime}n\big)$
\, if\, $0< p_1\le p_2\le 2$\,\,or\,\,$2\le p_1 \le p_2\le
\infty,$\vspace{-0.2cm}

\item[$(ii)$] $\varkappa =\min\big(\frac\delta d,\frac {s^\prime}n\big)
+\frac 1{p_1}-\frac 1{p_2}$\, if\,
$\tilde{p}<p_2<p_1\leq\infty$,\vspace{-0.2cm}

\item[$(iii)$] $\varkappa=\min\big(\frac\delta d,\frac {s^\prime}n\big)
 +\frac 12-\frac 1t$\,
 if\, $0< p_1 < 2 < p_2\le \infty$\,and \,$\min\big(\frac\delta d,\frac
{s^\prime}n\big)>\frac 1t$,\vspace{-0.2cm}

\item[$(iv)$] $\varkappa=\frac{s^\prime}n\cdot\frac t2$
\, if\, $0< p_1 < 2 < p_2\le \infty$ and $\delta>s^\prime,$ with the
following restrictions,

 $
\begin{cases}
\delta<\frac dt,\\ \delta-s^\prime<\frac{2d-n}t,
\end{cases}\ \ {\rm or}\ \
\begin{cases}
\delta+s^\prime<\frac nt,\\ \delta-s^\prime>\frac{2d-n}t,
\end{cases} $
 \end{enumerate}

Besides, suppose that in addition,  $0< p_1 < 2 < p_2\le \infty$ and
$\delta<s^\prime$. Then\vspace{-0.2cm} \beg{enumerate}
\item[$(i)$]
$k^{-\frac t2\cdot\min(\frac\delta d,\frac {s^\prime}n)}\preceq
a_{k}\preceq k^{-\frac t2\cdot\min(\frac\delta d,\frac
{\delta+s^\prime}{2n})}$\ if\ $\delta<\frac dt,\ s^\prime<\frac nt$
and $\delta-s^\prime<\frac{2d-n}t$;\vspace{-0.2cm}

\item[$(ii)$] $k^{-\frac t2\cdot\min(\frac\delta d,\frac
{s^\prime}n)}\preceq a_k\preceq k^{-\frac tn\cdot\min(\delta,\frac
{\delta+s^\prime}4)}$\ if\ $\delta+ s^\prime<\frac nt$ and
$\delta-s^\prime>\frac{2d-n}t$.
 \end{enumerate}
\end{theorem}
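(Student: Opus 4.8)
The plan is to follow the route announced in the introduction: transport the problem to sequence spaces, reduce it to finite-dimensional identities, and optimise. By the wavelet characterisation of $B_{p,q}^{s,s^\prime}(\mathbb{R}^n,U)$ from Section~3, the embedding (\ref{BB}) is isomorphic to an embedding of weighted sequence spaces. I would index the coefficients at resolution $j$ by the dyadic cubes $Q$ of side $2^{-j}$ and group them into shells according to ${\rm dist}(Q,U)\approx 2^{\,l-j}$. Since $U$ is a $d$-set, the number of cubes in the $(j,l)$-shell is $M_{j,l}\approx 2^{jd+l(n-d)}$ when $0\le l\le j$ (distance $\lesssim 1$) and $M_{j,l}\approx 2^{ln}$ when $l\ge j$ (distance $\gtrsim 1$, where the bounded set $U$ is seen as a point), while the weight on that shell is $w_{i,j}\approx 2^{\,l s_i^\prime}$. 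On the $(j,l)$-block the embedding thus acts as the scalar multiple $\sigma_{j,l}\,{\rm id}:\ell_{p_1}^{M_{j,l}}\to\ell_{p_2}^{M_{j,l}}$ with $\sigma_{j,l}=2^{-j\delta-l s^\prime}$, the whole operator being the direct sum of these pieces; the fine indices $q_1,q_2$ enter only through the elementary embeddings $b_{p,1}\hookrightarrow b_{p,q}\hookrightarrow b_{p,\infty}$ and do not affect the polynomial rate. I would then control the $s$-numbers of the direct sum through subadditivity and multiplicativity (PS2), (PS3) together with the block rule $a_k^{\,p}\big(\bigoplus_i T_i\big)\le\sum_i a_{k_i}^{\,p}(T_i)$, valid whenever $\sum_i(k_i-1)<k$.

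The only genuinely delicate ingredient is the finite-dimensional one. In the range $0<p_1<2<p_2\le\infty$ the relevant quantity is $a_k({\rm id}:\ell_{p_1}^{m}\to\ell_{p_2}^{m})$, whose order is governed by the Gluskin--Kashin estimates \cite{Ka77,Gl83} (see also \cite{ET96,FPRU,GG84,LGM96,Vy08}). In the critical range $k\approx m$ these numbers carry the Hilbert-space exponent $1/2$ twisted by $t=\min(p_1^\prime,p_2)$, which is exactly the source of the factor $t/2$ in the assertion. Crucially, the sharp order of $a_k(\ell_{p_1}^{m}\to\ell_{p_2}^{m})$ is itself known only up to a gap in part of this range, and it is this gap that the embedding will inherit, preventing the upper and lower estimates from meeting.

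For the lower bound I would restrict the embedding to a single, optimally chosen block, obtaining a scalar multiple $\sigma_{j,l}\,{\rm id}:\ell_{p_1}^{M}\to\ell_{p_2}^{M}$ of a finite-dimensional identity, and apply Gluskin's lower estimate. Setting $k\approx M_{j,l}$ and optimising $(j,l)$ over the two shell regimes reproduces the competition between the smoothness direction (giving $\delta/d$, from the near-field shells) and the localisation direction (giving $s^\prime/n$, from the far-field shells at low resolution $j$), so that the best block yields $a_k\succeq k^{-\frac t2\min(\delta/d,\,s^\prime/n)}$. Here the side conditions $\delta<d/t$, $s^\prime<n/t$ and the sign of $\delta-s^\prime-(2d-n)/t$ are precisely what place the optimal block in the critical range where Gluskin's bound applies and single out which shell regime is binding.

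For the upper bound I would instead distribute the rank budget $k$ among all blocks, insert the Kashin--Gluskin \emph{upper} estimate on each $\sigma_{j,l}\,{\rm id}$, sum by the $p$-triangle inequality (PS2), and optimise the allocation $\{k_{j,l}\}$. Because the finite-dimensional upper bound has a different profile across the critical range than the lower bound, the optimal allocation settles on a different corner of the $(j,l)$-region, producing the exponents $\frac t2\min\big(\delta/d,(\delta+s^\prime)/(2n)\big)$ in case~(i) and $\frac tn\min\big(\delta,(\delta+s^\prime)/4\big)$ in case~(ii); the dichotomy $\delta-s^\prime\lessgtr(2d-n)/t$ together with $\delta+s^\prime<n/t$ is exactly what selects which corner dominates. \textbf{The main obstacle} is therefore not the transference or the bookkeeping but the finite-dimensional geometry: closing the gap would require the sharp asymptotics of $a_k(\ell_{p_1}^{m}\to\ell_{p_2}^{m})$ throughout $p_1<2<p_2$, which are not available, so only the stated two-sided estimates can be reached. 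The remaining, more routine difficulty is to verify that the listed side conditions cut out exactly the parameter region in which these particular corners are the dominant ones.
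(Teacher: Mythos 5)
Your overall route coincides with the paper's: wavelet transfer to the weighted sequence spaces, decomposition of the identity into blocks $P_{j,i}$ indexed by level $j$ and distance shell $i$ with $|I_{j,i}|\sim 2^{jd+i(n-d)}$ for $i<j$ resp. $\sim 2^{in}$ for $i\ge j$ and $\|P_{j,i}\|\sim 2^{-j\delta-is^\prime}$, the Gluskin/Edmunds--Triebel/Vyb\'iral estimates for $a_k({\rm id},\ell_{p_1}^N,\ell_{p_2}^N)$, rank allocation plus {\rm(PS2)} for the upper bounds, and restriction to a single optimally chosen block for the lower bounds. Two technical points you gloss over, though they do not change the method: in case (i) the paper runs the summation over the infinitely many blocks through the operator ideal quasi-norms $L_{s,\infty}^{(a)}$ (following Carl and \cite{LS11}) rather than a bare rank allocation, and the endpoint $0<p_1\le 1$, $p_2=\infty$, where $t=\infty$, requires Vyb\'iral's Lemma \ref{an1inf} and a separate choice of the auxiliary parameter $\lambda$.

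The one substantive error is your diagnosis of why the last two estimates are only two-sided. It is not that the sharp order of $a_k({\rm id},\ell_{p_1}^m,\ell_{p_2}^m)$ is unknown: the finite-dimensional bounds invoked are two-sided and sharp in the ranges where the proof actually uses them (the lower bounds are only ever applied at $k\sim N$ or $k=[N^{2/t}]$, well inside the Gluskin regime). The gap is produced by the infinite-dimensional bookkeeping. When $\delta<s^\prime$ the upper bound is dominated by the tail $\triangle_6=\sum\|P_{j,i}\|$ over far diagonals $i+j=m>M_4$, whose worst term sits near $i\approx j\approx m/2$ and decays like $2^{-m(\delta+s^\prime)/2}$; this is what yields the exponents $\frac t2\cdot\frac{\delta+s^\prime}{2n}$ and $\frac tn\cdot\frac{\delta+s^\prime}{4}$. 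The lower bound, coming from a single block, only sees $\frac t2\min\big(\frac\delta d,\frac{s^\prime}n\big)$, and the two simply do not meet. Closing the gap would require either a cleverer approximation of the far-field tail or a lower bound that captures the joint contribution of many blocks, not better finite-dimensional asymptotics. Apart from this misattribution, your plan reproduces the paper's proof.
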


\begin{re}
Note that in the above assertion point (iv), as well as the latter
statements (i) and (ii), vanishes if\ $0< p_1 \le 1$ and
$p_2=\infty$.
\end{re}

\begin{theorem}\label{T2}
Let $U$ be a $d$-set, $0 \le d \le n$, and $w_{i,j}(x)=(1+2^j\,{\rm
dist}(x,U))^{s_i^\prime},\,i=1,2$. Further, let \vspace{-0.2cm}
 $$s^\prime=s_1^\prime-s_2^\prime>0,\ \ \theta = \frac{1/{p_1}-1/{p_2}}{1/2-1/{p_2}}\quad  and \quad\
\frac 1{\tilde{p}}=\min\big(\frac\delta d,\frac
{s^\prime}n\big)+\frac 1{p_1}.$$ We assume that  $0< p_1\le p_2\le
\infty$\,\,or\,\,$\tilde{p}<p_2 < p_1\le \infty$.

Denote by $d_k$ the $k$th Kolmogorov number of the embedding
(\ref{BB}). Then $d_{k}\sim k^{-\varkappa},$\ where \beg{enumerate}
\item[$(i)$] $\varkappa=\min\big(\frac\delta d,\frac {s^\prime}n\big)$
\, if\, $0< p_1\le p_2\le 2$\,\,or\,\,$2<p_1 = p_2\le \infty,$
\vspace{-0.2cm}
\item[$(ii)$] $\varkappa =\min\big(\frac\delta d,\frac {s^\prime}n\big)
+\frac 1{p_1}-\frac 1{p_2}$\, if\, $\tilde{p}<p_2<p_1\leq\infty$,
\vspace{-0.2cm}
\item[$(iii)$] $\varkappa=\min\big(\frac\delta d,\frac {s^\prime}n\big)
 +\frac 12-\frac 1{p_2}$\,
 if\, $0< p_1 < 2 < p_2\le \infty$\,and \,$\min\big(\frac\delta d,\frac
{s^\prime}n\big)>\frac 1{p_2}$, \vspace{-0.2cm}
\item[$(iv)$] $\varkappa=\frac{s^\prime}n\cdot\frac{p_2}2$
\, if\, $0< p_1 < 2 < p_2< \infty$ and $\delta>s^\prime,$ with the
following restrictions,
\\ $
\begin{cases}
\delta<\frac d{p_2},\\ \delta-s^\prime<\frac{2d-n}{p_2},
\end{cases}\ \ {\rm or}\ \
\begin{cases}
\delta+s^\prime<\frac n{p_2},\\
\delta-s^\prime>\frac{2d-n}{p_2}.
\end{cases} $\vspace{-0.2cm}

\item[$(v)$] $\varkappa=\min\big(\frac\delta d,\frac {s^\prime}n\big)
 +\frac 1{p_1}-\frac 1{p_2}$\,
 if\, $ 2\le p_1 < p_2\le \infty$\,and \,$\min\big(\frac\delta d,\frac
{s^\prime}n\big)>\frac \theta{p_2}$,\vspace{-0.2cm}

\item[$(vi)$] $\varkappa=\frac{s^\prime}n\cdot\frac{p_2}2$
\, if\, $2\le p_1<p_2< \infty$ and $\delta>s^\prime,$ with the
following restrictions,
\\ $
\begin{cases}
\delta<\frac d{p_2}\theta,\\
\delta-s^\prime<\frac{2d-n}{p_2}\theta,
\end{cases}\ \ {\rm or}\ \
\begin{cases}
\delta+s^\prime<\frac n{p_2}\theta,\\
\delta-s^\prime>\frac{2d-n}{p_2}\theta.
\end{cases} $
 \end{enumerate}

Besides, we have the following statements.\vspace{-0.2cm}

 \beg{enumerate}
\item[$(i)$] Suppose that in addition, $0< p_1 < 2 < p_2< \infty$
and $\delta<s^\prime$. Then

$(a)$\ $k^{-\frac {p_2}2\cdot\min(\frac\delta d,\frac
{s^\prime}n)}\preceq d_{k}\preceq k^{-\frac
{p_2}2\cdot\min(\frac\delta d,\frac {\delta+s^\prime}{2n})}$\ if\
$\delta<\frac d{p_2},\ s^\prime<\frac n{p_2}$ and
$\delta-s^\prime<\frac{2d-n}{p_2}$;

$(b)$\ $k^{-\frac {p_2}2\cdot\min(\frac\delta d,\frac
{s^\prime}n)}\preceq d_k\preceq
k^{-\frac{p_2}n\cdot\min(\delta,\frac {\delta+s^\prime}4)}$\ if\
$\delta+ s^\prime<\frac n{p_2}$ and
$\delta-s^\prime>\frac{2d-n}{p_2}$.
\item[$(ii)$] Suppose that in addition, $2\le p_1 < p_2< \infty$
and $\delta<s^\prime$. Then

$(a)$\ $k^{-\frac {p_2}2\cdot\min(\frac\delta d,\frac
{s^\prime}n)}\preceq d_{k}\preceq k^{-\frac
{p_2}2\cdot\min(\frac\delta d,\frac {\delta+s^\prime}{2n})}$\ if\
$\delta<\frac d{p_2}\theta,\ s^\prime<\frac n{p_2}\theta$ and
$\delta-s^\prime<\frac{2d-n}{p_2}\theta$;

$(b)$\ $k^{-\frac {p_2}2\cdot\min(\frac\delta d,\frac
{s^\prime}n)}\preceq d_k\preceq
k^{-\frac{p_2}n\cdot\min(\delta,\frac {\delta+s^\prime}4)}$\ if\
$\delta+ s^\prime<\frac n{p_2}\theta$ and
$\delta-s^\prime>\frac{2d-n}{p_2}\theta$.
 \end{enumerate}
\end{theorem}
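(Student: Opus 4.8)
\emph{Strategy.} The plan is to discretise the embedding (\ref{BB}) by the wavelet isomorphism of Section 3, to replace it by a diagonal embedding of the associated sequence spaces, and then to reduce the Kolmogorov numbers to those of finite-dimensional identities $\mathrm{id}\colon\ell_{p_1}^{M}\to\ell_{p_2}^{M}$, whose widths are supplied by the estimates for the Euclidean ball of Kashin, Gluskin and Edmunds--Triebel. The decisive geometric input is that $U$ is a $d$-set: relative to an $L_2(\mathbb{R}^n)$-normalised wavelet basis the coefficients at level $j$ split according to $\mathrm{dist}(\cdot,U)\sim 2^{l-j}$, there being $\sim 2^{jd}2^{l(n-d)}$ of them in the near field $0\le l\le j$ and $\sim 2^{ln}$ in the far field $l>j$, and on each group the quotient weight $w_{2,j}/w_{1,j}$ is comparable to $2^{-ls'}$. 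After renormalisation (\ref{BB}) becomes a direct sum over $(j,l)$ of the blocks $2^{-j\delta}2^{-ls'}\,\mathrm{id}\colon\ell_{p_1}^{M_{j,l}}\to\ell_{p_2}^{M_{j,l}}$ assembled by an outer $\ell_{q_1}\to\ell_{q_2}$ structure, where $\delta$ absorbs the $\mathbb{R}^n$-normalisation shift $n(1/p_1-1/p_2)$. Two extreme contributions govern the rate: the tangential modes on $U$ (where $l=0$, $M\sim 2^{jd}$, scalar $2^{-j\delta}$) produce the exponent $\delta/d$, while a single dyadic level spread over the far field (e.g. $j=0$, $M\sim 2^{ln}$, scalar $2^{-ls'}$) produces $s'/n$; their minimum is the ubiquitous $\min(\delta/d,s'/n)$, and by Proposition \ref{compact} the whole block operator is compact precisely under the standing hypotheses.

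\emph{Assembling the blocks.} I would estimate $d_k$ of each block from the known values of $d_m(\mathrm{id}\colon\ell_{p_1}^M\to\ell_{p_2}^M)$ and combine them through the subadditivity (PS2) and multiplicativity (PS3), the rank property (PS4) accounting for the blocks annihilated for free; upper bounds come from an optimal distribution of a rank budget $k$ among the $(j,l)$-blocks, lower bounds from restricting to a single critical block (for $\delta/d$) or to a single dyadic level in the far field (for $s'/n$). The quasi-Banach range is absorbed by using (PS2) in its $p$-convex form. The regimes (i)--(vi) mirror those of the finite-dimensional widths: for $p_1\le p_2\le 2$ or $p_1=p_2>2$ the identities are contractive and do not alter the main rate, giving the bare $\min(\delta/d,s'/n)$; for $p_2<p_1$ one picks up the shift $1/p_1-1/p_2$; and for $2\le p_1<p_2$ the parameter $\theta$ enters through the $\ell_2$-interpolation behaviour of $d_m$, which is exactly what produces the threshold $\min(\delta/d,s'/n)>\theta/p_2$ in (v) and the exponent in (vi). In the regular cases several exponents already coincide with those of the approximation numbers, so one may invoke Theorem \ref{T1} with $d_k\le a_k$ from (\ref{acd}) for the upper bounds and match them from below; in the mixed range $p_1<2<p_2$, however, the Kolmogorov and approximation widths of the finite-dimensional identity genuinely differ (the former governed by $p_2$ alone, the latter by $t=\min(p_1',p_2)$), so (iii)--(iv) must be derived directly from the ball estimates.

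\emph{Main obstacle.} The crux is the pair of ``Besides'' statements, i.e. the cases $0<p_1<2<p_2<\infty$ with $\delta<s'$ and $2\le p_1<p_2<\infty$ with $\delta<s'$. Here the scalar decay $2^{-ls'}$ competes with the growth of $M_{j,l}$, and the optimal shell index $l$ lands in the intermediate range of the finite-dimensional Kolmogorov widths, where $d_m(\ell_{p_1}^M\to\ell_{p_2}^M)$ has several breakpoints and is itself known only up to a gap of Kashin type. Consequently the upper bound obtained by splitting the $(j,l)$-sum at the critical scale and the lower bound obtained by testing a single critical block no longer coincide, leaving the two-sided estimates $k^{-\alpha}\preceq d_k\preceq k^{-\beta}$ with $\alpha\ne\beta$; the sub-cases (a) and (b) record the two possible dominant balances, selected (together with the side conditions $\delta<d/p_2$ resp. $\delta+s'<n/p_2$, and their $\theta$-rescaled analogues when $2\le p_1<p_2$) by the sign of $\delta-s'-(2d-n)/p_2$. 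I expect the delicate point to be the verification that the restrictions listed in (iv) and (vi) are precisely the ranges in which the single-block lower bound and the split-sum upper bound match, yielding there the sharp exponent $\frac{s'}{n}\cdot\frac{p_2}{2}$, whereas relaxing any one of them drives the optimal $l$ into the unresolved window and degrades the conclusion to the bracketed estimates. Finally, where duality is available (the Banach range) the Kolmogorov bounds can be cross-checked against the Gelfand bounds through (\ref{dualc*d})--(\ref{duald*c}); for $0<p\le 1$ they must be argued directly from the block decomposition.
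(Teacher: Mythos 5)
Your strategy coincides with the paper's own: wavelet discretization to the sequence spaces $\ell_{q_1}(2^{j\delta}\ell_{p_1}(w))\to\ell_{q_2}(\ell_{p_2})$, decomposition into blocks $P_{j,i}$ indexed by dyadic level and distance shell with $|I_{j,i}|\sim 2^{in}2^{(j-i)d}$ for $i<j$ and $\sim 2^{in}$ for $i\ge j$, the finite-dimensional Kolmogorov widths of Gluskin, Kashin, Garnaev--Gluskin and Vyb\'iral (with $t=\min(p_1',p_2)$ replaced by $p_2$, resp.\ by $p_2/\theta$ in the range $2\le p_1<p_2$) assembled by subadditivity and rank budgets for the upper bounds, single-block multiplicativity for the lower bounds, and $d_k\le a_k$ where the rates coincide --- exactly the content of Propositions \ref{kn4}--\ref{kn7}. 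The only presentational difference is that the paper channels part of the upper bounds through Pietsch's operator-ideal quasi-norms $L_{r,\infty}^{(d)}$ (which in particular absorb the logarithmic factors in the $p_2=\infty$ ball estimates and the reduction of $0<p_1<1$ to $p_1=\min(1,p_2)$), details your sketch leaves implicit but which do not change the argument.
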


\begin{re}
Points (iv) and (vi), as well as the latter statements (i) and (ii),
vanish if\ $p_2=\infty$.
\end{re}

\begin{theorem}\label{T3}
Let $U$ be a $d$-set, $0 \le d \le n$, and $w_{i,j}(x)=(1+2^j\,{\rm
dist}(x,U))^{s_i^\prime},\,i=1,2$. Further, let \vspace{-0.2cm}
 $$s^\prime=s_1^\prime-s_2^\prime>0,\ \ \theta_1 =
\frac{1/{p_2^\prime}-1/{p_1^\prime}}{1/2-1/{p_1^\prime}}\quad  and
\quad\ \frac 1{\tilde{p}}=\min\big(\frac\delta d,\frac
{s^\prime}n\big)+\frac 1{p_1}.$$ We assume that  $0< p_1\le p_2\le
\infty$\,\,or\,\,$\tilde{p}<p_2 < p_1\le \infty$.

Denote by $c_k$ the $k$th  Gelfand number of the embedding
(\ref{BB}). Then $c_{k}\sim k^{-\varkappa},$\ where
 \beg{enumerate}
\item[$(i)$] $\varkappa=\min\big(\frac\delta d,\frac {s^\prime}n\big)$
\, if\, $2\le p_1 \le p_2\le \infty$ \,\,or\,\ $0< p_1= p_2<
2$,\vspace{-0.2cm}

\item[$(ii)$] $\varkappa =\min\big(\frac\delta d,\frac {s^\prime}n\big)
+\frac 1{p_1}-\frac 1{p_2}$\, if\,
$\tilde{p}<p_2<p_1\leq\infty$,\vspace{-0.2cm}

\item[$(iii)$] $\varkappa=\min\big(\frac\delta d,\frac {s^\prime}n\big)
 +\frac 1{p_1}-\frac 12$\,
 if\, $0< p_1 < 2 < p_2\le \infty$\,and \,$\min\big(\frac\delta d,\frac
{s^\prime}n\big)>\frac 1{p_1^\prime}$,\vspace{-0.2cm}

\item[$(iv)$] $\varkappa=\frac{s^\prime}n\cdot\frac{p_1^\prime}2$
\, if\, $1< p_1 < 2 < p_2\le \infty$ and $\delta>s^\prime,$ with the
following restrictions,
\\ $
\begin{cases}
\delta<\frac d{p_1^\prime},\\
\delta-s^\prime<\frac{2d-n}{p_1^\prime},
\end{cases}\ \ {\rm or}\ \
\begin{cases}
\delta+s^\prime<\frac n{p_1^\prime},\\
\delta-s^\prime>\frac{2d-n}{p_1^\prime}.
\end{cases} $\vspace{-0.2cm}

\item[$(v)$] $\varkappa=\min\big(\frac\delta d,\frac {s^\prime}n\big)
 +\frac 1{p_1}-\frac 1{p_2}$\,
 if\, $0< p_1 < p_2\le 2$\,and \,$\min\big(\frac\delta d,\frac
{s^\prime}n\big)>\frac {\theta_1}{p_1^\prime}$,\vspace{-0.2cm}

\item[$(vi)$] $\varkappa=\frac{s^\prime}n\cdot\frac{p_1^\prime}2$
\, if\, $1< p_1 < p_2\le 2$ and $\delta>s^\prime,$ with the
following restrictions,
\\ $
\begin{cases}
\delta<\frac d{p_1^\prime}\theta_1,\\
\delta-s^\prime<\frac{2d-n}{p_1^\prime}\theta_1,
\end{cases}\ \ {\rm or}\ \
\begin{cases}
\delta+s^\prime<\frac n{p_1^\prime}\theta_1,\\
\delta-s^\prime>\frac{2d-n}{p_1^\prime}\theta_1.
\end{cases} $
 \end{enumerate}

Besides,  we have the following statements.\vspace{-0.2cm}
\beg{enumerate}
\item[$(i)$] Suppose that in addition, \, $1<
p_1 < 2 < p_2\le \infty$ and $\delta<s^\prime$. Then

$(a)$\  $k^{-\frac {p_1^\prime}2\cdot\min(\frac\delta d,\frac
{s^\prime}n)}\preceq c_{k}\preceq k^{-\frac
{p_1^\prime}2\cdot\min(\frac\delta d,\frac {\delta+s^\prime}{2n})}$\
if\ $\delta<\frac d{p_1^\prime},\ s^\prime<\frac n{p_1^\prime}$ and
$\delta-s^\prime<\frac{2d-n}{p_1^\prime}$,\vspace{-0.2cm}

$(b)$\  $k^{-\frac {p_1^\prime}2\cdot\min(\frac\delta d,\frac
{s^\prime}n)}\preceq c_k\preceq k^{-\frac
{p_1^\prime}n\cdot\min(\delta,\frac {\delta+s^\prime}4)}$\ if\
$\delta+ s^\prime<\frac n{p_1^\prime}$ and
$\delta-s^\prime>\frac{2d-n}{p_1^\prime}$.\vspace{-0.2cm}

\item[$(ii)$] Suppose that in addition,\ $1< p_1 < p_2\le 2$
and $\delta<s^\prime$. Then

$(a)$\ $k^{-\frac {p_1^\prime}2\cdot\min(\frac\delta d,\frac
{s^\prime}n)}\preceq c_{k}\preceq k^{-\frac
{p_1^\prime}2\cdot\min(\frac\delta d,\frac {\delta+s^\prime}{2n})}$\
if $\delta<\frac d{p_1^\prime}\theta_1,\ s^\prime<\frac
n{p_1^\prime}\theta_1$ and
$\delta-s^\prime<\frac{2d-n}{p_1^\prime}\theta_1$,\vspace{-0.2cm}

$(b)$\ $k^{-\frac {p_1^\prime}2\cdot\min(\frac\delta d,\frac
{s^\prime}n)}\preceq c_k\preceq
k^{-\frac{p_1^\prime}n\cdot\min(\delta,\frac {\delta+s^\prime}4)}$\
if\ $\delta+ s^\prime<\frac n{p_1^\prime}\theta_1$ and
$\delta-s^\prime>\frac{2d-n}{p_1^\prime}\theta_1$.
 \end{enumerate}
\end{theorem}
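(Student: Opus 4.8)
The plan is to derive Theorem~\ref{T3} from the Kolmogorov estimates of Theorem~\ref{T2} by duality, handling the genuinely quasi-Banach range by a direct argument on the wavelet sequence model. As prepared in Section~3, the wavelet isomorphism turns the embedding (\ref{BB}) into a block-diagonal identity between weighted sequence spaces, so it suffices to estimate the Gelfand numbers of that sequence-space map. At dyadic level $j$ the number of active coefficients splits into a boundary layer of size $\sim 2^{jd}$ attached to the $d$-set and a bulk of size $\sim 2^{jn}$, weighted respectively by $2^{-j\delta}$ and $2^{-js^\prime}$; the competition between these two regimes is what produces the recurring exponent $\min(\delta/d,s^\prime/n)$.

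In the Banach range $1\le p_1,p_2,q_1,q_2\le\infty$ I would invoke the duality (\ref{duald*c}), $c_k(\mathrm{id})=d_k(\mathrm{id}^\ast)$, and identify the adjoint of (\ref{BB}) with the embedding
$$B_{p_2^\prime,q_2^\prime}^{-s_2,-s_2^\prime}(\mathbb{R}^n,U)\hookrightarrow B_{p_1^\prime,q_1^\prime}^{-s_1,-s_1^\prime}(\mathbb{R}^n,U),$$
using the standard duality $\big(B_{p,q}^{s,s^\prime}\big)^\prime=B_{p^\prime,q^\prime}^{-s,-s^\prime}$ for the 2-microlocal Besov scale (with the usual care at the endpoints $p_i,q_i\in\{1,\infty\}$), and, at the sequence level, $(\ell_p^m)^\prime=\ell_{p^\prime}^m$. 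This interchanges the integrability indices, $p_1\leftrightarrow p_2^\prime$ and $p_2\leftrightarrow p_1^\prime$, while leaving $\delta$ and $s^\prime$ untouched; in particular the quantity $\theta$ of Theorem~\ref{T2} becomes precisely $\theta_1$. A term-by-term comparison then matches each case of Theorem~\ref{T3} with the corresponding case of Theorem~\ref{T2}: e.g.\ the Kolmogorov exponent $\min(\cdot)+\tfrac12-\tfrac1{p_2}$ of case (iii) transforms into $\min(\cdot)+\tfrac1{p_1}-\tfrac12$ and the side condition $\min(\cdot)>1/p_2$ into $\min(\cdot)>1/p_1^\prime$, exactly as in Theorem~\ref{T3}(iii), and likewise for (i), (v), (vi) and the two-sided statements.

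Several cases lie partly or wholly outside the Banach range and need the sequence model directly. The ``super-compact'' case (ii), $\tilde p<p_2<p_1\le\infty$, is the regime in which the three $s$-numbers coincide asymptotically, so it follows from a single computation common to Theorems~\ref{T1}--\ref{T3}; the diagonal part $p_1=p_2$ of case (i) is similar. For case (v) with $p_1<1$ one argues on the blocks: the upper bound sums, via subadditivity (PS2) and the weight decay, the finite-dimensional estimates $c_k(\mathrm{id}\colon\ell_{p_1}^{M_j}\to\ell_{p_2}^{M_j})$ of Kashin, Gluskin and Vyb\'iral cited in the Introduction, while the lower bound restricts the embedding to a single optimally chosen scale $j$ and invokes the same classical estimates from below, the optimization over $j$ reproducing $\min(\delta/d,s^\prime/n)$. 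The elementary relation $c_k\le a_k$ from (\ref{acd}), together with Theorem~\ref{T1}, supplies a matching comparison wherever the Gelfand and approximation numbers share their order.

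The main obstacle is the quasi-Banach lower bounds and the non-sharp two-sided estimates. Here the finite-dimensional Gelfand widths $c_k(\ell_{p_1}^m\to\ell_{p_2}^m)$ in the straddling regime $p_1<2<p_2$ are themselves known only up to a gap, and one must in addition weigh the boundary-layer contribution (dimension $\sim2^{jd}$, rate $\delta$) against the bulk contribution (dimension $\sim2^{jn}$, rate $s^\prime$). The delicate point is to check that the restrictions imposed in (iv), (vi) and in the ``Besides'' statements are exactly those under which one of the two geometric regimes strictly dominates, so that the single-scale lower bound meets the summed upper bound; the borderline configurations, where the regimes are comparable, are precisely the ones for which only a two-sided estimate with a gap survives.
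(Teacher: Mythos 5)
Your overall architecture --- wavelet reduction to sequence spaces, then obtaining the Gelfand results from the Kolmogorov ones by duality, with a direct argument in the quasi-Banach range --- correctly predicts the shape of Theorem~\ref{T3}, but it diverges from the paper in a way that leaves a real gap. The paper never dualizes the infinite-dimensional embedding. It applies (\ref{dualc*d})--(\ref{duald*c}) only to the finite-dimensional blocks, deducing Lemma~\ref{gn1} from Lemma~\ref{kn1} via $c_k({\rm id},\ell_{p_1}^N,\ell_{p_2}^N)=d_k({\rm id},\ell_{p_2^\prime}^N,\ell_{p_1^\prime}^N)$, and then re-runs for $c_k$ the entire machinery of Proposition~\ref{an4}: the two-index partition of ${\rm id}$ into the projections $P_{j,i}$ (indexed by level $j$ \emph{and} dyadic distance $i$ to $U$, with $|I_{j,i}|\sim 2^{in}2^{(j-i)d}$ for $i<j$ and $\sim 2^{in}$ for $i\ge j$), the operator-ideal quasi-norms $L^{(c)}_{r,\infty}$ with the $\rho$-triangle inequality (\ref{idealsinq}), and the calibrated ranks $k_{j,i}=[k^{1-\varepsilon}2^{iz_1}2^{jz_2}]$. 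Your global duality $c_k({\rm id})=d_k({\rm id}^\ast)$ requires both spaces to be Banach and the adjoint embedding to be identified with a member of the same scale; this breaks down precisely at the endpoints that Theorem~\ref{T3} covers ($p_2=\infty$ or $q_2=\infty$, where the dual of the target is not the conjugate sequence space, and $0<p_i<1$ or $0<q_i<1$, where (\ref{duald*c}) is unavailable). You acknowledge the quasi-Banach range, but cases (i), (iii) and (v) of the theorem admit $0<p_1\le 1$ and case (iv) of Theorem~\ref{T2} would have to be dualized with $p_2=\infty$ allowed in Theorem~\ref{T3}(iv), so the portion of the theorem actually reachable by your duality step is smaller than you suggest; the paper instead feeds the Foucart--Pajor--Rauhut--Ullrich bounds (Lemmata~\ref{gnupp}, \ref{gnlow}) and Lemma~\ref{gn021ify} directly into the same decomposition.

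The second gap is in your fallback argument. Summing the block estimates ``via subadditivity (PS2) and the weight decay'' over a single scale parameter $j$ does not recover the sharp exponents: the critical cases (iii)--(vi) and the two-sided statements arise exactly because the boundary-layer and bulk contributions must be balanced across the \emph{two-parameter} family $(j,i)$, and a naive triangle-inequality summation loses the rate (this is the reason the operator-ideal technique of Remark~\ref{opt} is used at all --- one sums the quasi-norms $L^{(c)}_{s,\infty}(P_{j,i})^\rho$, not the Gelfand numbers themselves, and only afterwards extracts $c_k$). Your single-scale lower bound is essentially the paper's Step~5 and is fine, but without the $(j,i)$-decomposition and the ideal quasi-norm bookkeeping the upper bounds in (iv), (vi) and in the ``Besides'' statements are not established. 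The intuition that the restrictions there mark where one geometric regime dominates is correct, but verifying it is the content of Substeps 4.1--4.2 of Proposition~\ref{an4} (transplanted to $c_k$ in Propositions~\ref{gn4}--\ref{gn5}), which your sketch does not supply.
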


\begin{re}
Points (iv) and (vi), together with the latter statements (i) and
(ii), vanish if\ \ $0<p_1\le 1$.
\end{re}

\begin{re}
We shift the proofs of the above three theorems to Section
\ref{wmb}.
\end{re}

Now, we wish to compare the approximation, Gelfand and Kolmogorov
numbers of the embedding (\ref{BB}).  The comparison of these above
results shows that\vspace{-0.2cm}
\begin{enumerate}
\item[$(i)$]\ $a_n\sim c_n$ if either

$(a)$\ $2\le p_1< p_2\le \infty$\ or,

$(b)$\ $\tilde{p}<p_2\le p_1\le \infty$\ or,

$(c)$\ $1\le p_1 < p_1^\prime\le p_2\le\infty$\ and \
$\min\big(\frac\delta d,\frac {s^\prime}n\big)>\frac 1{p_1^\prime}$;
\vspace{-0.2cm}

\item[$(ii)$]\ $a_n\sim d_n$ if either

$(a)$\ $0< p_1< p_2\le 2$\ or,

$(b)$\ $\tilde{p}<p_2\le p_1\le \infty$\ or,

$(c)$\ $0< p_1 < 2 < p_2\le p_1^\prime\le\infty$\ and\
$\min\big(\frac\delta d,\frac {s^\prime}n\big)>\frac
1{p_2}$;\vspace{-0.2cm}

\item[$(iii)$]\ $c_n\sim d_n$ if either

$(a)$\ $\tilde{p}<p_2\le p_1\le \infty$\ or,

$(b)$\ $1\le p_1 < p_1^\prime= p_2\le\infty$\ and\
$\min\big(\frac\delta d,\frac {s^\prime}n\big)>\frac 1{p_2}$.
\end{enumerate}
Note that we don't discuss above the case when $0< p_1 < 2 < p_2\le
\infty$ and $\min\big(\frac\delta d,\frac {s^\prime}n\big)<\frac
1{\min(p_1^\prime,p_2)}$.

\section{Widths in sequence spaces}\label{snss}

This section is the heart of the paper. Our main aim will be to
determine the asymptotic behaviour of related widths of compact
embeddings between weighted sequence spaces
$\ell_q(2^{js}\ell_p(w))$, where the sequences are indexed by
$\mathbb{N}_0\times\mathbb{Z}^n$.

\subsection{Preliminaries}\label{prel}

Here we are going to use the discrete wavelet transform in order to
obtain equivalent quasi-norms in the spaces $
B_{p,q}^{s,s^\prime}(\mathbb{R}^n, U)$ which, in a quite natural
way, will establish isomorphism between $
B_{p,q}^{s,s^\prime}(\mathbb{R}^n, U)$ and appropriate sequence
spaces, cf. \cite{Ke08}. By now this is a standard method to reduce
complicated problems in function spaces to simpler problems in
sequence spaces. The key point in this discretization technique is
that the asymptotic order of the estimates is preserved.

Wavelet bases in function spaces are a well-developed concept, see
\cite{Tr08} for a survey. We adopt the notation from \cite{Tr06}
(Section 4.2.1) with $l = 0$. Let $\psi_M, \psi_F\in
C^k(\mathbb{R})$ be real compactly supported Daubechies wavelets
with
$$
\int_\mathbb{R}x^\beta \psi_M(x)dx=0\quad \quad {\rm for}\ \
|\beta|<k.
$$
Let $G^0=\{F,M\}^n$ and let $ G^j=\{F,M\}^{n^\ast}$ where $n^\ast$
indicates that at least one $G_i$ of $G=(G_1,\ldots,G_n)\in
\{F,M\}^{n^\ast}$ must be an $M$. It is clear that the cardinal
number of $\{F,M\}^{n^\ast}$ is $2^n-1$. Let for $x\in\mathbb{R}^n$
$$
\psi_{Gm}^j(x)=2^{j\frac n2}\prod\limits_{i=1}^n
\psi_{G_i}(2^jx_i-m_i)\ \ \ {\rm where}\ \ j\in\mathbb{N}_0,
m\in\mathbb{Z}^n\ {\rm and}\ G=(G_1,\ldots,G_n)\in G^j.
$$
Then $\{\psi_{Gm}^j: \ j\in\mathbb{N}_0,\ G\in G^j,\
m\in\mathbb{Z}^n\}$ is an orthonormal basis in $L_2(\mathbb{R}^n)$,
see  \cite{Ke08,Tr06}.

In our situation we shall consider the following sequence spaces,
see \cite{Ke08} (Section 5.3.4).
\begin{definition}
Let $w=(w_j)_{j\in\mathbb{N}_0}$ be as in (\ref{w_U}). We put
$$\tilde{b}_{p,q}^{s,s^\prime}(w)=\Big\{(\lambda_{Gm}^j)_{j,G,m}\,:\ \|\lambda\, |\,
\tilde{b}_{p,q}^{s,s^\prime}(w)\|<\infty\Big\},
$$
where
$$
\|\lambda\, |\, \tilde{b}_{p,q}^{s,s^\prime}(w)\|= \Bigg(
\sum\limits_{j=0}^\infty 2^{j(s-\frac np)q}\sum\limits_{G\in G^j}
\Big(\sum\limits_{m\in\mathbb{Z}^n}|\lambda_{Gm}^j|^pw_j^p(2^{-j}m)
\Big)^{q/p} \Bigg)^{1/q}.
$$
\end{definition}\label{bws}
The following proposition can also be found there, see \cite{Ke08}
(Corollary 5.33); cf. also \cite{Ke10,LS11}.
\begin{prop}\label{iso}
Let $U\in \mathbb{R}^n$ bounded  and $w=(w_j)_{j\in\mathbb{N}_0}$ as
in (\ref{w_U}). Further, let $f\in
B^{s,s^\prime}_{p,q}(\mathbb{R}^n,U)$, k large enough and
$$\lambda_{Gm}^j=\lambda_{Gm}^j(f)=2^{j\frac n2}\langle f,\psi_{Gm}^j\rangle
=2^{j\frac n2}\int f(x)\psi_{Gm}^jdx.$$ Then
$$I:\ f\mapsto 2^{j\frac n2}\langle f,\psi_{Gm}^j\rangle
$$
is an isomorphic map from $B^{s,s^\prime}_{p,q}(\mathbb{R}^n,U)$
onto $\tilde{b}_{p,q}^{s,s^\prime}(w)$.
\end{prop}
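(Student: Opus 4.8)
The plan is to recognize that $w=(w_j)_{j\in\mathbb{N}_0}$ defined in (\ref{w_U}) is a particular instance of an \emph{admissible weight sequence} in the sense of Kempka \cite{Ke08,Ke10}, so that the assertion becomes a direct application of the general wavelet characterization established there (Corollary 5.33). Concretely, I would first verify the two structural conditions that define admissibility: (a) there are constants $0<d_1\le d_2$ with $d_1 w_j(x)\le w_{j+1}(x)\le d_2 w_j(x)$ for all $j\in\mathbb{N}_0$ and $x\in\mathbb{R}^n$; and (b) there are constants $C>0$ and $\alpha\ge 0$ with $w_j(x)\le C\,w_j(y)\,(1+2^j|x-y|)^\alpha$ for all $j,x,y$. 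Once (a) and (b) hold, the abstract theorem supplies the isomorphism $I$ together with the two-sided norm equivalence, provided the Daubechies wavelets are chosen smooth enough and with sufficiently many vanishing moments, which is exactly the meaning of the hypothesis ``$k$ large enough''.

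The verification of (a) and (b) is where the specific form of $w_j$ enters, and it rests entirely on the $1$-Lipschitz property $|{\rm dist}(x,U)-{\rm dist}(y,U)|\le|x-y|$. For (b), this gives $1+2^j{\rm dist}(x,U)\le (1+2^j{\rm dist}(y,U))(1+2^j|x-y|)$, whence raising to the power $s^\prime$ (and interchanging the roles of $x,y$ when $s^\prime<0$) yields (b) with $\alpha=|s^\prime|$. For (a), one notes that $1\le \frac{1+2^{j+1}t}{1+2^jt}\le 2$ for every $t\ge 0$, so that the ratio $w_{j+1}(x)/w_j(x)$ is pinched between $\min(1,2^{s^\prime})$ and $\max(1,2^{s^\prime})$. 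A consequence of (b) that I would isolate for later use is the \emph{local constancy} of the weight at scale $2^{-j}$: since $\psi_{Gm}^j$ is supported in a ball of radius $\sim 2^{-j}$ about $2^{-j}m$, one has $2^j|x-2^{-j}m|\le C$ on that support, and therefore $w_j(x)\sim w_j(2^{-j}m)$ uniformly there. This is precisely what legitimizes replacing the continuous weight in $\|\cdot\,|\,B_{p,q}^{s,s^\prime}(\mathbb{R}^n,U)\|$ by the sampled weight $w_j(2^{-j}m)$ in $\|\cdot\,|\,\tilde b_{p,q}^{s,s^\prime}(w)\|$.

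Spelling out the two directions behind the general theorem, the \textbf{analysis} estimate $\|\lambda(f)\,|\,\tilde b_{p,q}^{s,s^\prime}(w)\|\preceq \|f\,|\,B_{p,q}^{s,s^\prime}(\mathbb{R}^n,U)\|$ is obtained by writing $2^{jn/2}\langle f,\psi_{Gm}^j\rangle$ as a local mean and using the vanishing moments of $\psi_M$ together with the smoothness and compact support of the wavelets to produce an almost-diagonal bound (rapidly decaying in $|j-\nu|$ and in the spatial offset) against the building blocks $\varphi_\nu(D)f$; the weight is carried through by (b). The \textbf{synthesis} estimate goes the other way: given $\lambda\in\tilde b_{p,q}^{s,s^\prime}(w)$ one sets $f=\sum_{j,G,m}2^{-jn/2}\lambda_{Gm}^j\psi_{Gm}^j$, checks convergence in $\mathcal{S}^\prime(\mathbb{R}^n)$, and estimates $\|f\,|\,B_{p,q}^{s,s^\prime}(\mathbb{R}^n,U)\|$ by the same almost-diagonal machinery. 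I expect the main obstacle to be the bookkeeping in the quasi-Banach range $p,q<1$, where one must absorb the extra factor $\sigma_p=n(1/p-1)_+$ into the required order $k$ of smoothness and vanishing moments, and couple it with the weight exponent $\alpha=|s^\prime|$ so that the almost-diagonal sums converge; controlling the weights uniformly across dyadic levels via (a)--(b), rather than the estimate for a single fixed weight, is the one genuinely new ingredient relative to the classical unweighted case.
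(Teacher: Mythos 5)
Your proposal is correct and follows essentially the same route as the paper: the paper offers no proof of its own but simply cites Kempka's general wavelet characterization of $2$-microlocal Besov spaces with admissible weight sequences (\cite{Ke08}, Corollary 5.33), which is exactly the result you reduce to after verifying that $w_j(x)=(1+2^j\,{\rm dist}(x,U))^{s^\prime}$ is admissible via the $1$-Lipschitz property of ${\rm dist}(\cdot,U)$. Your verification of conditions (a) and (b) and the local-constancy observation are correct and in fact supply the detail the paper leaves implicit.
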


Inspired by Proposition \ref{iso}, we shall work with the following
weighted sequence spaces. Let $(w_j)_{j\in \mathbb{N}_0}$ be a given
weight sequence as in (\ref{w_U}). We put
\begin{equation}\label{ellwU}
\begin{split}
\ell_q(2^{js}\ell_p(w)):=\Bigg\{&
\lambda=(\lambda_{j,m})_{j,m}:~~\lambda_{j,m}\in\mathbb{C},\\
&\|\lambda|\ell_q(2^{js}\ell_p(w))\|= \Big( \sum\limits_{j=0}^\infty
2^{jsq}
\Big(\sum\limits_{m\in{\mathbb{Z}}^n}|\lambda_{j,m}\,w_j(2^{-j}m)|^p
\Big)^{q/p}\Big)^{1/q}<\infty \Bigg\},
\end{split}
\end{equation}
(usual modification if $p=\infty$ and/or $q=\infty$). If $s=0$ we
will write $\ell_q(\ell_p(w))$. In contrast to the quasi-norm
defined in Definition \ref{bws}, the finite summation on $G\in G^j$
is irrelevant and can be omitted. Similar considerations may be
found in \cite{HS11,KLSS06b,LS11,Sk05}. Furthermore, let
\begin{equation*}
\begin{array}{ll}
A_1:=\ell_{q_1}(2^{j(s_1-\frac n{p_1})}\ell_{p_1}(v_1)),\quad\quad &
A_2:=\ell_{q_2}(2^{j(s_2-\frac n{p_2})}\ell_{p_2}(v_2)),\vspace{0.2cm}\\
B_1:=\ell_{q_1}(2^{j\delta}\ell_{p_1}(w)),\quad\quad &
B_2:=\ell_{q_2}(\ell_{p_2}),
\end{array}
\end{equation*}
where $$ v_1=(w_{1,j})_{j\in\mathbb{N}_0},\
v_2=(w_{2,j})_{j\in\mathbb{N}_0}\ \ {\rm and}\ \
w=(w_j)_{j\in\mathbb{N}_0}\ \ {\rm with}\ \
w_j(x)=\frac{w_{1,j}(x)}{w_{2,j}(x)},\ \ {\rm for}\ \
x\in\mathbb{Z}^n.$$ As is already discussed in \cite{LS11}, we
observe by the properties of $s$-numbers that
\begin{equation}\label{AABB}
s_k({\rm id}, A_1, A_2)=s_k({\rm id}, B_1, B_2),\quad \quad
k=1,2,\ldots,
\end{equation}
where $s_k$ denotes any of the three quantities\ $a_k,\, d_k$ or
$c_k$.  Hence, we may concentrate on $s_k({\rm id}, B_1, B_2)$.

We shall consider compact subsets of $\mathbb{R}^n$ for $U$, in
order to guarantee the compactness of the embedding
$B_1\hookrightarrow B_2$. We are now able to recall the definition
of d-sets, which are fractal sets in between single point sets
$\{x_0\}$ and compact sets with non-empty interior, cf. \cite{Tr98}.

Let $U$ be a compact set and $\mu$ a Radon measure with ${\rm supp}
\mu = U$. The set $U$ is called a $d$-set, $0 \le d \le n$, if for
each ball of radius $r$ and centered in $\gamma\in U$ holds
$$
\mu(B(\gamma,r))\sim r^d,\ \ {\rm with}\ \ 0<r<1.
$$

Let us mention the estimation of a number of dyadic cubes of a fixed
side length that are in a predetermined distance to the set $U$.

 For $i,\, j\in\mathbb{N}_0$, we denote by $N_{j,i}$ the number of cubes
$Q_{j,\ell}$ of side length $2^{-j}$, centered in $2^{-j}\ell$ with
\begin{equation}
\sqrt{n}2^{-j+i}<{\rm dist} (Q_{j,\ell}, U)\le 4\sqrt{n}2^{-j+i}, \
\ \ell\in\mathbb{Z}^n.
\end{equation}

The following lemma may be found in \cite{LS11}.
\begin{lemma}\label{Nji}
Let $U$ be a $d$-set, then
$$
N_{j,i}\sim
\begin{cases}
2^{in}2^{(j-i)d}\ \ \ & 0\le i<j,
\\
2^{in}&j\le i.
\end{cases}
$$
\end{lemma}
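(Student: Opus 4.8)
The plan is to convert the combinatorial count $N_{j,i}$ into a Lebesgue-volume estimate for an annular neighbourhood of $U$ and then read off the order from the $d$-set axiom. Write $\rho=2^{-j+i}$ for the relevant distance scale; since $i\ge 0$ the cube side $2^{-j}$ satisfies $2^{-j}\le\rho$, so the diameter $\sqrt n\,2^{-j}$ of each cube is at most $\sqrt n\,\rho$. Because the cubes $Q_{j,\ell}$ tile $\mathbb{R}^n$ with pairwise disjoint interiors and common volume $2^{-jn}$, and because the annular shell $A_{j,i}:=\{x:\sqrt n\,\rho<\mathrm{dist}(x,U)\le 4\sqrt n\,\rho\}$ is thick compared with a single cube, I would first show
\[
N_{j,i}\cdot 2^{-jn}\sim \mathrm{vol}\,(A_{j,i}),
\]
where the two inclusions ``every counted cube lies in a slightly fattened shell'' and ``the shell is covered by counted cubes up to a shrink by one cube layer'' cost only a harmless dilation of the radii by $O(2^{-j})\le O(\rho)$. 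This reduces the lemma to estimating $\mathrm{vol}\,(A_{j,i})$.

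For the regime $0\le i<j$ we have $\rho<1$, so the $d$-set axiom $\mu(B(\gamma,r))\sim r^d$ is available at scale $\rho$. The upper bound is the straightforward half: a maximal $\rho$-separated family $\{\gamma_k\}\subset U$ has cardinality $\sim\rho^{-d}$ (the balls $B(\gamma_k,\rho/2)$ are disjoint, each of $\mu$-measure $\sim\rho^d$, while maximality gives $U\subset\bigcup_k B(\gamma_k,\rho)$, so $\mu(U)\sim 1$ pins the count), hence $U$ is covered by $\sim\rho^{-d}$ balls of radius $\rho$; fattening each to radius $(4\sqrt n+1)\rho$ covers $A_{j,i}$ and yields $\mathrm{vol}\,(A_{j,i})\lesssim\rho^{-d}\rho^{n}=\rho^{n-d}$. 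Substituting $\rho=2^{-j+i}$ gives the claimed $2^{in}2^{(j-i)d}$ via the identity $(i-j)(n-d)+jn=in+(j-i)d$. For the regime $j\le i$ we instead have $\rho\ge 1$, so $U$ (of diameter $\lesssim 1\ll\rho$) is seen from distance $\rho$ as essentially a point: $A_{j,i}$ is then comparable to a Euclidean spherical shell of radius $\sim\rho$ and thickness $\sim\rho$, whence $\mathrm{vol}\,(A_{j,i})\sim\rho^{n}$ and $N_{j,i}\sim\rho^{n}2^{jn}=2^{in}$.

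I expect the genuine difficulty to be the lower bound on $\mathrm{vol}\,(A_{j,i})$ in the near regime $0\le i<j$: a priori the difference $\mathrm{vol}\,(U_{4\sqrt n\rho})-\mathrm{vol}\,(U_{\sqrt n\rho})$ of two quantities each of order $\rho^{n-d}$ could be of smaller order, so one must show that the shell itself, not merely the full neighbourhood, carries volume $\sim\rho^{n-d}$. The plan here is to exploit the porosity of a $d$-set: around each point $\gamma_k$ of the separated family a fixed fraction of the ball $B(\gamma_k,4\sqrt n\,\rho)$ lies outside $U_{\sqrt n\rho}$, so each $\gamma_k$ contributes a chunk of volume $\gtrsim\rho^{n}$ to $A_{j,i}$, and these chunks have bounded overlap; summing over the $\sim\rho^{-d}$ points gives $\mathrm{vol}\,(A_{j,i})\gtrsim\rho^{n-d}$ and closes the estimate. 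The upper bound, the far regime, and the passage back from volumes to the cube count $N_{j,i}$ are then routine, so the porosity step is the crux.
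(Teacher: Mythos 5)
The paper offers no proof of this lemma at all (it is quoted from \cite{LS11}), so your argument has to stand on its own. Your reduction of the cube count to the volume of the shell $A_{j,i}$, your covering argument for the upper bound, and your treatment of the far regime $j\le i$ are all sound, and you have correctly isolated the one genuine difficulty: the lower bound for $0\le i<j$. But the porosity step you propose there does not work as stated. Writing $U_t:=\{x:\mathrm{dist}(x,U)\le t\}$, porosity of a $d$-set with $d<n$ only supplies a constant $\eta=\eta(U)\in(0,1)$ and, inside $B(\gamma_k,4\sqrt n\,\rho)$, a ball of radius $4\sqrt n\,\eta\rho$ disjoint from $U$; its points are at distance roughly $\eta\rho$ from $U$, not at distance $>\sqrt n\,\rho$. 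Since $\eta$ degenerates as $d\uparrow n$, nothing forces $\eta\ge 1/4$, and your claim that ``a fixed fraction of $B(\gamma_k,4\sqrt n\,\rho)$ lies outside $U_{\sqrt n\rho}$'' is simply false in general: for the middle-$\alpha$ Cantor set in $\mathbb{R}^1$ with $\alpha<1/4$ (a $d$-set with $d>0.7$), every gap of $U$ inside $B(\gamma,4\rho)$ has length $<2\rho$ for all non-extremal $\gamma\in U$, so $B(\gamma,4\rho)\subset U_{\rho}$ and the proposed chunk does not exist, even though the lemma itself remains true (the mass of the shell lives near the \emph{large} gaps, which are at distance $\gg\rho$ from most points of $U$). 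That some quantitative use of $d<n$ is unavoidable is confirmed by the endpoint $d=n$: for $U=[0,1]^n$ one computes $N_{j,i}\sim 2^{jn}2^{i-j}$, not $2^{jn}$, so the asserted lower bound actually fails there and no proof can be uniform in $d\le n$.

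The step can be repaired, but only by hunting for the hole at a \emph{large} multiple of $\rho$ rather than at $4\sqrt n\,\rho$. Fix $R\ge1$ and let $\{\gamma_k\}$ be a maximal $3R\rho$-separated subset of $U$, still of cardinality $\gtrsim\rho^{-d}$ up to an $R$-dependent constant. Covering $U\cap B(\gamma_k,(R+3\sqrt n)\rho)$ by $\lesssim R^{d}$ balls of radius $3\sqrt n\,\rho$ centred in $U$ (the $d$-set axiom applied once to the big ball and once to the small ones) gives $\mathrm{vol}\bigl(U_{3\sqrt n\rho}\cap B(\gamma_k,R\rho)\bigr)\le C\,R^{d}\rho^{n}$, whereas $\mathrm{vol}\bigl(B(\gamma_k,R\rho)\bigr)=c_nR^{n}\rho^{n}$; since $d<n$, choosing $R$ large in dependence on $n$, $d$ and the constants in $\mu(B(\gamma,r))\sim r^{d}$ produces a point $z_k\in B(\gamma_k,R\rho)$ with $\mathrm{dist}(z_k,U)>3\sqrt n\,\rho$. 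The $1$-Lipschitz function $x\mapsto\mathrm{dist}(x,U)$ then takes the value $3\sqrt n\,\rho$ at some $w_k$ on the segment $[\gamma_k,z_k]$, and $B(w_k,\tfrac12\sqrt n\,\rho)$ lies in the sub-shell $\{\tfrac52\sqrt n\,\rho\le\mathrm{dist}(\cdot,U)\le\tfrac72\sqrt n\,\rho\}$; because each dyadic cube has diameter $\sqrt n\,2^{-j}\le\sqrt n\,\rho$, every cube meeting this sub-shell satisfies $\sqrt n\,\rho<\mathrm{dist}(Q_{j,\ell},U)\le4\sqrt n\,\rho$ and is counted by $N_{j,i}$. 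The balls $B(w_k,\tfrac12\sqrt n\,\rho)\subset B(\gamma_k,(R+1)\rho)$ are pairwise disjoint by the $3R\rho$-separation, so summing over $k$ gives $N_{j,i}2^{-jn}\gtrsim\rho^{-d}\rho^{n}$, which is the claimed lower bound. With this substitution your outline closes; as written, the crucial step is unjustified.
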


Following Pietsch \cite{Pie87}, we associate to the sequence of the
$s$-numbers the following operator ideals, and for $0<r<\infty$, we
put
\begin{equation}\mathscr{L}_{r,\infty}^{(s)}:=\left\{T\in\mathcal{L}(X,
Y):\quad \sup\limits_{n\in\mathbb{N}}n^{1/r}s_n(T)<\infty\right\}.
\end{equation}
 Equipped with the quasi-norm
\begin{equation}\label{idealsdef}
L_{r,\infty}^{(s)}(T):=\sup\limits_{n\in\mathbb{N}}n^{1/r}s_n(T),
\end{equation}
the set $\mathscr{L}_{r,\infty}^{(s)}$ becomes a quasi-Banach space.
For such quasi-Banach spaces there always exists a real number
$0<\rho\leq 1$ such that
\begin{equation}\label{idealsinq}
L_{r,\infty}^{(s)}\left(\sum\limits_jT_j\right)^\rho\leq
\sum\limits_jL_{r,\infty}^{(s)}(T_j)^\rho
\end{equation}
holds for any sequence of operators
$T_j\in\mathscr{L}_{r,\infty}^{(s)}.$ Then we shall use the
quasi-norms $L_{r,\infty}^{(a)},~L_{r,\infty}^{(c)}$ and
$L_{r,\infty}^{(d)}$ for the approximation, Gelfand and Kolmogorov
numbers, respectively.

\begin{re}\label{opt}
We would like to add some comments on operator ideals. Historically,
the technique of estimating single s-numbers (or entropy numbers)
via estimates of ideal (quasi-)norms derives from ideas of Carl
\cite{Car81}. In the 1980s this technique was frequently used in
operator theory, in eigenvalue problems for Banach space operators,
etc. In the function spaces community however, the operator ideal
technique remained unknown for many years. As far as we know, it was
applied for the first time in \cite{Ku03,KLSS03a}, both of which
appeared in 2003.
\end{re}

For brevity's sake, we wish to make an additional agreement
throughout the following three subsections. Let $U$ be a $d$-set, $0
\le d \le n$, and $w_{j,\ell}=w_j(2^{-j}\ell)=(1+2^j\,{\rm
dist}(2^{-j}\ell,U))^{s^\prime}$ a sequence of weights,
$j\in\mathbb{N}_0,\, \ell\in\mathbb{Z}^n$,
$s^\prime=s_1^\prime-s_2^\prime>0$, if no further restrcitions are
stated.

\subsection{Approximation numbers of sequence spaces}\label{ans}
To begin with, we shall recall some lemmata. Lemma \ref{an1} follows
trivially from results of Gluskin \cite{Gl83} and Edmunds and
Triebel \cite{ET96}. Lemma \ref{an1inf} is due to Vyb\'iral
\cite{Vy08}.
\begin{lemma}\label{an1}
 Let $N\in\mathbb{N}$ and $k\le\frac{N}{4}$.
\vspace{-0.2cm} \beg{enumerate}
\item[$(i)$] If $0< p_1\le p_2\le 2$~or~$2\le p_1\le p_2\le \infty$~ then
$$a_{k}\big({\rm id}, \ell_{p_1}^N, \ell_{p_2}^N\big)\sim 1.$$
\vspace{-0.8cm}
\item[$(ii)$] If $1\le p_1 < 2 < p_2\le \infty$ and $(p_1, p_2) \neq (1,
\infty)$\, then
$$a_{k}\big({\rm id}, \ell_{p_1}^N, \ell_{p_2}^N\big)\sim \min\big(1, N^{1/t}k^{-1/2}\big).$$
where $\frac 1t= \frac 1{\min(p_1^\prime, p_2)}.$ \vspace{-0.2cm}

\item[$(iii)$] If
$0<  p_1< 1$ and $ 2 < p_2< \infty$\, then
$$a_{k}\big({\rm id}, \ell_{p_1}^N, \ell_{p_2}^N\big)\sim
\min\big(1, N^{1/{p_2}}k^{-1/2}\big).$$ \end{enumerate}
\end{lemma}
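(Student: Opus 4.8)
This is a specialization of the sharp finite-dimensional estimates of Gluskin \cite{Gl83} (compiled in \cite{ET96}), so the plan is to separate the trivial upper bound, the remaining upper bounds, and the lower bounds, reducing the first two to norm computations and to the diagonal maps ${\rm id}:\ell_r^N\to\ell_{r'}^N$ with $r\le 2\le r'$. The trivial part first: in every case $p_1\le p_2$, and since $\ell_p$-quasinorms decrease in $p$, we have $\|{\rm id}:\ell_{p_1}^N\to\ell_{p_2}^N\|=1$. Hence by monotonicity (PS1), $a_k\le a_1=\|{\rm id}\|=1$ for all $k$. This already gives the upper estimate of case (i) and produces the factor $\min(1,\cdot)$ of cases (ii) and (iii).

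For the remaining upper bounds $a_k\preceq N^{1/t}k^{-1/2}$ when $p_1<2<p_2$, I would route the identity through a diagonal map and invoke multiplicativity (PS3). If $t=p_2$ (i.e. $p_2\le p_1^\prime$, equivalently $p_1\le p_2^\prime$), factor ${\rm id}:\ell_{p_1}^N\to\ell_{p_2^\prime}^N\to\ell_{p_2}^N$; the first arrow has norm $1$, so (PS3) yields $a_k\preceq a_k({\rm id}:\ell_{p_2^\prime}^N\to\ell_{p_2}^N)$. If $t=p_1^\prime$ (i.e. $p_1^\prime\le p_2$), factor ${\rm id}:\ell_{p_1}^N\to\ell_{p_1^\prime}^N\to\ell_{p_2}^N$; now the \emph{second} arrow has norm $1$, so $a_k\preceq a_k({\rm id}:\ell_{p_1}^N\to\ell_{p_1^\prime}^N)$. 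In both cases the problem collapses to a diagonal map $\ell_r^N\to\ell_{r^\prime}^N$ ($r=p_2^\prime$ or $r=p_1$), for which Gluskin's estimate gives $a_k\sim\min(1,N^{1/r^\prime}k^{-1/2})=\min(1,N^{1/t}k^{-1/2})$. Part (iii) is the same: there $p_1^\prime=\infty$ forces $t=p_2$, and the routing through $\ell_{p_2^\prime}^N$ only uses $\|{\rm id}:\ell_{p_1}^N\to\ell_{p_2^\prime}^N\|=1$, which remains valid in the quasi-Banach range $0<p_1<1$; the resulting anchor $\ell_{p_2^\prime}^N\to\ell_{p_2}^N$ sits in the Banach range $1<p_2^\prime\le 2\le p_2<\infty$.

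The genuinely hard part, and the main obstacle, is the matching lower bounds, including the assertion $a_k\succeq 1$ of case (i). These cannot be obtained from the soft factorization above, and the failure is structural, not incidental: routing the tight diagonal estimate \emph{back} through ${\rm id}:\ell_{p_1}^N\to\ell_{p_2}^N$ always forces a factor $\|{\rm id}:\ell_{p_2^\prime}^N\to\ell_{p_1}^N\|=N^{1/p_1-1/p_2^\prime}$ (or its analogue), which destroys the exponent. Likewise the elementary kernel bound $\|{\rm id}-A\|\ge\sup_{x\in\ker A}\|x\|_{p_2}/\|x\|_{p_1}$, available because ${\rm rank}\,A<k\le N/4$ gives $\dim\ker A\ge 3N/4$, is far too weak: one may choose $A$ with $\ker A$ an almost-Euclidean section (Kashin/Dvoretzky type) of both $\ell_{p_1}^N$ and $\ell_{p_2}^N$, on which the supremum is only of order $N^{-c}$ for some $c>0$. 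Consequently the lower bounds must be quoted from the volumetric / Gaussian-random-operator arguments of Gluskin \cite{Gl83} (see also \cite{ET96}), which I would simply specialize to the stated regimes.

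The last step is bookkeeping: verifying the range $k\le N/4$ is consistent, since for $p_1<2<p_2$ one has $t=\min(p_1^\prime,p_2)>2$, so the transition $N^{1/t}k^{-1/2}=1$ occurs at $k\sim N^{2/t}$ with $2/t<1$, safely inside the admissible window. Finally, the excluded endpoint $(p_1,p_2)=(1,\infty)$ is precisely where this clean power law fails—logarithmic factors appear at the $\ell_\infty$-target—which is why it, together with the case $p_2=\infty$ and $p_1\le 1$, is set aside.
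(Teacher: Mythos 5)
Your proposal is correct and takes essentially the same route as the paper, which offers no proof of this lemma at all but simply asserts that it ``follows trivially from results of Gluskin \cite{Gl83} and Edmunds and Triebel \cite{ET96}''---precisely the reduction-plus-citation you carry out. Your explicit norm-one factorizations through $\ell_{p_2^\prime}^N$ or $\ell_{p_1^\prime}^N$ for the upper bounds, and your identification of the lower bounds (and the sharp diagonal estimates) as the content that must be imported from \cite{Gl83} and \cite{ET96}, are both accurate and consistent with how the paper uses the result.
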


\begin{lemma}\label{an1inf}
Let $0 < p\le 1$ and $N\in\mathbb{N}$. \vspace{-0.2cm}
\beg{enumerate}
\item[$(i)$] Let $0<\lambda<1$. Then there exists a constant
$c_\lambda>0$ depending only on $\lambda$ such that
\begin{equation}\label{an1infupp}
a_{k}\big({\rm id}, \ell_{p}^N, \ell_\infty^N\big)\le
\begin{cases}
1 & {\rm if}\ \ k\leq N^\lambda,\\
c_\lambda k^{-1/2}\quad & {\rm if}\ \ N^\lambda<k\le N,\\
0 & {\rm if}\ \ k>N. \end{cases}
\end{equation}
\vspace{-0.6cm}
\item[$(ii)$] There exists a constant
$C>0$ independent of $k$ such that for any  $k\in \mathbb{N}$
\begin{equation}\label{an1inflow}
a_{k}\big({\rm id}, \ell_{p}^{2k}, \ell_\infty^{2k}\big)\ge C
n^{-1/2}.
\end{equation}
\end{enumerate}

\end{lemma}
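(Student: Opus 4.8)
The plan is to reduce both parts to a single matricial statement. The first step is to observe that for $0<p\le 1$ the operator norm of a matrix $B=(B_{ij})_{i,j=1}^N$ acting from $\ell_p^N$ to $\ell_\infty^N$ is simply its largest entry in modulus. Indeed $\|Bx\|_\infty=\max_i|\sum_j B_{ij}x_j|$, and for $0<p\le 1$ every linear functional attains its maximum over the quasi-ball $\{\|x\|_p\le 1\}$ at one of the extreme points $\pm e_j$ (spreading the mass only increases $\|x\|_p$), whence $\sup_{\|x\|_p\le 1}|\sum_j B_{ij}x_j|=\max_j|B_{ij}|$ and
\[
\|B:\ell_p^N\to\ell_\infty^N\|=\max_{i,j}|B_{ij}|=:\|B\|_{\max}.
\]
By the very definition of the approximation numbers this turns the problem into one of approximating the identity matrix entrywise by a matrix of low rank,
\[
a_k(\mathrm{id},\ell_p^N,\ell_\infty^N)=\inf\big\{\|I_N-A\|_{\max}:\ \mathrm{rank}(A)<k\big\},
\]
an expression that no longer depends on the particular value $p\in(0,1]$. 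This also explains why the excluded endpoint $(1,\infty)$ of Lemma \ref{an1} must be treated separately here.

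The two extreme ranges of (i) are then immediate: monotonicity \textbf{(PS1)} gives $a_k\le a_1=\|I_N\|_{\max}=1$ for $k\le N^\lambda$, while for $k>N$ the admissible choice $A=I_N$, of rank $N<k$, yields $a_k=0$, as also dictated by \textbf{(PS4)}. The heart of the upper bound is the range $N^\lambda<k\le N$, and here I would use an \emph{algebraic} rather than a random approximant, since a Johnson--Lindenstrauss/random choice loses a spurious factor $\sqrt{\log N}$. Choose a prime power $q$ with $\tfrac12\sqrt k\le q<\sqrt k$ and set $d=q^2<k$, indexing the coordinate space $\mathbb{R}^d$ by $\mathbb{F}_q\times\mathbb{F}_q$. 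To each polynomial $f\in\mathbb{F}_q[x]$ of degree $<t$ associate the unit vector $v_f=q^{-1/2}\,\mathbf 1_{\{(x,f(x)):\,x\in\mathbb{F}_q\}}\in\mathbb{R}^d$. Two distinct polynomials of degree $<t$ agree in at most $t-1$ points, so $\langle v_f,v_g\rangle\le (t-1)/q$ for $f\ne g$, while $\langle v_f,v_f\rangle=1$. There are $q^t$ such polynomials, and since $k>N^\lambda$ forces $\log N/\log k<1/\lambda$, a degree bound $t=t(\lambda)$ depending only on $\lambda$ already gives $q^t\ge N$; selecting $N$ of the $v_f$ and forming the $N\times N$ Gram matrix $A=(\langle v_f,v_g\rangle)$ produces a matrix with $\mathrm{rank}(A)\le d<k$ and $\|I_N-A\|_{\max}\le (t-1)/q\le c_\lambda k^{-1/2}$, which is exactly the claimed estimate (with $c_\lambda$ growing as $\lambda\downarrow 0$).

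For the lower bound (ii) I would argue by a trace/Frobenius inequality. Take $N=2k$, let $A$ be arbitrary with $\mathrm{rank}(A)<k$, and put $\epsilon=\|I_N-A\|_{\max}$. Recall the elementary rank bound $\mathrm{rank}(A)\ge(\mathrm{tr}\,A)^2/\|A\|_F^2$, immediate from the singular value decomposition and Cauchy--Schwarz. Since $A_{ii}\ge 1-\epsilon$ and $|A_{ij}|\le\epsilon$ for $i\ne j$, we get $\mathrm{tr}\,A\ge 2k(1-\epsilon)$ and $\|A\|_F^2\le 2k(1+\epsilon)^2+2k(2k-1)\epsilon^2$; feeding these into $\mathrm{rank}(A)<k$ and simplifying gives $(2k-1)\epsilon^2>1-6\epsilon+\epsilon^2$. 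Either $\epsilon$ is already bounded below by an absolute constant, or $1-6\epsilon+\epsilon^2\ge\tfrac12$, and in both cases $\epsilon\ge C k^{-1/2}$, which is the assertion (so the ``$n^{-1/2}$'' in the statement is read as $k^{-1/2}$).

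The main obstacle is the upper bound in the middle range. A probabilistic construction only delivers the order $\sqrt{(\log N)/k}$, which is too large by $\sqrt{\log N}$, so the clean order $k^{-1/2}$ genuinely depends on the algebraic code above; the crucial point is that the hypothesis $k>N^\lambda$ keeps the polynomial degree $t$, and hence the worst off-diagonal inner product $(t-1)/q$, bounded in terms of $\lambda$ alone, which is precisely where the dependence of $c_\lambda$ on $\lambda$ is forced. By comparison the lower bound is soft and self-contained, requiring only the trace/Frobenius rank inequality.
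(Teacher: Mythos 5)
Your argument is correct, but there is nothing in the paper to compare it against: the authors give no proof of this lemma at all, simply attributing it to Vyb\'iral \cite{Vy08}. Your reduction $\|B:\ell_p^N\to\ell_\infty^N\|=\max_{i,j}|B_{ij}|$ for $0<p\le 1$ (via $\|x\|_1\le\|x\|_p$) is valid and correctly turns $a_k$ into entrywise low-rank approximation of $I_N$, independent of $p$; the Reed--Solomon Gram-matrix construction then delivers the upper bound $c_\lambda k^{-1/2}$ with the $\lambda$-dependence entering exactly where you say (through the degree bound $t\sim 1/\lambda$ needed to ensure $q^t\ge N$), and this is the right way to avoid the $\sqrt{\log N}$ loss of a random construction. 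The lower bound via $\mathrm{rank}(A)\ge(\mathrm{tr}\,A)^2/\|A\|_F^2$ is the standard argument and is carried out correctly; you are also right that the ``$n^{-1/2}$'' in the statement is a typo for $k^{-1/2}$. Only trivial loose ends remain: you need $q^2$ strictly less than $k$ (take the prime power in, say, $[\tfrac14\sqrt k,\tfrac12\sqrt k]$ to be safe), the finitely many $N$ below the threshold $N_0(\lambda)$ where $q^t\ge N$ might fail must be absorbed into $c_\lambda$ using $a_k\le 1$, and in the lower bound you should dispose first of the case $\epsilon\ge\tfrac1{12}$ so that $\mathrm{tr}\,A>0$ and the squaring step is legitimate --- none of which affects the substance.
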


Lemma \ref{an2} in the case $1\le p_2 < p_1\le \infty$ may be found
in Pietsch \cite{Pie78}, Section 11.11.5, also in Pinkus
\cite{Pin85}(p.\,203). The proof may be directly generalized to the
quasi-Banach setting $0< p_2 < p_1\le \infty.$

\begin{lemma}\label{an2}
Let $0<  p_2 < p_1\leq \infty$ and $k\le N$. Then
$$a_{k}\left({\rm id}, \ell_{p_1}^N, \ell_{p_2}^N\right)
=(N - k + 1)^{1/{p_2}-1/{p_1}}.$$
\end{lemma}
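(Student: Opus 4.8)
The plan is to reduce the general quasi-Banach case $0<p_2<p_1\le\infty$ to the known Banach-space identity for $1\le p_2<p_1\le\infty$ by exploiting the structure of the extremal operator, rather than re-deriving everything from scratch. First I would establish the upper bound $a_k\big(\mathrm{id},\ell_{p_1}^N,\ell_{p_2}^N\big)\le (N-k+1)^{1/p_2-1/p_1}$. The natural candidate for the approximating operator $A$ of rank $<k$ is the coordinate projection onto the first $k-1$ coordinates, so that $\mathrm{id}-A$ acts only on the remaining $N-k+1$ coordinates. The task then becomes to compute $\|\mathrm{id}:\ell_{p_1}^M\to\ell_{p_2}^M\|$ for $M=N-k+1$, which by the elementary H\"older-type inequality equals $M^{1/p_2-1/p_1}$ precisely because $p_2<p_1$; the worst vector is the normalized all-ones vector, and this computation is valid for all $0<p_2<p_1\le\infty$ since it only uses the embedding inequality between $\ell_{p_1}$ and $\ell_{p_2}$ on a finite-dimensional space.

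For the matching lower bound $a_k\ge (N-k+1)^{1/p_2-1/p_1}$, I would follow the Banach-space argument in Pietsch and Pinkus and check that each step survives the passage to $0<p_2<1$. The standard approach is to bound $a_k$ from below by testing against a well-chosen finite family of vectors, or equivalently to use the relation between approximation numbers and the behaviour of the operator on the extremal subspace; one shows that any rank-$(k-1)$ operator must leave a large residual on some $(N-k+1)$-dimensional coordinate subspace, and on that subspace the norm of the identity is again $(N-k+1)^{1/p_2-1/p_1}$. The key point is that the combinatorial/dimension-counting part of the argument (the range of $A$ can miss at most $k-1$ dimensions) is purely linear-algebraic and is insensitive to whether $p_2\ge 1$ or $p_2<1$.

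The main obstacle I anticipate is that several tools in the classical Banach-space proof — in particular duality arguments and the triangle inequality for the norm — are not directly available when $0<p_2<1$, where $\ell_{p_2}^N$ is only a quasi-Banach space. I would therefore need to verify that the lower-bound argument can be phrased entirely in terms of the quasi-norm's homogeneity and the monotonicity property {\rm\bf(PS1)}, avoiding any appeal to the reverse triangle inequality or to reflexivity. Concretely, the delicate step is to confirm that the extremal-vector or Bernstein-type argument producing the lower bound uses only $p$-homogeneity and the finite-dimensional embedding constant, both of which hold in the quasi-Banach range; once this is checked, the identity follows in the full stated range and the upper and lower bounds coincide.
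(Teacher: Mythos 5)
Your plan matches the paper's treatment: the paper simply cites Pietsch (Section 11.11.5) and Pinkus for the Banach case $1\le p_2<p_1\le\infty$ and asserts that the proof "may be directly generalized" to $0<p_2<p_1\le\infty$, which is exactly the reduction you propose (explicit coordinate projection for the upper bound, classical argument checked for quasi-norm robustness for the lower bound). One small caution: the classical lower bound does not come from testing on coordinate subspaces, but from $a_k\ge c_k$ together with the lemma that every subspace of dimension $N-k+1$ contains a vector $x$ with $\|x\|_\infty=1$ attained on at least $N-k+1$ coordinates; that argument uses only the explicit form of the $\ell_p$ quasi-norms and elementary pointwise inequalities, so it does indeed survive the passage to $p_2<1$ as you anticipate.
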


The following lemma in the case $1\le p_1 < 2 < p_2\le \infty$ may
be found in  \cite{Sk05}. The proof may be trivially extended to the
quasi-Banach spaces with $0< p_1 < 2 < p_2\le \infty$, by virtue of
Lemma \ref{an1} (iii).

\begin{lemma}\label{an3}  Suppose $0< p_1 < 2 < p_2\le \infty$,
and assume that $p_2\neq\infty$\ when\ $0<p_1\le 1$. Then there is a
positive constant $C$ independent of $N$ and $k$ such that
\begin{equation}\label{alp2p}
a_{k}\Big({\rm id}, ~\ell_{p_1}^N, ~\ell_{p_2}^N\Big)\leq C\,
\left\{
\begin{array}{ll}
1 & {\rm if}\ \ k\leq N^{2/t},\\
N^{1/t}k^{-1/2}\quad & {\rm if}\ \ N^{2/t}< k \leq
N,\\
0 & {\rm if}\ \ n>N,
\end{array}\right.
\end{equation}
where $\frac 1t= \frac 1{\min(p_1^\prime, p_2)}.$
\end{lemma}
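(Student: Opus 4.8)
The plan is to split the range of $k$ into the three regimes appearing in the statement and to reduce everything to Lemma \ref{an1}, which already supplies the two-sided estimate $a_k \sim \min(1, N^{1/t}k^{-1/2})$, but only for $k \le N/4$. First I would dispose of the two extreme regimes. For $k \le N^{2/t}$ the claimed bound is merely $a_k \le C$, and this follows from monotonicity (PS1) together with $a_1(\mathrm{id}) = \|\mathrm{id} : \ell_{p_1}^N \to \ell_{p_2}^N\| = 1$, the norm being $1$ because $p_1 < p_2$ forces $\|x|\ell_{p_2}^N\| \le \|x|\ell_{p_1}^N\|$. For $k > N$ the identity has rank $N < k$, so the rank property (PS4) gives $a_k = 0$. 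These two observations account for the first and third lines of the asserted formula.

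The substance lies in the middle regime $N^{2/t} < k \le N$, where I must produce the bound $a_k \le C\, N^{1/t}k^{-1/2}$, and here I would split once more. When $N^{2/t} < k \le N/4$ the estimate is immediate from Lemma \ref{an1}: in this range $\min(1, N^{1/t}k^{-1/2}) = N^{1/t}k^{-1/2}$ since $k > N^{2/t}$, and Lemma \ref{an1}(ii) (in the case $1 \le p_1$) or Lemma \ref{an1}(iii) (in the case $0 < p_1 < 1$) applies directly. When $N/4 < k \le N$, where $k$ and $N$ are comparable, I would invoke monotonicity: setting $m = \lfloor N/4\rfloor$ one has $a_k \le a_m$, and since $2/t < 1$ (because $t = \min(p_1', p_2) > 2$ under the hypotheses $p_1 < 2 < p_2$) one checks that $N^{2/t} < \lfloor N/4\rfloor \le N/4$ for all sufficiently large $N$, so Lemma \ref{an1} again yields $a_m \le C\, N^{1/t} m^{-1/2} \le C'\, N^{1/t-1/2}$. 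Finally $N^{1/t-1/2} \le N^{1/t}k^{-1/2}$ because $k \le N$, which closes this subcase; the finitely many small values of $N$ are absorbed into the constant.

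The only point requiring care is the quasi-Banach extension, that is, the passage from $1 \le p_1$ (which is Skrzypczak's result, \cite{Sk05}) to $0 < p_1 < 1$. Here the role played by Lemma \ref{an1}(ii) must be taken over by Lemma \ref{an1}(iii). I would note that for $0 < p_1 < 1$ the convention $p_1' = \infty$ gives $t = \min(p_1', p_2) = p_2$, hence $N^{1/t} = N^{1/p_2}$, so the exponent $N^{1/p_2}$ in Lemma \ref{an1}(iii) is exactly the $N^{1/t}$ required by the statement; the hypothesis $p_2 \neq \infty$ when $0 < p_1 \le 1$ is precisely what makes Lemma \ref{an1}(iii) applicable, since it excludes the degenerate target $\ell_\infty^N$ (and, for $p_1 = 1$, the excluded pair $(1,\infty)$). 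With this substitution the argument of the previous paragraph goes through verbatim, so the main obstacle is not a hard estimate but the index bookkeeping ensuring that Lemma \ref{an1}(iii) slots into the places occupied by Lemma \ref{an1}(ii) in the Banach-space proof.
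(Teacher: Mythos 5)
Your argument is correct and is essentially the paper's own proof: the authors simply cite \cite{Sk05} for the Banach case $1\le p_1$ and note that the quasi-Banach extension follows from Lemma \ref{an1}(iii), and what you have written out (reduction to the two-sided estimate of Lemma \ref{an1} for $k\le N/4$, monotonicity plus $\|{\rm id}\|=1$ for small $k$ and for $N/4<k\le N$, the rank property for $k>N$, and the observation that $p_1'=\infty$ makes $t=p_2$ so that Lemma \ref{an1}(iii) supplies exactly the exponent $N^{1/t}$) is precisely that argument made explicit. No gaps.
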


\begin{prop}\label{an4}
Suppose $0< p_1 < 2 < p_2 \le \infty$\,and $
t=\min(p_1^\prime,p_2)$.
 \beg{enumerate}
\item[$(i)$] If\, $\min\big(\frac\delta d,\frac
{s^\prime}n\big)>\frac 1t$, then
 \vspace{-0.2cm}
\begin{equation}
a_{k}\Big({\rm id}, \ell_{q_1}(2^{j\delta}\ell_{p_1}(w)),
\ell_{q_2}(\ell_{p_2})\Big) \sim k^{-\min\big(\frac\delta d,\frac
{s^\prime}n\big)
 +\frac 1t-\frac 12}.
\end{equation}
 \vspace{-0.4cm}
\item[$(ii)$] If $\delta > s^\prime$ and either
 $
\begin{cases}
\delta<\frac dt,\\ \delta-s^\prime<\frac{2d-n}t,
\end{cases}\ \ {\rm or}\ \
\begin{cases}
\delta+s^\prime<\frac nt,\\ \delta-s^\prime>\frac{2d-n}t,
\end{cases} $
then
\begin{equation}
a_{k}\Big({\rm id}, \ell_{q_1}(2^{j\delta}\ell_{p_1}(w)),
\ell_{q_2}(\ell_{p_2})\Big) \sim k^{-\frac {s^\prime}n\cdot\frac
t2}.
\end{equation}
 \vspace{-0.4cm}
\item[$(iii)$] If\ $\delta<\frac dt,\ \delta<s^\prime<\frac nt$
and $\delta-s^\prime<\frac{2d-n}t$, then \vspace{-0.2cm}
\begin{equation}k^{-\frac
t2\cdot\min(\frac\delta d,\frac {s^\prime}n)}\preceq a_{k}\Big({\rm
id}, \ell_{q_1}(2^{j\delta}\ell_{p_1}(w)),
\ell_{q_2}(\ell_{p_2})\Big)\preceq k^{-\frac t2
\cdot\min(\frac\delta d,\frac {\delta+s^\prime}{2n})}.
\end{equation}
 \vspace{-0.4cm}
\item[$(iv)$] If\ $\delta<s^\prime,\ \delta+ s^\prime<\frac nt$ and
$\delta-s^\prime>\frac{2d-n}t$, then \vspace{-0.2cm}
\begin{equation}k^{-\frac
t2\cdot\min(\frac\delta d,\frac {s^\prime}n)}\preceq a_k\Big({\rm
id}, \ell_{q_1}(2^{j\delta}\ell_{p_1}(w)),
\ell_{q_2}(\ell_{p_2})\Big)\preceq k^{-\frac
tn\cdot\min(\delta,\frac {\delta+s^\prime}4)}.
\end{equation}
 \end{enumerate}
\end{prop}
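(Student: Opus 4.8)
The plan is to reduce the identity ${\rm id}:\ell_{q_1}(2^{j\delta}\ell_{p_1}(w))\to\ell_{q_2}(\ell_{p_2})$ to a block-diagonal operator and then assemble single-block estimates through the operator ideal quasi-norm $L_{r,\infty}^{(a)}$. First I would split $\mathbb{Z}^n$, for each fixed $j$, into the distance bands $D_{j,i}=\{\ell:\sqrt{n}2^{-j+i}<{\rm dist}(2^{-j}\ell,U)\le 4\sqrt{n}2^{-j+i}\}$ together with the innermost band near $U$. On $D_{j,i}$ the weight satisfies $w_{j,\ell}=(1+2^j\,{\rm dist}(2^{-j}\ell,U))^{s^\prime}\sim 2^{is^\prime}$ by its very definition, and the cardinality of $D_{j,i}$ is $N_{j,i}$ from Lemma \ref{Nji}. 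Restricting a sequence to the block $(j,i)$, the source quasi-norm picks up the factor $2^{j\delta}2^{is^\prime}$ while the target quasi-norm is unweighted, so ${\rm id}$ acts on this block as the scalar multiple $\sigma_{j,i}:=2^{-j\delta}2^{-is^\prime}$ of ${\rm id}:\ell_{p_1}^{N_{j,i}}\to\ell_{p_2}^{N_{j,i}}$. Since the outer $\ell_{q_1},\ell_{q_2}$ summations affect only constants (one may sandwich them between $\ell_1$ and $\ell_\infty$), the whole problem reduces to the countable direct sum $\bigoplus_{j,i}\sigma_{j,i}\,{\rm id}_{N_{j,i}}$, where Lemma \ref{Nji} gives $N_{j,i}\sim 2^{in}2^{(j-i)d}$ for $i<j$ and $N_{j,i}\sim 2^{in}$ for $i\ge j$.

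For the upper bounds in (i) and (ii) I would set $r=1/\varkappa$ and estimate $L_{r,\infty}^{(a)}({\rm id})$ by the $\rho$-subadditivity \eqref{idealsinq}, obtaining $L_{r,\infty}^{(a)}({\rm id})^\rho\le\sum_{j,i}\sigma_{j,i}^\rho\big(\sup_k k^{1/r}a_k({\rm id}_{N_{j,i}})\big)^\rho$. Lemma \ref{an3}, supplemented by Lemma \ref{an1}(ii),(iii), yields $a_k({\rm id}:\ell_{p_1}^N\to\ell_{p_2}^N)\preceq\min(1,N^{1/t}k^{-1/2})$, whose $k$-weighted supremum is comparable to $N^{2/(tr)}$ in the relevant range $r>2$. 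It then remains to show $\sum_{j,i}2^{-\rho(j\delta+is^\prime)}N_{j,i}^{2\rho/(tr)}<\infty$; inserting Lemma \ref{Nji} and separating the ranges $i<j$ and $i\ge j$ turns this into two geometric series whose ratios are negative powers of $2$ exactly when the hypotheses $\min(\delta/d,s^\prime/n)>1/t$ (for (i)), respectively $\delta>s^\prime$ together with the two displayed systems of inequalities (for (ii)), hold. This delivers $a_k\preceq k^{-\varkappa}$ with the stated $\varkappa$, whose value is governed by whether the $j$-exponent or the $i$-exponent first enforces convergence, which is precisely what the quantity $\min(\delta/d,s^\prime/n)$ records.

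The matching lower bounds in (i) and (ii) I would obtain by restricting ${\rm id}$ to a single, optimally chosen block $(j_0,i_0)$ and using $a_k({\rm id})\ge\sigma_{j_0,i_0}\,a_k({\rm id}:\ell_{p_1}^{N_{j_0,i_0}}\to\ell_{p_2}^{N_{j_0,i_0}})$ with the sharp two-sided single-block estimate of Lemma \ref{an1}(ii),(iii); balancing $j_0,i_0$ against the level $k$ (the distinction $i_0\lessgtr j_0$ being exactly where the conditions $\delta-s^\prime\lessgtr(2d-n)/t$ enter) reproduces the upper exponent. For parts (iii) and (iv), where $\delta<s^\prime$, this balancing breaks down: the ideal sum no longer converges at the naive $r=1/\varkappa$, and carrying it out at the admissible exponent produces the shifted upper bounds $k^{-\frac t2\min(\delta/d,(\delta+s^\prime)/2n)}$ and $k^{-\frac tn\min(\delta,(\delta+s^\prime)/4)}$, whereas the single-block lower bound still only gives $k^{-\frac t2\min(\delta/d,s^\prime/n)}$. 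Reconciling these is the main obstacle, and I expect it to be genuinely beyond the present tools: the true order should arise from summing the contributions of an entire diagonal of blocks rather than a single one, a two-parameter optimization that the finite-dimensional lemmas control from one side only. Accordingly, as in the statement, I would be content to record the non-matching two-sided estimates in (iii) and (iv).
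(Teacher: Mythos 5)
Your block decomposition, the reduction of ${\rm id}$ on each block to the scalar multiple $\sigma_{j,i}=2^{-j\delta}2^{-is^\prime}$ of ${\rm id}:\ell_{p_1}^{N_{j,i}}\to\ell_{p_2}^{N_{j,i}}$, and the single-block lower bounds all match the paper (Steps 1, 2 and 5 of its proof), and for part (i) the global ideal-norm summation can indeed be made to work: under $\min(\frac\delta d,\frac{s^\prime}n)>\frac 1t$ all the relevant exponents come out strictly negative. The genuine gap is in part (ii), where the same device fails. There $\varkappa=\frac{s^\prime}{n}\cdot\frac t2$, so $\frac{2n}{tr}=s^\prime$ at $r=1/\varkappa$, and for the blocks with $i\ge j$ (where $N_{j,i}\sim 2^{in}$) the summand is $\sigma_{j,i}^\rho N_{j,i}^{2\rho/(tr)}=2^{-\rho(j\delta+is^\prime)}2^{\rho i s^\prime}=2^{-\rho j\delta}$, independent of $i$; the series over $i$ diverges. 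The hypotheses of (ii) do not make your two series geometric with ratio less than one --- they make one of them exactly critical, so $L_{1/\varkappa,\infty}^{(a)}({\rm id})=\infty$ and no admissible $r$ recovers the sharp exponent $\frac{ts^\prime}{2n}$ from a single global ideal-norm bound. The paper's Step 4 circumvents this with a $k$-dependent decomposition along the diagonals $i+j=m$, cut at levels $M_1<M_2$ (for $i<j$) and $M_3<M_4$ (for $i\ge j$) chosen in terms of $\log_2 k$: the inner region is annihilated by finite rank, the middle region receives an explicit allocation of ranks $k_{j,i}=[k^{1-\varepsilon}2^{iz_1}2^{jz_2}]$ with carefully tuned $\varepsilon,z_1,z_2$, and the tail is bounded by operator norms. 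The same machinery produces the (non-matching) upper bounds of (iii) and (iv). Your closing intuition that one must sum ``an entire diagonal of blocks'' is correct, but that is already what is required to prove the \emph{upper} bound of (ii), not merely a possible route to closing the gap in (iii) and (iv).

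Two smaller points. First, the formula $\sup_k k^{1/r}a_k({\rm id},\ell_{p_1}^N,\ell_{p_2}^N)\sim N^{2/(tr)}$ holds only when $r>2$; in part (i) one may well have $1/\varkappa<2$, in which case the supremum is attained at $k\sim N$ and equals $N^{1/t+1/r-1/2}$, so the bookkeeping in your convergence check has to be redone in that regime (it still converges, but not for the reason you give). Second, when $0<p_1\le 1$ and $p_2=\infty$ one has $t=\infty$ and Lemmas \ref{an1} and \ref{an3} no longer apply; the paper treats this subcase of (i) separately via Lemma \ref{an1inf} and an auxiliary parameter $0<\lambda<1$, and your proposal does not address it.
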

\begin{proof}
{\tt Step 1}.\quad Preparations. We denote
$$\Lambda:=\{\lambda=(\lambda_{j,\ell}):
\,\quad\lambda_{j,\ell}\in\mathbb{C},\,\quad j\in
\mathbb{N}_0,\,\ell\in\mathbb{Z}^n\},$$ and set
$$ B_1=\ell_{q_1}(2^{j\delta}\ell_{p_1}(w))\,\quad {\rm
and}\,\quad  B_2= \ell_{q_2}(\ell_{p_2}).$$
Let\,$I_{j,i}\subset\mathbb{N}_0\times\mathbb{Z}^n$ be such that

\begin{equation}\label{Ij0}
I_{j,0}:=\{(j,\ell):\, {\rm dist} (2^{-j}\ell, U)\leq
\sqrt{n}2^{-j}\},\quad j\in\mathbb{N}_0,
\end{equation}
\begin{equation}\label{Iji}
I_{j,i}:=\{(j,\ell):\, \sqrt{n}2^{-j+i-1}<{\rm dist} (2^{-j}\ell,
U)\leq \sqrt{n}2^{-j+i}\}, \quad i\in \mathbb{N},\quad
j\in\mathbb{N}_0.
\end{equation}

Besides, let $P_{j,i}:\Lambda\mapsto\Lambda$ be the canonical
projection with respect to $I_{j,i}$, i.e., for $\lambda\in\Lambda$,
we put
\begin{equation*}
(P_{j,i}\lambda)_{u,v}:= \left\{
\begin{array}{ll}
\lambda_{u,v}\quad & (u,v)\in I_{j,i},\\
0\quad & {\rm otherwise},
\end{array}\right. \quad u\in
\mathbb{N}_0,\quad v\in\mathbb{Z}^n,\quad i\ge 0.
\end{equation*}
Then
\begin{equation}\label{id}
{\rm id}_\Lambda= \sum\limits_{j=0}^\infty\sum\limits_{i=0}^\infty
P_{j,i}.
\end{equation}

\begin{equation}
w_j(2^{-j}\ell) \sim (1+2^j2^{-j+i})^{s^\prime} \sim
2^{is^\prime}\quad {\rm if}\quad (j,\ell)\in I_{j,i}, \quad i \ge 0.
\end{equation}

Due to Lemma \ref{Nji} and the structure of $U$, cf. \cite{LS11}, we
have

\begin{equation}\label{rankpiece}
M_{j,i}:=|I_{j,i}|\le N_{j,i+2}+N_{j,i+3} \sim
\begin{cases}
2^{in}2^{(j-i)d},\ \ \ \ & 0\le i<j,\\
2^{in},& 0\le j\le i, \end{cases}
\end{equation}
and
\begin{equation}\label{rankpiecelow}
M_{j,i+1}+M_{j,i+2}\ge N_{j,i} \sim
\begin{cases}
2^{in}2^{(j-i)d},\ \ \ \ & 0\le i<j,\\
2^{in},& 0\le j\le i. \end{cases}
\end{equation}
 Thanks to simple monotonicity arguments
and explicit properties of the approximation numbers, there is a
positive constant $C$ independent of $k,\,j$ and $i$ such that
\begin{equation}\label{andiscr}
a_k(P_{j,i},B_1,B_2)\leq C  2^{-j\delta}2^{-is^\prime}a_k({\rm id},
\ell_{p_1}^{M_{j,i}}, \ell_{p_2}^{M_{j,i}}).
\end{equation}
{\tt Step 2}.\quad The operator ideal plays an important role. To
shorten notations we shall put $\frac 1s=\frac 1r+\frac 12$\, for
any $s>0$. By (\ref{idealsdef}) and (\ref{andiscr}), we have
\begin{equation}\label{idealdiscr}
L_{s,\infty}^{(a)}(P_{j,i})\leq C
2^{-j\delta}2^{-is^\prime}L_{s,\infty}^{(a)}({\rm id},
\ell_{p_1}^{M_{j,i}}, \ell_{p_2}^{M_{j,i}})
\end{equation} The known asymptotic
behavior of the approximation numbers $a_k({\rm
id},\ell_{p_1}^{N},\ell_{p_2}^{N})$, cf. (\ref{alp2p}), and
(\ref{rankpiece}) yield that, with the assumption $p_2\neq \infty$
if $0<p_1\le1$, \vspace{-0.2cm}
\begin{equation}\label{idealse2}
 L_{2,\infty}^{(a)}({\rm id}, \ell_{p_1}^{M_{j,i}},
 \ell_{p_2}^{M_{j,i}})\leq
C
\begin{cases}
2^{(in+d(j-i))/t},\ \ \  & 0\le i<j,\\
2^{\frac {in}t}, & 0\le j\le i,
\end{cases}\indent\indent\indent\
\end{equation}
\begin{equation}\label{ideals2}
L_{s,\infty}^{(a)}({\rm id}, \ell_{p_1}^{M_{j,i}},
\ell_{p_2}^{M_{j,i}})\leq C
\begin{cases}
2^{(in+d(j-i))(\frac 1t+\frac 1r)},\ \ \ \ & 0\le i<j,\ \frac 1s>\frac 12,\\
2^{in(\frac 1t+\frac 1r)},& 0\le j\le i,\ \frac 1s>\frac 12,
\end{cases}
\end{equation}
and in consequence \vspace{-0.2cm}
\begin{equation}\label{idealPji2}
L_{2,\infty}^{(a)}(P_{j,i})\leq C 2^{-j\delta}2^{-is^\prime}
\begin{cases}2^{(in+d(j-i))/t},\ \ \  & 0\le i<j, \\
2^{\frac {in}t}, & 0\le j\le i,
\end{cases}\indent\indent\indent\
\end{equation}
 \vspace{-0.1cm}
\begin{equation}\label{idealPjis}
L_{s,\infty}^{(a)}(P_{j,i})\leq C 2^{-j\delta}2^{-is^\prime}
\begin{cases}
2^{(in+d(j-i))(\frac 1t+\frac 1r)},\ \ \ \ & 0\le i<j,\ \frac 1s>\frac 12,\\
2^{in(\frac 1t+\frac 1r)},& 0\le j\le i,\ \frac 1s>\frac 12.
\end{cases}
\end{equation}

If $0<p_1\le1$ and $p_2=\infty$, we have $t=\infty$ and select
$0<\lambda<1$ such that
$\frac\lambda{2(1-\lambda)}<\min\big(\frac\delta d,\frac
{s^\prime}n\big)$. The inequality $\lambda\cdot \frac 1s\le\frac
1s-\frac 12$ holds if and only if $\frac 1s\ge\frac
1{2(1-\lambda)}$, where $0<\lambda<1$. So there exists a constant
$s>0$ such that $\lambda\cdot \frac 1s<\frac 1s-\frac
12<\min\big(\frac\delta d,\frac {s^\prime}n\big)$. Then, in a
similar way as above, we find by (\ref{an1infupp}) that
\begin{equation}\label{idealPjislam}
L_{s,\infty}^{(a)}(P_{j,i})\leq C 2^{-j\delta}2^{-is^\prime}
\begin{cases}
2^{(in+d(j-i))\frac 1r},\ \ \ \ & 0\le i<j,\ \frac 1s>\frac
1{2(1-\lambda)},\\
2^{\frac{in}r},& 0\le j\le i,\ \frac 1s>\frac 1{2(1-\lambda)}.
\end{cases}
\end{equation}
 {\tt Step 3}.\quad The estimate of $a_k({\rm id},
B_1, B_2)$ from above in case (i), $\min\big(\frac\delta d,\frac
{s^\prime}n\big)>\frac 1t$. Let $M\in\mathbb{N}_0$ be given. For the
identity operator ${\rm id}_\Lambda$, we use the same division, as
in the proof of Theorem 6 in \cite{LS11},
\begin{equation}\label{PQ5}
\begin{split}
P^1:=\sum\limits_{j=0}^M\sum\limits_{i=0}^{j-1}P_{j,i},\quad\quad
&P^2:=\sum\limits_{j=M+1}^\infty\sum\limits_{i=0}^{j-1}P_{j,i},\\
Q^1:=\sum\limits_{j=0}^M\sum\limits_{i=j}^M P_{j,i},\quad\quad
 &
Q^2:=\sum\limits_{j=0}^M \sum\limits_{i=M+1}^\infty
P_{j,i},\quad\quad
Q^3:=\sum\limits_{j=M+1}^\infty\sum\limits_{i=j}^\infty P_{j,i}.
\end{split}
\end{equation}
Next, the proof in this case follows literally the presentation
given in \cite{LS11} with the obvious changes, now using $1/r+1/t$
instead of $1/r-1/p$, where the latter notations derive from
\cite{LS11}. So we don't expand here.\\
 {\tt Step 4}.\quad Now let $\delta\neq s^\prime,\ {s^\prime}<n/t$ and
 $\delta-s^\prime\neq\frac{2d-n}t$.
We turn to using the following division
\begin{equation}\label{PQ6}
\begin{split}
{\rm id}= &\ \ \
\sum\limits_{m=0}^{M_1}\sum\limits_{\stackrel{i+j=m}{i<j}}P_{j,i}+
\sum\limits_{m=M_1+1}^{M_2}\sum\limits_{\stackrel{i+j=m}{i<j}}P_{j,i}+
\sum\limits_{m=M_2+1}^\infty \sum\limits_{\stackrel{i+j=m}{i<j}}P_{j,i}\\
&+\sum\limits_{m=0}^{M_3}\sum\limits_{\stackrel{i+j=m}{i\ge
j}}P_{j,i}+
\sum\limits_{m=M_3+1}^{M_4}\sum\limits_{\stackrel{i+j=m}{i\ge
j}}P_{j,i}+ \sum\limits_{m=M_4+1}^\infty
\sum\limits_{\stackrel{i+j=m}{i\ge j}}P_{j,i},
\end{split}
\end{equation}
where $M_1, M_2, M_3, M_4\in\mathbb{N}$,\,\,$M_1<M_2$\, and\,
$M_3<M_4$, which will be determined later on for given
$k\in\mathbb{N}$. In terms of the subadditivity of $s$-numbers, we
observe
\begin{equation}\label{an_divi6}
a_{k^\prime}({\rm id}, B_1, B_2)\leq
\triangle_1+\triangle_2+\triangle_3+\triangle_4+\triangle_5+\triangle_6,
\end{equation}
where
\begin{equation*}
\begin{split}
\triangle_1=\sum\limits_{m=0}^{M_1}\sum\limits_{\stackrel{i+j=m}{i<j}}a_{k_{j,i}}
(P_{j,i}),&\ \
\triangle_2=\sum\limits_{m=M_1+1}^{M_2}\sum\limits_{\stackrel{i+j=m}{i<j}}a_{k_{j,i}}
(P_{j,i}),\ \
\triangle_3=\sum\limits_{m=M_2+1}^\infty \sum\limits_{\stackrel{i+j=m}{i<j}}\|P_{j,i}\|,\\
\triangle_4=\sum\limits_{m=0}^{M_3}\sum\limits_{\stackrel{i+j=m}{i\ge
j}}a_{k_{j,i}} (P_{j,i}),&\ \
\triangle_5=\sum\limits_{m=M_3+1}^{M_4}\sum\limits_{\stackrel{i+j=m}{i\ge
j}}a_{k_{j,i}} (P_{j,i}),\ \
\triangle_6=\sum\limits_{m=M_4+1}^\infty
\sum\limits_{\stackrel{i+j=m}{i\ge j}}\|P_{j,i}\|,
\end{split}
\end{equation*}
and\vspace{-0.2cm}
\begin{equation*}
k^\prime-1=\sum\limits_{m=0}^{M_2}\sum\limits_{\stackrel{i+j=m}{i<j}}(k_{j,i}-1)+
\sum\limits_{m=0}^{M_4}\sum\limits_{\stackrel{i+j=m}{i\ge
j}}(k_{j,i}-1).
\end{equation*}
{\tt Substep 4.1.}\quad First we deal with those parts concerning
$i<j$\, (i.e., $\triangle_1,\triangle_2$ and $\triangle_3$). We take
\begin{equation*}
M_1=\begin{cases} \Big[\frac {\log_2k}{n/2}\Big],\ \ &{\rm if}\
2d<n,\vspace{0.1cm}\\
\Big[\frac {\log_2k}d\Big],\ \ &{\rm if}\ 2d>n,\vspace{0.1cm}\\
 \Big[\frac
{\log_2k}d-\frac{\log_2\log_2k}d\Big], &{\rm if}\ 2d=n,
\end{cases}\ \ \ {\rm and}\ \ \
M_2=\begin{cases} \Big[\frac t2 \cdot\frac{\log_2k}d\Big],\ \ &{\rm
if}\
\delta-s^\prime<\frac{2d-n}t,\vspace{0.8cm}\\
\Big[\frac t2 \cdot\frac {\log_2k}{n/2}\Big],\ \ &{\rm if}\
\delta-s^\prime>\frac{2d-n}t,
\end{cases}
\end{equation*}
where $[a]$ denotes the largest integer smaller than
$a\in\mathbb{R}$ and $\log_2k$ is a dyadic logarithm of $k$. Then
\begin{equation*}
\begin{split}
\triangle_3&=\sum\limits_{m=M_2+1}^\infty
\sum\limits_{\stackrel{i+j=m}{i<j}}\|P_{j,i}\|\le
c_1\sum\limits_{m=M_2+1}^\infty
\sum\limits_{\stackrel{i+j=m}{i<j}} 2^{-j\delta}2^{-is^\prime}\\
&\le \begin{cases}
c_2 \sum\limits_{m=M_2+1}^\infty 2^{-m\delta}\le
c_3 2^{-M_2\delta},\ \ &{\rm if}\ \ \delta<s^\prime,\\
c_2 \sum\limits_{m=M_2+1}^\infty 2^{-\frac m2(\delta+s^\prime)}\le
c_3 2^{-M_2\frac {\delta+s^\prime}2},\ \ &{\rm if}\ \
\delta>s^\prime.
\end{cases}
\end{split}
\end{equation*}
Next, we choose proper $k_{j,i}$ for estimating $\triangle_1$ and
$\triangle_2$. If $i<j$ and $i+j\leq M_1,$ we take
$k_{j,i}=M_{j,i}+1$ such that $a_{k_{j,i}}(P_{j,i})=0$ and
$\triangle_1=0$. And we obtain
$$
\sum\limits_{m=0}^{M_1}\sum\limits_{\stackrel{i+j=m}{i<j}}k_{j,i}\leq
 c_1\sum\limits_{m=0}^{M_1}\sum\limits_{\stackrel{i+j=m}{i<j}}2^{dj}2^{(n-d)i}
\le \begin{cases} c_2\sum\limits_{m=0}^{M_1}2^{m\frac n2}\le
c_32^{M_1\frac n2}\leq c_3 k,\ \ &{\rm if}\
2d<n,\vspace{0.2cm}\\
c_2\sum\limits_{m=0}^{M_1}2^{md}\le
c_32^{M_1d}\leq c_3 k,\ \ &{\rm if}\ 2d>n,\vspace{0.2cm}\\
c_2\sum\limits_{m=0}^{M_1}\frac m22^{md}\le c_3M_1\cdot2^{M_1d}\leq
c_4 k, &{\rm if}\ 2d=n.
\end{cases}
$$
Now we give the crucial choice of $k_{j,i}$ for $\triangle_2$. We
put
$$k_{j,i}=[k^{1-\varepsilon}\cdot2^{iz_1}\cdot2^{jz_2}],$$
where $\varepsilon, z_1, z_2$ are positive real numbers such that

$$\delta+\frac{z_2}2<\frac dt,\
0<\frac{z_2-z_1}2<s^\prime-\delta+\frac{2d-n}t\quad {\rm and}\quad
\frac{z_2t}{2d}=\varepsilon\quad {\rm if}\ \delta<\frac dt\ {\rm
and}\ \delta-s^\prime<\frac{2d-n}t,$$ or
$$\delta+s^\prime+\frac{z_1+z_2}2<\frac nt,\
0<\frac{z_1-z_2}2<\delta-s^\prime+\frac{n-2d}t,\
\frac{(z_1+z_2)t}{2n}=\varepsilon\ {\rm if}\, \delta+s^\prime<\frac
nt\ {\rm and}\ \delta-s^\prime>\frac{2d-n}t.$$ Note that the
relation, $0<\varepsilon<1,$ holds obviously. We observe that
\begin{equation*}
\begin{split}
&\sum\limits_{m=M_1+1}^{M_2}\sum\limits_{\stackrel{i+j=m}{i<j}}k_{j,i}\le
c_1k^{1-\varepsilon}\sum\limits_{m=M_1+1}^{M_2}\sum\limits_{\stackrel{i+j=m}{i<j}}
2^{iz_1}\cdot2^{jz_2}
\\
&\le \begin{cases} c_2 k^{1-\varepsilon}\sum\limits_{m=M_1+1}^{M_2}
2^{mz_2}\le c_3 k^{1-\varepsilon}2^{M_2z_2}=c_3k,\  &{\rm if}\ \
\delta<\frac dt\ {\rm
and}\ \delta-s^\prime<\frac{2d-n}t,\\
c_2 k^{1-\varepsilon}\sum\limits_{m=M_1+1}^{M_2}
2^{m\frac{z_1+z_2}2}\le c_3
k^{1-\varepsilon}2^{M_2\frac{z_1+z_2}2}=c_3k, &{\rm if}\ \
\delta+s^\prime<\frac nt\ {\rm and}\ \delta-s^\prime>\frac{2d-n}t,
\end{cases}
\end{split}
\end{equation*}
and, in terms of (\ref{idealPji2}),
\begin{equation*}
\begin{aligned}
\triangle_2 &\leq
c_1\sum\limits_{m=M_1+1}^{M_2}\sum\limits_{\stackrel{i+j=m}{i<j}}
2^{-j\delta-is^\prime}
2^{(in+d(j-i))/t}[k^{1-\varepsilon}\cdot2^{iz_1}\cdot2^{jz_2}]^{-\frac 12}\\
&\le c_2k^{-\frac
12(1-\varepsilon)}\sum\limits_{m=M_1+1}^{M_2}\sum\limits_{\stackrel{i+j=m}{i<j}}
2^{-j(\delta-\frac dt+\frac{z_2}2)}2^{-i(s^\prime-\frac{n-d}t+\frac{z_1}2)}\\
&\le
\begin{cases} c_3 k^{-\frac
12(1-\varepsilon)}\sum\limits_{m=M_1+1}^{M_2} 2^{-m(\delta-\frac
dt+\frac{z_2}2)}\le c_4 2^{-M_2\delta}=c_4k^{-\frac{t\delta}{2d}},\
{\rm if}\ \ \delta<\frac dt\ {\rm and}\
\delta-s^\prime<\frac{2d-n}t,\\
c_3 k^{-\frac 12(1-\varepsilon)}\sum\limits_{m=M_1+1}^{M_2}
2^{-\frac m2(\delta+s^\prime+\frac{z_1+z_2}2-\frac nt)}\le c_4
k^{-\frac {t(\delta+s^\prime)}{2n}},\ {\rm if}\
\delta+s^\prime<\frac nt\ {\rm and}\ \delta-s^\prime>\frac{2d-n}t.
\end{cases}
\end{aligned}
\end{equation*}
 {\tt
Substep 4.2.}\quad  We consider those parts concerning $i\ge j$\,
(i.e., $\triangle_4,\triangle_5$ and $\triangle_6$). We take
\begin{equation*}
M_3= \Big[\frac {\log_2k}{n}\Big]\ \  \ {\rm and}\ \ \ M_4=
\Big[\frac t2 \cdot\frac{\log_2k}{n}\Big].
\end{equation*}
It should be noted that the inequalities, $M_1\neq M_3$ and $M_2\neq
M_4$, hold in general. Then
\begin{equation*}
\begin{split}
\triangle_6&=\sum\limits_{m=M_2+1}^\infty
\sum\limits_{\stackrel{i+j=m}{i\ge j}}\|P_{j,i}\|\le
c_1\sum\limits_{m=M_2+1}^\infty
\sum\limits_{\stackrel{i+j=m}{i\ge j}} 2^{-j\delta}2^{-is^\prime}\\
&\le \begin{cases} c_2 \sum\limits_{m=M_2+1}^\infty 2^{-\frac
m2(\delta+s^\prime)}\le c_3 2^{-M_4\frac {\delta+s^\prime}2}\le c_3
k^{-\frac
{t(\delta+s^\prime)}{4n}},\ \ &{\rm if}\ \ \delta<s^\prime,\\
c_2 \sum\limits_{m=M_2+1}^\infty 2^{-ms^\prime}\le c_3
2^{-M_4s^\prime}\le c_3 k^{-\frac{ts^\prime}{2n}},\ \ &{\rm if}\ \
\delta>s^\prime.
\end{cases}
\end{split}
\end{equation*}
 If $i\ge j$ and $i+j\leq M_3$, we take $k_{j,i}=M_{j,i}+1$ such
that $a_{k_{j,i}}(P_{j,i})=0$ and $\triangle_4=0$. Moreover,
$$
\sum\limits_{m=0}^{M_3}\sum\limits_{\stackrel{i+j=m}{i\ge
j}}k_{j,i}\le
 c_1\sum\limits_{m=0}^{M_3}\sum\limits_{\stackrel{i+j=m}{i\ge j}}2^{ni}
\le c_2\sum\limits_{m=0}^{M_3}2^{mn}\le c_32^{M_3 n}\leq c_4
k.\vspace{0.2cm}
$$
For $\triangle_5$, we put similarly
$$k_{j,i}=[k^{1-\varepsilon}\cdot2^{iz_3}\cdot2^{jz_4}],$$
where $\varepsilon, z_3, z_4$ are positive real numbers such that
$s^\prime+\frac{z_3}2<\frac nt,\
0<\frac{z_3-z_4}2<\delta-s^\prime+\frac nt \ {\rm and} \
\frac{z_3t}{2n}=\varepsilon$. Recall that ${s^\prime}<n/t$. And
observe that
\begin{equation*}
\sum\limits_{m=M_3+1}^{M_4}\sum\limits_{\stackrel{i+j=m}{i\ge
j}}k_{j,i}\le
c_1k^{1-\varepsilon}\sum\limits_{m=M_3+1}^{M_4}\sum\limits_{\stackrel{i+j=m}{i\ge
j}} 2^{iz_3}\cdot2^{jz_4} \le  c_2
k^{1-\varepsilon}\sum\limits_{m=M_3+1}^{M_4} 2^{mz_3}\le c_3
k^{1-\varepsilon}2^{M_4z_3}=c_3k,
\end{equation*}
and, in terms of (\ref{idealPji2}),\vspace{-0.2cm}
\begin{equation*}
\begin{aligned}
\triangle_5 &\leq
c_1\sum\limits_{m=M_3+1}^{M_4}\sum\limits_{\stackrel{i+j=m}{i\ge j}}
2^{-j\delta-is^\prime}
2^{in/t}[k^{1-\varepsilon}\cdot2^{iz_3}\cdot2^{jz_4}]^{-\frac 12}\\
& \le c_2k^{-\frac
12(1-\varepsilon)}\sum\limits_{m=M_3+1}^{M_4}\sum\limits_{\stackrel{i+j=m}{i\ge
j}}
2^{-j(\delta+\frac{z_4}2)}2^{-i(s^\prime-\frac{n}t+\frac{z_3}2)}\\
& \le
 c_3 k^{-\frac
12(1-\varepsilon)}\sum\limits_{m=M_3+1}^{M_4}
2^{-m(s^\prime-\frac{n}t+\frac{z_3}2)}\\
& \le c_4 2^{-M_4s^\prime}=c_4k^{-\frac{ts^\prime}{2n}}.
\end{aligned}
\end{equation*}

Summarizing all the estimates of the six parts (in fact,
$\triangle_2,\, \triangle_3,\,\triangle_5$\, and\ $\triangle_6$) in
each case of (ii)-(iv), we obtain
 the upper bounds in these cases, respectively, as required.
 We wish to mention that, in case (ii), if $\delta>s^\prime$ and
$\delta-s^\prime<\frac{2d-n}t$, then the inequality $2d>n$ is valid,
and similarly in case (iv), the relation $2d<n$ holds.
 \\
 {\tt Step 5}.\quad The lower estimate of $a_k({\rm id}, B_1, B_2)$.
 Consider the following diagram
\begin{equation}
\begin{CD}
\ell_{p_1}^{M_{j,i}} @>S_{j,i}>>
\ell_{q_1}(2^{j\delta}\ell_{p_1}(w))
\\
@VV{\rm id_1}V @VV{\rm id}V\\
\ell_{p_2}^{M_{j,i}} @<T_{j,i}<< \ell_{q_2}(\ell_{p_2})
\end{CD}
\end{equation}
Here,
\begin{equation*}
\begin{array}{rl}
(S_{j,i}\eta)_{u,v}:=&\left\{
\begin{array}{ll}\eta_{\varphi(u,v)}\,
&{\rm if}\quad (u,v)\in I_{j,i}, \\ 0 \, &{\rm otherwise,}
\end{array}\right.\vspace{0.1cm}\\
(T_{j,i}\lambda)_{\varphi(u,v)}:=&\lambda_{u,v},\quad\quad\quad
(u,v)\in I_{j,i},\end{array}
\end{equation*}
and $\varphi$ denotes a bijection of $I_{j,i}$ onto $\{1, \ldots,
M_{j,i}\},\, j\in\mathbb{N}_0,\, i\in\mathbb{N}_0$; cf. (\ref{Ij0})
and (\ref{Iji}). Observe that
\begin{equation*}
\begin{array}{ll}
S_{j,i}\in \mathcal {L}\left(\ell_{p_1}^{M_{j,i}},\,
\ell_{q_1}(2^{j\delta}\ell_{p_1}(\alpha))\right) \quad  &{\rm
and}\quad \|S_{j,i}\|=2^{j\delta+is^\prime},\vspace{0.1cm}\\
T_{j,i}\in  \mathcal {L}\left(\ell_{q_2}(\ell_{p_2}),\,
\ell_{p_2}^{M_{j,i}}\right) \quad  &{\rm and}\quad \|T_{j,i}\|=1.
\end{array}
\end{equation*}
Hence we obtain
\begin{equation}\label{diagST}
a_k(\,{\rm id}_1)\leq\|S_{j,i}\|\,\|T_{j,i}\|\,a_k(\,{\rm id}).
\end{equation}

In the following four points, we first assume $p_2\neq\infty$ if
$0<p_1\le1$.

 (a)\, Let $\frac 1t<\frac\delta d\le \frac{s^\prime}n$.  We
consider the case $i=0,\,j\ge\frac 2d$. Lemma \ref{Nji} implies that
$N_{j,0}\sim 2^{d j}$. In view of (\ref{rankpiecelow}), we observe
that either $M_{j,1}$ or $M_{j,2}$ is no smaller than $N_{j,0}/2$.
So by (\ref{rankpiece}), we may assume that
$N:=M_{j,1}=|I_{j,1}|\sim 2^{jd}$. Moreover,
$$\|S_{j,1}\|\leq C2^{j\delta}\quad{\rm and}\quad\|T_{j,1}\|=1.$$
Put $m=\left[\frac N4\right]\sim2^{jd-2}.$ And for sufficiently
large $N$ we have $m\ge N^{2/t}$ since $t>2.$ Consequently, we
observe by Lemma \ref{an1} that\vspace{-0.2cm}
\begin{equation*}
a_m({\rm id}_1,\,\ell_{p_1}^N,\,\ell_{p_2}^N)\sim N^{\frac
1t}m^{-\frac 12}\sim2^{(jd-2)(\frac 1t-\frac 12)}.
\end{equation*}
Using (\ref{diagST}), we obtain
\begin{equation*}
a_{2^{jd-2}}({\rm id})\ge C_12^{-j\delta}2^{(jd-2)(\frac 1t-\frac
12)}\ge C_22^{(jd-2)(\frac 1t-\frac 12-\frac\delta d)}.
\end{equation*}
Then the monotonicity of the approximation numbers implies that for
any $ k\in \mathbb{N}$
\begin{equation}
a_k({\rm id})\ge C_3k^{-(\frac\delta d+\frac 12-\frac 1t)}.
\end{equation}

(b)\, Let $\frac 1t<\frac{s^\prime}n\le \frac\delta d$. We consider
the case $j=0,\,j\ge\frac 2d$. Then, in a similar manner as above,
we may assume that $N:=M_{0,i+1}=|I_{0,i+1}|\sim 2^{in}$. Moreover,
$$\|S_{0,i+1}\|\leq C2^{is^\prime}\quad{\rm and}\quad\|T_{0,i+1}\|=1.$$
Also put $m=\left[\frac N4\right]\sim2^{ni-2}.$ Hence we have
similarly for any $ k\in \mathbb{N}$
\begin{equation}
a_k({\rm id})\ge Ck^{-(\frac{s^\prime}n+\frac 12-\frac 1t)}.
\end{equation}

(c)\, Let $\frac\delta d\le\frac 1t$ and $\frac\delta d\le
\frac{s^\prime}n$. We select the same $N,\,S$, and\ $T$ as in point
(a) and take $m=\left[N^{\frac 2t}\right]\leq\frac N4$ for
sufficiently large $N.$ Then $N^{\frac 1t}m^{-\frac 12}\sim 1.$
Hence by Lemma \ref{an1} and (\ref{diagST}) we obtain
\begin{equation*}
a_m({\rm id})\ge C2^{-j\delta}=C2^{-jd\frac 2t\frac{t\delta}{2d}},
\end{equation*}
and then for any $ k\in \mathbb{N}$
\begin{equation}\label{delts}
a_k({\rm id})\ge Ck^{-\frac{t\delta}{2d}}.
\end{equation}

(d)\, Let $\frac{s^\prime}n\le\frac 1t$ and $\frac{s^\prime}n\le
\frac\delta d$. We select the same $N,\,S$, and\ $T$ as in point (b)
and take $m=\left[N^{\frac 2t}\right]$ in the same way as in point
(c). Then analogously
\begin{equation*}
a_m({\rm id})\ge C2^{-is^\prime}=C2^{-ni\frac
2t\frac{ts^\prime}{2n}},
\end{equation*}
and in consequence, for any $k\in \mathbb{N}$
\begin{equation}
a_k({\rm id})\ge Ck^{-\frac{ts^\prime}{2n}}.
\end{equation}

If $0<p_1\le1$ and $p_2=\infty$, we have $t=\infty$ and consider two
cases, $0<\frac\delta d\le\frac {s^\prime}n$\
 or $0<\frac {s^\prime}n<
\frac\delta d$. And we choose $m=\big[\frac N2\big]$ where $N$ is
taken in the same way as in point (a) or (b), respectively, now
using (\ref{an1inflow}) instead of Lemma \ref{an1}.

Finally, we mention that in case (ii), the condition
$\delta>s^\prime$ implies the inequality
$\frac{s^\prime}n<\frac\delta d$.

 The proof of the proposition is now complete.
\end{proof}

\begin{re}
In the situation considered in Proposition \ref{an4}, how do the
approximation numbers behave, if ${s^\prime}<n/t$ and,
$\delta=s^\prime$ or $\delta-s^\prime=\frac{2d-n}t$? The lower bound
given in (\ref{delts}) may be the exact asymptotic estimate in some
special cases.
\end{re}

\begin{prop}\label{an5}
Suppose $0< p_1 \le p_2 \le 2$ or $2\le p_1 \le p_2 \le \infty$.
Then
 \vspace{-0.2cm}
\begin{equation}
a_{k}\Big({\rm id}, \ell_{q_1}(2^{j\delta}\ell_{p_1}(w)),
\ell_{q_2}(\ell_{p_2})\Big) \sim  k^{-\varkappa},
\end{equation}
with
\begin{equation}
\varkappa=\min\big(\frac\delta d,\frac {s^\prime}n\big).
\end{equation}
\end{prop}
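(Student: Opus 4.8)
The plan is to run the same block-decomposition machinery used in the proof of Proposition \ref{an4}, exploiting the crucial simplification provided by Lemma \ref{an1}$(i)$: in the present range $0<p_1\le p_2\le 2$ or $2\le p_1\le p_2\le\infty$ one has $a_k({\rm id},\ell_{p_1}^N,\ell_{p_2}^N)\sim 1$ for $k\le N/4$, with no intermediate $k^{-1/2}$ decay. Thus, keeping the notation of Proposition \ref{an4}, I would write ${\rm id}_\Lambda=\sum_{j,i}P_{j,i}$, record the norm bound $\|P_{j,i}\|\preceq 2^{-j\delta}2^{-is^\prime}$ together with the cardinalities $M_{j,i}$ from (\ref{rankpiece})--(\ref{rankpiecelow}), and note that here (\ref{andiscr}) degenerates to the \emph{kill-or-keep} alternative $a_k(P_{j,i})\preceq 2^{-j\delta}2^{-is^\prime}$ for $k\le M_{j,i}$ and $a_k(P_{j,i})=0$ for $k>M_{j,i}$. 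In particular, the partial-rank trick $k_{j,i}=[k^{1-\varepsilon}2^{iz_1}2^{jz_2}]$ of Proposition \ref{an4} is useless, since removing fewer than $\sim M_{j,i}$ coordinates does not lower the norm; equivalently, for the relevant ideal one has $L_{1/\varkappa,\infty}^{(a)}(P_{j,i})\preceq 2^{-j\delta}2^{-is^\prime}M_{j,i}^{\varkappa}$.

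For the upper estimate I would reuse the anti-diagonal splitting (\ref{PQ6}), grouping the blocks according to $m=i+j$ and treating the regions $i<j$ and $i\ge j$ separately. On a head set of anti-diagonals I allocate full rank $k_{j,i}=M_{j,i}+1$, so these blocks vanish; by (\ref{rankpiece}) the total rank spent stays $\preceq k$ precisely when the head reaches level $\sim\log_2 k$. The surviving blocks are then bounded in norm via subadditivity, and along each anti-diagonal the quantity $2^{-j\delta}2^{-is^\prime}=2^{-m\delta}2^{-i(s^\prime-\delta)}$ (respectively its $i\ge j$ analogue) is summed geometrically, after which the outer sum over $m$ is again geometric. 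The rank budget is governed by the two corners of the index region: the column $i=0$, where $M_{j,0}\sim 2^{jd}$ enforces the rate $\delta/d$, and the row $j=0$, where $M_{0,i}\sim 2^{in}$ enforces the rate $s^\prime/n$; the slower of the two, namely $\varkappa=\min(\delta/d,s^\prime/n)$, is the bottleneck and yields $a_k({\rm id})\preceq k^{-\varkappa}$.

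The matching lower bound I would obtain exactly as in Step 5 of Proposition \ref{an4}, via the factorization diagram built from $S_{j,i}$ and $T_{j,i}$ with $\|S_{j,i}\|\sim 2^{j\delta+is^\prime}$ and $\|T_{j,i}\|=1$, so that (\ref{diagST}) gives $a_m({\rm id})\succeq 2^{-j\delta-is^\prime}a_m({\rm id},\ell_{p_1}^{M_{j,i}},\ell_{p_2}^{M_{j,i}})$. Testing the column $i=0$ with $N=M_{j,1}\sim 2^{jd}$ and $m=[N/4]$, Lemma \ref{an1}$(i)$ gives $a_m({\rm id},\ell_{p_1}^N,\ell_{p_2}^N)\sim 1$, whence $a_m({\rm id})\succeq 2^{-j\delta}\sim m^{-\delta/d}$; testing the row $j=0$ with $N=M_{0,i+1}\sim 2^{in}$ gives $a_m({\rm id})\succeq m^{-s^\prime/n}$. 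By monotonicity both inequalities hold for all $k$, and taking the larger of the two lower bounds produces $a_k({\rm id})\succeq k^{-\min(\delta/d,s^\prime/n)}=k^{-\varkappa}$, matching the upper estimate.

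The main obstacle is, as usual, the upper bound at the borderline, where a careless level-set removal would leak a logarithmic factor: when the two rates coincide ($\delta/d=s^\prime/n$), or when $2d=n$, or when $\delta=s^\prime$ so that $2^{-i(s^\prime-\delta)}$ is constant along an anti-diagonal, the geometric summations above degenerate and $O(\log k)$ equally large blocks accumulate. The remedy is precisely the refined threshold choices with logarithmic corrections used in Step 4 of Proposition \ref{an4} (the $\log_2\log_2 k$ term in the case $2d=n$) and in \cite{LS11}; with those corrections the head absorbs the extra blocks without exceeding the rank budget and the sharp order $k^{-\varkappa}$ survives. The two families $0<p_1\le p_2\le 2$ and $2\le p_1\le p_2\le\infty$, including the endpoint $p_2=\infty$, are then handled uniformly, since Lemma \ref{an1}$(i)$ covers all of them with the same constant-order estimate.
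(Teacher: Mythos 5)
Your lower bound is exactly the paper's (points (a) and (b) of Step 5 of the proof of Proposition \ref{an4}), but your upper bound has a genuine gap, and it is not merely the borderline/logarithmic issue you flag at the end. The problem is the choice of the kill region as a union of anti-diagonals $\{i+j\le M_1\}$. In the region $i<j$ one has ${\rm rank}(P_{j,i})=M_{j,i}\sim 2^{jd+i(n-d)}=2^{md+i(n-2d)}$ along $i+j=m$, so when $n>2d$ the rank of an anti-diagonal is dominated by the \emph{interior} corner $i\approx j\approx m/2$ and is of size $2^{mn/2}$, forcing $M_1\sim\frac2n\log_2k$; your claim that the rank budget is governed by the corners $i=0$ and $j=0$ is therefore false for this grouping. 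Meanwhile the norm along the same anti-diagonal, $2^{-m\delta}\sum_{i<m/2}2^{-i(s^\prime-\delta)}$, is dominated by the \emph{outer} corner $i=0$ when $\delta<s^\prime$, so your tail estimate yields only $2^{-M_1\delta}=k^{-2\delta/n}$. If $n>2d$ and $s^\prime>2\delta$ (a regime fully compatible with the hypotheses, e.g.\ $n=2$, $d=1/2$, $\delta=1$, $s^\prime=10$), then $2\delta/n<\min\big(\frac\delta d,\frac{s^\prime}n\big)$ and your argument misses the sharp order by a polynomial factor. The mismatch between where rank and where norm concentrate on an anti-diagonal is precisely what the partial-rank allocation $k_{j,i}=[k^{1-\varepsilon}2^{iz_1}2^{jz_2}]$ compensates for in Step 4 of Proposition \ref{an4}; one cannot discard that device and still keep the anti-diagonal grouping.

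The repair is to choose the head along level sets of the block norms, $\{(j,i):\,j\delta+is^\prime\le L\}$, rather than along anti-diagonals --- equivalently, to use the square head $P^1+Q^1$ of the decomposition (\ref{PQ5}) with the three tails $P^2$, $Q^2$, $Q^3$ treated separately. This is in fact what the paper does: it follows the proof of Theorem 6 in \cite{LS11} verbatim, feeding in the single new ingredient $L^{(a)}_{s,\infty}({\rm id},\ell_{p_1}^{M_{j,i}},\ell_{p_2}^{M_{j,i}})\le C\,M_{j,i}^{1/s}$ for every $s>0$ (a consequence of Lemma \ref{an1}(i)), and lets the ideal quasi-norm inequality (\ref{idealsinq}) together with the free parameter $s$ perform the rank allocation over the infinite tails; this also disposes of the degenerate cases $\frac\delta d=\frac{s^\prime}n$, $2d=n$ and $\delta=s^\prime$ without leaking logarithms.
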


\begin{proof} The upper bound can be proved in the same way
as in the proof of Theorem 6 in \cite{LS11} using $1/s$ instead of
$1/r-1/p$, where the latter notations follow from \cite{LS11}. In
fact, in the estimate from above,
 in view of Lemma \ref{an1} (i), we
obtain that for any $s>0$, \vspace{-0.2cm}
\begin{equation}
 L_{s,\infty}^{(a)}({\rm id}, \ell_{p_1}^{M_{j,i}},
 \ell_{p_2}^{M_{j,i}})\leq
C
\begin{cases}
2^{(in+d(j-i))/s},\ \ \  & 0\le i<j,\\
2^{in/s}, & 0\le j\le i.
\end{cases}
\end{equation}

In the estimate from below we only need to consider two cases. If
$0<\frac\delta d\le\frac {s^\prime}n$\ we take the same $N,\ S$\
and\ $T$\ as in point (a) of Step 5 of the last proof. If\ $0<\frac
{s^\prime}n< \frac\delta d$\ we choose the same $N,\ S$\ and\ $T$\
as in point (b) therein.
\end{proof}
\begin{prop}\label{an6}
Suppose $\tilde{p}<p_2<p_1\leq\infty$ where $\frac
1{\tilde{p}}=\min\big(\frac\delta d,\frac {s^\prime}n\big)+\frac
1{p_1}$. Then
 \vspace{-0.2cm}
\begin{equation}
a_{k}\Big({\rm id}, \ell_{q_1}(2^{j\delta}\ell_{p_1}(w)),
\ell_{q_2}(\ell_{p_2})\Big) \sim  k^{-\varkappa},
\end{equation}
with
\begin{equation}
\varkappa=\min\big(\frac\delta d,\frac {s^\prime}n\big)+\frac
1{p_1}-\frac 1{p_2}.
\end{equation}
\end{prop}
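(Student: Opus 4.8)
The plan is to mirror the scheme already used for Propositions~\ref{an4} and~\ref{an5}, replacing the finite-dimensional input by Lemma~\ref{an2}. As in those proofs I first reduce everything to $s_k({\rm id},B_1,B_2)$ via (\ref{AABB}), use the block decomposition ${\rm id}_\Lambda=\sum_{j,i}P_{j,i}$ from (\ref{id}), and record the per-block estimate $a_k(P_{j,i},B_1,B_2)\le C\,2^{-j\delta}2^{-is^\prime}a_k({\rm id},\ell_{p_1}^{M_{j,i}},\ell_{p_2}^{M_{j,i}})$, the analogue of (\ref{andiscr}). Since now $p_2<p_1$, Lemma~\ref{an2} supplies the exact value $a_k({\rm id},\ell_{p_1}^{M_{j,i}},\ell_{p_2}^{M_{j,i}})=(M_{j,i}-k+1)^{\beta}$ for $k\le M_{j,i}$ (and $0$ for $k>M_{j,i}$), where $\beta:=\tfrac1{p_2}-\tfrac1{p_1}>0$. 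Writing $\kappa:=\min(\tfrac\delta d,\tfrac{s^\prime}n)$ so that $\varkappa=\kappa-\beta$, I note that the hypothesis $\tilde p<p_2$ is exactly $\tfrac1{p_2}<\tfrac1{\tilde p}=\kappa+\tfrac1{p_1}$, i.e.\ $0<\beta<\kappa$. This strict gap $\beta<\kappa$ will drive the whole argument.

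For the lower bound I would reuse the factorization diagram of Step~5 of the proof of Proposition~\ref{an4}: for a fixed block $(j,i)$ one has ${\rm id}_1=T_{j,i}\,{\rm id}\,S_{j,i}$ on $\ell_{p_1}^{M_{j,i}}\to\ell_{p_2}^{M_{j,i}}$ with $\|S_{j,i}\|\sim 2^{j\delta+is^\prime}$ and $\|T_{j,i}\|=1$, so that $a_m({\rm id},B_1,B_2)\ge C\,2^{-j\delta-is^\prime}(M_{j,i}-m+1)^{\beta}$. Choosing $m=[M_{j,i}/2]$ makes $(M_{j,i}-m+1)^\beta\sim M_{j,i}^{\beta}$, and I then specialize to the extremal block families: the cubes adjacent to $U$ with $i=0$, where $M_{j,0}\sim 2^{jd}$ (giving exponent $\tfrac\delta d-\beta$), and the cubes with $j=0$, where $M_{0,i}\sim 2^{in}$ (giving exponent $\tfrac{s^\prime}n-\beta$), via Lemma~\ref{Nji} and (\ref{rankpiecelow}). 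Taking the slower-decaying of the two realizes $\kappa$, and monotonicity of the approximation numbers upgrades the estimate along $m=[M_{j,i}/2]$ to $a_k\succeq k^{-\varkappa}$ for every $k$.

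The upper bound is the hard part. A naive operator-ideal estimate with $1/r=\varkappa$ fails, because on a block $L^{(a)}_{r,\infty}(P_{j,i})\sim 2^{-j\delta-is^\prime}M_{j,i}^{\kappa}$ carries the full rate $\kappa$, hence has no decay in the critical direction and the double series diverges. The remedy, exactly as in Step~4 of Proposition~\ref{an4}, is that the \emph{operator norm} $\|P_{j,i}\|\sim 2^{-j\delta-is^\prime}M_{j,i}^{\beta}$ carries only the smaller rate $\beta<\kappa$: since $\beta<\kappa$ forces $\delta-d\beta>0$ and $s^\prime-n\beta>0$, the norm \emph{does} decay along the critical direction, the surplus being precisely $\varkappa=\kappa-\beta$. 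I would therefore use the diagonal six-part splitting (\ref{PQ6}), treating the two geometries $i<j$ and $i\ge j$ of Lemma~\ref{Nji} separately with their own thresholds $M_1<M_2$ and $M_3<M_4$ of size $\sim\log_2 k$: on the innermost parts assign full rank $k_{j,i}=M_{j,i}+1$ (zero contribution) while keeping $\sum k_{j,i}\preceq k$; on the outermost parts bound each $P_{j,i}$ by $\|P_{j,i}\|$ and sum the resulting geometric series; and in the intermediate parts assign a partial rank $k_{j,i}$ (the analogue of the choice $k_{j,i}=[k^{1-\varepsilon}2^{iz_1}2^{jz_2}]$ there) so that the leftover dimension is governed by $(M_{j,i}-k_{j,i}+1)^\beta$ from Lemma~\ref{an2} and the rank budget balances against the tails at the common order $k^{-\varkappa}$.

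I expect the genuine difficulty to lie entirely in this intermediate allocation: one must tune the exponents in $k_{j,i}$ so that simultaneously $\sum k_{j,i}\preceq k$ and the $\rho$-quasi-sum of the block contributions is $\preceq k^{-\varkappa}$, and verify this is consistent in every parameter sub-regime ($2d<n$, $2d>n$, $2d=n$, and both orderings of $\tfrac\delta d$ and $\tfrac{s^\prime}n$), since the two geometries of Lemma~\ref{Nji} force separate thresholds. The pleasant feature that makes this work, and that simultaneously avoids any separate discussion of the borderline case $\tfrac\delta d=\tfrac{s^\prime}n$, is that all decay is produced by the operator norm at rate $\beta$ against a rank budget calibrated to rate $\kappa$, the difference being exactly the asserted exponent $\varkappa=\kappa-\beta>0$. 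Matching this with the lower bound of the second paragraph then yields $a_k\sim k^{-\varkappa}$.
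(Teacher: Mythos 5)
Your lower bound coincides with the paper's: factorizing through $S_{j,i}$ and $T_{j,i}$, taking $m=[M_{j,i}/2]$ so that Lemma~\ref{an2} gives $(M_{j,i}-m+1)^{\beta}\sim M_{j,i}^{\beta}$, and running this over the two extremal block families is precisely how the paper argues (Step~5 of Proposition~\ref{an4} with Lemma~\ref{an2} in place of Lemma~\ref{an1}). The genuine gap is in your upper bound. You propose the six-part diagonal splitting (\ref{PQ6}) of Step~4, but that scheme is powered by the polynomial decay $a_k({\rm id},\ell_{p_1}^N,\ell_{p_2}^N)\sim N^{1/t}k^{-1/2}$ available when $p_1<2<p_2$: a partial rank $k_{j,i}\ll M_{j,i}$ already buys a factor $k_{j,i}^{-1/2}$. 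Here, with $p_2<p_1$, Lemma~\ref{an2} gives $a_k({\rm id},\ell_{p_1}^N,\ell_{p_2}^N)=(N-k+1)^{\beta}$ with your $\beta=\frac1{p_2}-\frac1{p_1}$, which stays $\sim N^{\beta}$ until $k$ is a fixed fraction of $N$. Hence your intermediate tier degenerates: either $k_{j,i}=o(M_{j,i})$ and $(M_{j,i}-k_{j,i}+1)^{\beta}$ gains nothing over $\|P_{j,i}\|$, or $k_{j,i}\sim M_{j,i}$ and $\sum k_{j,i}$ exhausts the rank budget, forcing $M_2=M_1+O(1)$. What survives is a two-tier diagonal splitting estimated by operator norms, and that is not sharp: for instance when $2d<n$, $\kappa=\delta/d$ and $\delta-s^\prime<(2d-n)\beta$, the inner rank along the diagonals is $\sim 2^{nM_1/2}$ while the tail $\sum_{i+j>M_1,\,i<j}\|P_{j,i}\|$ is $\sim 2^{-M_1(\delta-d\beta)}$, yielding only $k^{-2(\delta-d\beta)/n}$, which is strictly worse than the claimed $k^{-(\delta/d-\beta)}$ precisely because $2d<n$.

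The paper's upper bound takes a different route, and it supplies exactly what your operator-norm-only treatment of the outer region lacks. It uses the non-diagonal five-part splitting (\ref{PQ5}) of Step~3 (following Theorem~6 of \cite{LS11}) together with the operator ideal quasi-norms: Lemma~\ref{an2} gives $L_{s,\infty}^{(a)}({\rm id},\ell_{p_1}^{M_{j,i}},\ell_{p_2}^{M_{j,i}})\le C\,M_{j,i}^{1/s-1/p_1+1/p_2}$ for every $s>0$, these are summed over the outer blocks via the quasi-triangle inequality (\ref{idealsinq}), and the inner blocks are killed by their finite rank. The resulting bound for the outer parts is $k^{-1/s}$ times a quantity decaying geometrically in the cutoff $M$; this extra factor $k^{-1/s}$, produced by the ideal norm rather than by the operator norm, is what closes the gap between $2(\delta-d\beta)/n$ and $\delta/d-\beta$. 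You should replace your third and fourth paragraphs by this computation.
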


\begin{proof}
The proof of the upper bound may be again finished as in the proof
of Theorem 6 in \cite{LS11} with $1/r-1/p$ replaced by
$1/s-1/{p_1}+1/{p_2}$, where the former notations follow from
\cite{LS11}. Indeed, in terms of Lemma \ref{an2}, we observe that
for any $s>0$, \vspace{-0.2cm}
\begin{equation}
 L_{s,\infty}^{(a)}({\rm id}, \ell_{p_1}^{M_{j,i}},
 \ell_{p_2}^{M_{j,i}})\leq
C
\begin{cases}
2^{(in+d(j-i))(\frac 1s-\frac 1{p_1}+\frac 1{p_2})},\ \ \  & 0\le i<j,\\
2^{in(\frac 1s-\frac 1{p_1}+\frac 1{p_2})}, & 0\le j\le i.
\end{cases}
\end{equation}
In the estimate from below, once more we follow the proof of Step 5
of Proposition \ref{an4}. In order to guarantee the compactness of
the embeddings, here we only need to consider two cases,
 $\frac 1{p^\ast}<\frac\delta d\le\frac {s^\prime}n$\
 or $\frac 1{p^\ast}<\frac {s^\prime}n<
\frac\delta d$. And we choose $m=\big[\frac N2\big]$ where $N$ is
taken in the same way as in point (a) or (b), respectively, now
using Lemma \ref{an2} instead of Lemma \ref{an1}.
 \end{proof}
\subsection{Kolmogorov numbers of sequence spaces}\label{kns}

Now, we turn our attention to Kolmogorov numbers. To begin with, we
shall collect some information on estimates for the Euclidean ball.
Lemma \ref{kn1} follows trivially from results of Gluskin
\cite{Gl83} and Edmunds and Triebel \cite{ET96}.

\begin{lemma}\label{kn1} Let $N\in\mathbb{N}$.\vspace{-0.2cm}
\begin{enumerate}
\item[{\rm (i)}]\ If $1\le p_1\le p_2\le 2$ and $k\le\frac{N}{4}$, then
$$d_{k}\left({\rm id}, \ell_{p_1}^N, \ell_{p_2}^N\right)\sim 1.$$
\vspace{-0.6cm}

\item[{\rm (ii)}]\ If $1\le p_1 < 2 < p_2< \infty$ and $k\le\frac{N}{4}$,\,
then
$$d_{k}\left({\rm id}, \ell_{p_1}^N, \ell_{p_2}^N\right)\sim \min\{1,N^{\frac 1{p_2}}k^{-\frac 1{2}}\}.$$
\vspace{-0.6cm}

\item[{\rm (iii)}]\ If $2<p_1 = p_2\le \infty$ and $k\le N$, then
$$d_k\left({\rm id}, \ell_{p_1}^N, \ell_{p_2}^N\right)\sim 1.$$
\vspace{-0.6cm}

\item[{\rm (ii)}]\ If $2 \le p_1 < p_2 < \infty$\,  and $k\le N$, then
$$d_{k}\left({\rm id}, \ell_{p_1}^N, \ell_{p_2}^N\right)
\sim \xi^\theta,$$ where $\xi=\min\{1,N^{\frac 1{p_2}}k^{-\frac
1{2}}\},\,\theta = \frac{1/{p_1}-1/{p_2}}{1/2-1/{p_2}}.$
\end{enumerate}
\end{lemma}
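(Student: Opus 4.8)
The quantity $d_k(\mathrm{id},\ell_{p_1}^N,\ell_{p_2}^N)$ is the Kolmogorov width of the ball $B_{p_1}^N$ measured in the norm of $\ell_{p_2}^N$, and the four regimes in the statement correspond exactly to the four possible positions of the pair $(p_1,p_2)$ relative to the exponent $2$. My plan is to dispose of the two ``subcritical'' regimes ($1\le p_1\le p_2\le2$ and $2<p_1=p_2\le\infty$) by elementary arguments, and to reduce the two ``critical'' regimes ($1\le p_1<2<p_2<\infty$ and $2\le p_1<p_2<\infty$) to the sharp width estimates for the Euclidean ball due to Kashin and Gluskin, which is where the reference to \cite{Gl83,ET96} enters. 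Throughout I use that $d_k$ is a multiplicative $s$-number, see (PS3), together with the trivial bound $d_k\le\|\mathrm{id}\|$ from (PS1) and the comparison $d_k\le a_k$ in \eqref{acd}.

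I would treat $2<p_1=p_2=p\le\infty$ first, since it is purely formal: by the definition of $d_k$, for any subspace $L\subsetneq\ell_p^N$ of dimension $<k\le N$ the composition $Q_L^{\ell_p^N}\circ\mathrm{id}$ is just the metric surjection onto $\ell_p^N/L$, whose norm equals $1$; hence the infimum defining $d_k$ is exactly $1$, with no recourse to deep estimates. For $1\le p_1\le p_2\le2$ the upper bound is again $d_k\le\|\mathrm{id}:\ell_{p_1}^N\to\ell_{p_2}^N\|=1$. For the matching lower bound I would test on the coordinate vectors $e_i\in B_{p_1}^N$: since $p_2\le2$ gives $\|\cdot\|_{p_2}\ge\|\cdot\|_2$, one has $\mathrm{dist}_{p_2}(e_i,L)\ge\mathrm{dist}_2(e_i,L)$, and the trace identity $\sum_{i=1}^N\mathrm{dist}_2(e_i,L)^2=N-\dim L\ge N-k+1$ forces $\max_i\mathrm{dist}_{p_2}(e_i,L)\ge\sqrt{1-(k-1)/N}$; for $k\le N/4$ this is bounded below by an absolute constant, so $d_k\sim1$.

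For $1\le p_1<2<p_2<\infty$ I would get the upper bound by factorising through $\ell_2^N$: since $p_1<2$ gives $\|\mathrm{id}:\ell_{p_1}^N\to\ell_2^N\|=1$, multiplicativity yields $d_k(\mathrm{id},\ell_{p_1}^N,\ell_{p_2}^N)\le d_k(\mathrm{id},\ell_2^N,\ell_{p_2}^N)$, and the latter is $\sim\min\{1,N^{1/p_2}k^{-1/2}\}$ by the Kashin--Gluskin estimate for the width of the Euclidean ball in $\ell_{p_2}$, $2<p_2<\infty$ (the hypothesis $p_2\neq\infty$ is essential, as a logarithmic factor enters at $p_2=\infty$). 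For the lower bound I would use $B_1^N\subseteq B_{p_1}^N$ (since $p_1\ge1$), whence $d_k(\mathrm{id},\ell_{p_1}^N,\ell_{p_2}^N)\ge d_k(\mathrm{id},\ell_1^N,\ell_{p_2}^N)$, and quote Gluskin's lower estimate $d_k(\mathrm{id},\ell_1^N,\ell_{p_2}^N)\succeq\min\{1,N^{1/p_2}k^{-1/2}\}$. Finally, for $2\le p_1<p_2<\infty$ the exponent is precisely the interpolation parameter determined by $1/p_1=\theta/2+(1-\theta)/p_2$, i.e.\ $\ell_{p_1}^N=[\ell_{p_2}^N,\ell_2^N]_\theta$ with constants independent of $N$; I would obtain $d_k\preceq\xi^\theta$ by interpolating the source between $\mathrm{id}:\ell_{p_2}^N\to\ell_{p_2}^N$ (width $1$) and $\mathrm{id}:\ell_2^N\to\ell_{p_2}^N$ (width $\sim\xi$), and the matching lower bound $\xi^\theta$ is Gluskin's theorem. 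One checks the formula degenerates correctly: $\theta=1$ at $p_1=2$ and $\theta\to0$ as $p_1\to p_2$, recovering the Euclidean and diagonal cases.

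The genuinely hard content is entirely concentrated in the sharp lower bounds of the two critical regimes, that is, Gluskin's volumetric and random-subspace estimates for $d_k(\mathrm{id},\ell_1^N,\ell_{p_2}^N)$ and for $d_k(\mathrm{id},\ell_{p_1}^N,\ell_{p_2}^N)$ with $2\le p_1<p_2$; these are not self-contained and I would simply invoke \cite{Gl83,ET96}. The only other delicate point is securing the interpolation identity $\ell_{p_1}^N=[\ell_{p_2}^N,\ell_2^N]_\theta$ with norm constants uniform in $N$, which is what guarantees that the factor $\xi^\theta$ carries no hidden $N$-dependence and keeps the equivalence constants in $d_k\sim\xi^\theta$ dimension-free.
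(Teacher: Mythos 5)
The paper itself offers no proof of Lemma \ref{kn1}: it is introduced only with the remark that it ``follows trivially from results of Gluskin \cite{Gl83} and Edmunds and Triebel \cite{ET96}'', so the comparison here is with a bare citation. Your proposal is more self-contained and, for the most part, correct: the Riesz-lemma argument for $2<p_1=p_2\le\infty$, the trace identity $\sum_i\mathrm{dist}_2(e_i,L)^2=N-\dim L$ giving the lower bound for $1\le p_1\le p_2\le 2$ with $k\le N/4$, the factorisation through $\ell_2^N$ for the upper bound when $1\le p_1<2<p_2<\infty$, and the inclusion $B_1^N\subseteq B_{p_1}^N$ reducing the corresponding lower bound to Gluskin's estimate for the octahedron are all standard and sound. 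What this buys is a reduction of the entire lemma to the two sharp Euclidean-ball/octahedron width estimates, rather than quoting the finite-dimensional width tables wholesale.

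The one step that does not go through as written is the upper bound in the regime $2\le p_1<p_2<\infty$ ``by interpolating the source''. Kolmogorov numbers are not known to satisfy a multiplicative interpolation inequality with respect to the source space on the strength of the complex-interpolation identity $\ell_{p_1}^N=[\ell_{p_2}^N,\ell_2^N]_\theta$ alone: the off-the-shelf interpolation properties of $d_k$ concern the target space (and those of $c_k$ the source), and the abstract source-interpolation results that do exist require $J$-method hypotheses on the unit ball that you would still have to verify. The standard repair is elementary and effectively a real-interpolation (truncation) argument: for $x\in B_{p_1}^N$ and a threshold $\lambda>0$, split $x$ into its coordinates of modulus $>\lambda$ and $\le\lambda$ to get $B_{p_1}^N\subseteq \lambda^{1-p_1/2}B_2^N+\lambda^{1-p_1/p_2}B_{p_2}^N$; additivity of widths then gives $d_k\preceq\lambda^{1-p_1/2}\xi+\lambda^{1-p_1/p_2}$, and optimising in $\lambda$ yields exactly $\xi^\theta$ with $\theta=\frac{1/p_1-1/p_2}{1/2-1/p_2}$. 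Alternatively one cites Gluskin and Edmunds--Triebel for the full two-sided estimate in this regime, which is precisely what the paper does.
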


The following lemma are a simply corollary of Lemma \ref{kn1}. And
the proof can be finished in the same manner as in the proof of
Lemma 10 in Skrzypczak \cite{Sk05}.
\begin{lemma}\label{kn3}  Suppose $1\le p_1 < 2 < p_2< \infty$. Then
there is a positive constant $C$ independent of $N$ and $k$ such
that
\begin{equation}\label{klp2p}
d_{k}\left({\rm id},\ell_{p_1}^N,\ell_{p_2}^N\right)\leq C\, \left\{
\begin{array}{ll}
1 & {\rm if}\, k\leq N^{\frac 2{p_2}},\\
N^{\frac 1{p_2}}k^{-\frac 1{2}}\quad & {\rm if}\, N^{\frac
2{p_2}}<k\leq
N,\\
0 & {\rm if}\, k>N.
\end{array}\right.
\end{equation}
\end{lemma}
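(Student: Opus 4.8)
The plan is to read off the three-regime bound directly from the sharp two-sided estimate in Lemma~\ref{kn1}(ii), combined with the monotonicity property {\rm(PS1)} and the rank property {\rm(PS4)} of the Kolmogorov numbers, exactly in the spirit of Lemma~10 in Skrzypczak~\cite{Sk05}. First I would dispose of the two extreme regimes, which require no hard work. For $k>N$ the operator ${\rm id}$ has rank $N<k$, so {\rm(PS4)} immediately gives $d_k({\rm id},\ell_{p_1}^N,\ell_{p_2}^N)=0$, which is the third line of (\ref{klp2p}). For $k\le N^{2/p_2}$ I would use monotonicity: since $p_1<p_2$ one has $\|x\,|\,\ell_{p_2}^N\|\le\|x\,|\,\ell_{p_1}^N\|$, hence $\|{\rm id}:\ell_{p_1}^N\to\ell_{p_2}^N\|=1$, and {\rm(PS1)} yields $d_k\le d_1=\|{\rm id}\|=1$, giving the first line of (\ref{klp2p}) with $C=1$.

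The substantive band $N^{2/p_2}<k\le N$ I would split at $k=[N/4]$. On the range $N^{2/p_2}<k\le N/4$ Lemma~\ref{kn1}(ii) applies directly and gives $d_k\sim\min\{1,N^{1/p_2}k^{-1/2}\}$; since $k>N^{2/p_2}$ forces $N^{1/p_2}k^{-1/2}<1$, the minimum equals $N^{1/p_2}k^{-1/2}$, which is precisely the second line of (\ref{klp2p}). The remaining band $N/4<k\le N$ is the only place where Lemma~\ref{kn1}(ii) is not literally available (it is stated for $k\le N/4$), and here I would avoid any new construction and argue purely by monotonicity. By {\rm(PS1)}, $d_k\le d_{[N/4]}$, and the step just carried out (or Lemma~\ref{kn1}(ii) evaluated at $[N/4]$) gives $d_{[N/4]}\le C\,N^{1/p_2}[N/4]^{-1/2}\le C'\,N^{1/p_2-1/2}$. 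Since $k\le N$ implies $k^{-1/2}\ge N^{-1/2}$, we have $N^{1/p_2-1/2}\le N^{1/p_2}k^{-1/2}$, whence $d_k\le C'\,N^{1/p_2}k^{-1/2}$, as required.

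The one point deserving attention is the uniformity of the constant in both $N$ and $k$. To secure it I would observe that $p_2>2$ makes $2/p_2<1$, so that $N^{2/p_2}<[N/4]$ for all sufficiently large $N$; this guarantees both that the intermediate regime is nonempty and that evaluating Lemma~\ref{kn1}(ii) at $[N/4]$ is legitimate, while the finitely many small values of $N$ are absorbed into $C$ via {\rm(PS1)}. I do not expect any genuine obstacle here, which is exactly why the statement is a corollary of Lemma~\ref{kn1}: the only conceptual input is that the sharp range $k\le N/4$ coming from the Gluskin/Edmunds--Triebel-type estimates can be pushed all the way to $k\le N$ by monotonicity alone, the decay $k^{-1/2}$ being essentially constant on the window $(N/4,N]$ and hence comparable to its value $N^{-1/2}$ at the endpoint.
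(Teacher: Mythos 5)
Your proposal is correct and follows essentially the same route as the paper, which dispatches the lemma as a corollary of Lemma~\ref{kn1}(ii) via the argument of Lemma~10 in \cite{Sk05}: the sharp estimate on $k\le N/4$ plus the rank property for $k>N$ and monotonicity (PS1) to cover $k\le N^{2/p_2}$ and to push the middle regime from $k\le N/4$ up to $k\le N$. Your handling of the window $(N/4,N]$ and of the uniformity of the constant for small $N$ is exactly the intended filling-in of the details the paper leaves to the reader.
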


Now, we go on to make preparations for the estimates of Kolmogorov
numbers of related embeddings for the quasi-Banach case with\
$0<p_1< 1$\ or\ $0<p_2< 1,$\ and for several cases left over when\
$p_2=\infty$. The following result, Lemma \ref{knify}, is due to
Kashin \cite{Ka77}, Garnaev and Gluskin \cite{GG84} and Vyb\'iral
\cite{Vy08}, cf. also \cite{LGM96}.

\beg{lemma}\label{knify}
Let $N\in\mathbb{N}$.\vspace{-0.2cm}
\beg{enumerate}
\item[$(i)$]\ If~ $1\le p < 2$ and $k\le\frac{N}{4}$,\ then
$$k^{-1/2}\preceq  d_{k}\left({\rm id}, \ell_{p}^N, \ell_{\infty}^N\right)\preceq
 k^{-1/2}\Big(\log\big(\frac{eN}{k}\big)\Big)^{3/2}.$$\vspace{-0.6cm}
\item[$(ii)$]\ If~ $2 \le p < \infty$\, and $ k\le
N$, then
$$\frac 14\min\Big\{1,\Big(c_1\frac{\log(1+\frac N{k-1})}{k-1}\Big)^{1/p}\Big\}\le
 d_{k}\left({\rm id}, \ell_{p}^N, \ell_{\infty}^N\right)\le
\min\Big\{1,\Big(c_2\frac{\log(1+\frac
N{k-1})}{k-1}\Big)^{1/p}\Big\}$$ are valid for certain absolute
constants $c_1>0$\ and\  $c_2>0$.
\item[$(iii)$]\ If~ $0 < p_1<1,\ p_1 <p_2 \le \infty$\, and\ $ k\le
N$, then
$$d_{k}\big({\rm id}, \ell_{p_1}^N,
\ell_{p_2}^N\big)=d_{k}\big({\rm id}, \ell_{\min(1,p_2)}^N,
\ell_{p_2}^N\big).$$
\end{enumerate}
\end{lemma}
The following lemma is a simple corollary of Lemma \ref{knify}. And
the proof is similar to that of Lemma 10 in \cite{Sk05}.
\beg{lemma}\label{knifyupp12}
 \ Let $1\le p_1 < 2$\
and\ $N=1, 2, 3, \ldots.$\ Then there is a positive constant $C$
independent of $N$ and $k$ such that \beq \label{kn12}
 d_{k}\left({\rm id}, \ell_{p_1}^N,
\ell_{\infty}^N\right)\le C\ \beg{cases}
 k^{-1/2}\Big(\log\big(\frac{4eN}{k}\big)\Big)^{3/2}\, ~ &{\rm if}\ 0<k\le
 N,\\0 &{\rm if}\ k>N.
\end{cases}
\enq
\end{lemma}

The following estimate is due to Vyb\'iral \cite{Vy08}.
 \beg{lemma}\label{kn021ify}
If~ $0<p_2\le p_1\le\infty,$ then there is a constant $c,\ 0<c\le
1,$\ such that
$$
d_{[ck]+1}\big({\rm id}, \ell_{p_1}^{2k},
\ell_{p_2}^{2k}\big)\succeq n^{1/{p_2}-1/{p_1}},\quad k\in
\mathbb{N}.
$$
\end{lemma}

Now we are ready to deal with the Kolmogorov numbers of embeddings
of related sequence spaces in the quasi-Banach setting, $0< p, q\le
\infty$.

\begin{prop}\label{kn4}
Suppose $0< p_1 < 2 < p_2 \le \infty$.
 \beg{enumerate}
\item[$(i)$] If\, $\min\big(\frac\delta d,\frac
{s^\prime}n\big)>\frac 1{p_2}$, then
 \vspace{-0.2cm}
\begin{equation}
d_{k}\Big({\rm id}, \ell_{q_1}(2^{j\delta}\ell_{p_1}(w)),
\ell_{q_2}(\ell_{p_2})\Big) \sim k^{-\min\big(\frac\delta d,\frac
{s^\prime}n\big)
 +\frac 1{p_2}-\frac 12}.
\end{equation}
 \vspace{-0.4cm}
\item[$(ii)$] If $\delta > s^\prime$ and either
 $
\begin{cases}
\delta<\frac d{p_2},\\ \delta-s^\prime<\frac{2d-n}{p_2},
\end{cases}\ \ {\rm or}\ \
\begin{cases}
\delta+s^\prime<\frac n{p_2},\\ \delta-s^\prime>\frac{2d-n}{p_2},
\end{cases} $
then
\begin{equation}
d_{k}\Big({\rm id}, \ell_{q_1}(2^{j\delta}\ell_{p_1}(w)),
\ell_{q_2}(\ell_{p_2})\Big) \sim k^{-\frac
{s^\prime}n\cdot\frac{p_2}2}.
\end{equation}
 \vspace{-0.4cm}
\item[$(iii)$] If\ $\delta<\frac d{p_2},\ \delta<s^\prime<\frac n{p_2}$
and $\delta-s^\prime<\frac{2d-n}{p_2}$, then \vspace{-0.2cm}
\begin{equation}k^{-\frac
{p_2}2\cdot\min(\frac\delta d,\frac {s^\prime}n)}\preceq
d_{k}\Big({\rm id}, \ell_{q_1}(2^{j\delta}\ell_{p_1}(w)),
\ell_{q_2}(\ell_{p_2})\Big)\preceq k^{-\frac {p_2}2
\cdot\min(\frac\delta d,\frac {\delta+s^\prime}{2n})}.
\end{equation}
 \vspace{-0.4cm}
\item[$(iv)$] If\ $\delta<s^\prime,\ \delta+ s^\prime<\frac n{p_2}$ and
$\delta-s^\prime>\frac{2d-n}{p_2}$, then \vspace{-0.2cm}
\begin{equation}k^{-\frac
{p_2}2\cdot\min(\frac\delta d,\frac {s^\prime}n)}\preceq
d_k\Big({\rm id}, \ell_{q_1}(2^{j\delta}\ell_{p_1}(w)),
\ell_{q_2}(\ell_{p_2})\Big)\preceq k^{-\frac
{p_2}n\cdot\min(\delta,\frac {\delta+s^\prime}4)}.
\end{equation}
 \end{enumerate}
\end{prop}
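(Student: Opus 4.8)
The plan is to run the argument in complete parallel with the proof of Proposition \ref{an4}, systematically replacing the approximation numbers by Kolmogorov numbers and the index $t=\min(p_1',p_2)$ by $p_2$ throughout. First I would reuse verbatim the preparations of Step~1: the index sets $I_{j,i}$ from (\ref{Ij0})--(\ref{Iji}), the canonical projections $P_{j,i}$ with ${\rm id}_\Lambda=\sum_{j,i}P_{j,i}$, the weight behaviour $w_j(2^{-j}\ell)\sim 2^{is'}$, and the cardinality bounds $M_{j,i}$ coming from Lemma \ref{Nji} and (\ref{rankpiece}). The finite-dimensional engine is no longer Lemma \ref{an3} but Lemma \ref{kn3} together with Lemma \ref{kn1}(ii); for the quasi-Banach range $0<p_1<1$ I would first invoke Lemma \ref{knify}(iii) to replace $\ell_{p_1}^N$ by $\ell_1^N$ (since $\min(1,p_2)=1$ here) and thereby reduce to the case $p_1=1<2<p_2$ covered by those two lemmas.

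For the \textbf{upper bounds} I would work with the operator ideals $L_{s,\infty}^{(d)}$ exactly as in Step~2--4. The key elementary input is that, by Lemma \ref{kn3}, $L_{2,\infty}^{(d)}({\rm id},\ell_{p_1}^{M_{j,i}},\ell_{p_2}^{M_{j,i}})\le C\,M_{j,i}^{1/p_2}$ and, for $1/s>1/2$ with $1/s=1/r+1/2$, $L_{s,\infty}^{(d)}({\rm id},\ell_{p_1}^{M_{j,i}},\ell_{p_2}^{M_{j,i}})\le C\,M_{j,i}^{1/p_2+1/r}$; these are literally (\ref{idealse2})--(\ref{idealPjis}) with $1/t$ replaced by $1/p_2$. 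Case (i) then follows the five-block decomposition (\ref{PQ5}) as in \cite{LS11}, now using $1/r+1/p_2$ in place of $1/r-1/p$, while cases (ii)--(iv) use the six-block decomposition (\ref{PQ6}) with the \emph{same} choices of $M_1,M_2,M_3,M_4$ and the same auxiliary exponents $z_1,\dots,z_4,\varepsilon$ as in Proposition \ref{an4} (again $t\rightsquigarrow p_2$). The six geometric sums $\triangle_1,\dots,\triangle_6$ are estimated identically; their dominant terms produce $k^{-(\min(\delta/d,s'/n)-1/p_2+1/2)}$ in case (i) and $k^{-\frac{s'}{n}\cdot\frac{p_2}{2}}$, respectively the two-sided bounds, in (ii)--(iv).

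For the \textbf{lower bounds} I would reuse the commutative diagram of Step~5 built from $S_{j,i}$ and $T_{j,i}$, with $\|S_{j,i}\|=2^{j\delta+is'}$ and $\|T_{j,i}\|=1$; multiplicativity of $s$-numbers gives, via (\ref{diagST}), $d_m({\rm id})\ge d_m({\rm id},\ell_{p_1}^{N},\ell_{p_2}^{N})/(\|S_{j,i}\|\,\|T_{j,i}\|)$. Inserting the sharp finite-dimensional estimate $d_m({\rm id},\ell_{p_1}^{N},\ell_{p_2}^{N})\sim N^{1/p_2}m^{-1/2}$ from Lemma \ref{kn1}(ii), I would run the four sub-cases (a)--(d): taking $i=0,\ j\to\infty$ with $N=M_{j,1}\sim 2^{jd}$ when $\delta/d\le s'/n$, and $j=0,\ i\to\infty$ with $N=M_{0,i+1}\sim 2^{in}$ when $s'/n\le\delta/d$; and then choosing $m=[N/4]$ (yielding the exponent $\min+1/2-1/p_2$ when $\min>1/p_2$) or $m=[N^{2/p_2}]$ (yielding $\frac{p_2}{2}\min(\delta/d,s'/n)$ otherwise). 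These match the upper bounds, closing (i)--(iv).

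The one point genuinely new relative to Proposition \ref{an4} — and the place I expect the real care — is the endpoint $p_2=\infty$. Here cases (ii)--(iv) are vacuous, since their hypotheses ($\delta<d/p_2$, $\delta+s'<n/p_2$, etc.) become $\delta<0$, so only case (i) survives. The difficulty is that, unlike approximation numbers, the finite-dimensional Kolmogorov numbers $d_k({\rm id},\ell_{p_1}^{N},\ell_\infty^{N})$ carry a polylogarithmic factor $(\log(4eN/k))^{3/2}$ (Lemmas \ref{knify}(i) and \ref{knifyupp12}). The resolution is that in case (i) one has $\varkappa=\min(\delta/d,s'/n)+1/2-1/p_2>1/2$, so the relevant ideal index $s=1/\varkappa$ satisfies $1/s>1/2$; consequently the supremum defining $L_{s,\infty}^{(d)}({\rm id},\ell_{p_1}^{N},\ell_\infty^{N})=\sup_k k^{1/s}d_k$ is attained for $k$ comparable to $N$, where the logarithm is bounded, and one recovers the clean bound $C\,N^{1/r}$ with $1/r=\varkappa-1/2$. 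Equivalently, a polylogarithmic factor in the block index is dominated by the strict geometric decay of the series in (\ref{PQ5}) and (\ref{idealsinq}), so the order $k^{-\varkappa}$ is preserved; the matching lower bound uses the log-free estimate $d_m({\rm id},\ell_{p_1}^{N},\ell_\infty^{N})\succeq m^{-1/2}$ of Lemma \ref{knify}(i). Apart from this log-absorption, the entire proof is a faithful transcription of Proposition \ref{an4}.
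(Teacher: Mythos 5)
Your proposal is correct and follows essentially the same route as the paper: the paper's proof of Proposition \ref{kn4} likewise splits into the cases $1\le p_1<2<p_2<\infty$ (transcribing Proposition \ref{an4} with Lemma \ref{kn1} and (\ref{klp2p}) in place of Lemma \ref{an1} and (\ref{alp2p}), and $t$ replaced by $p_2$), the endpoint $p_2=\infty$ (using Lemma \ref{knify}(i) and (\ref{kn12}), with the ideal bound $L_{s,\infty}^{(d)}({\rm id},\ell_{p_1}^N,\ell_\infty^N)\le CN^{1/s-1/2}$ for $1/s>1/2$ absorbing the logarithm exactly as you argue), and the reduction of $0<p_1<1$ to $p_1=1$ via Lemma \ref{knify}(iii). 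Your treatment of the log-factor and of the vacuity of (ii)--(iv) at $p_2=\infty$ matches the paper's, only spelled out in more detail.
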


\begin{proof}
We consider three cases according to the distributions of $p_1$\
and\ $p_2$. Note that points (ii)-(iv) vanish when $p_2=\infty$.

Case 1.\ Assume $1\le  p_1 < 2 < p_2 < \infty$. The proof of the
proposition can be finished in the same manner as in the proof of
Proposition \ref{an4}, with Lemma \ref{an1} and (\ref{alp2p})
replaced by Lemma \ref{kn1} and (\ref{klp2p}), respectively. The
only change is that $t=\min(p_1^\prime,p_2)$ is replaced by $p_2$ in
this case.

Case 2.\ Assume $1\le  p_1 < 2$ and $p_2=\infty$. We proceed as
above, now using Lemma \ref{knify} (i) and (\ref{kn12}) instead of
Lemma \ref{kn1} and (\ref{klp2p}), respectively. Related
computations of ideal quasi-norms herein are similar to the
counterpart of entropy numbers, cf. \cite{ET96,KLSS06b}. Indeed,
$$
L_{s,\infty}^{(d)}({\rm id}, \ell_{p_1}^N, \ell_\infty^N)\leq C
N^{\frac 1s-\frac 12},\ \ \ \frac 1s>\frac 12,\ \ N\in\mathbb{N}.
$$
 Moreover,
in the estimate of lower bounds, because of $p_2=\infty$, we only
need to consider two cases, $0<\frac\delta d\le\frac {s^\prime}n$\
 or $0<\frac {s^\prime}n<
\frac\delta d$, in the same way as in point (a) or (b) of Step 5 in
the proof of Proposition \ref{an4}, respectively, and take
$m=\left[\frac N4\right]$ in both cases based on Lemma \ref{knify}
(i).

Case 3.\ Assume $0<p_1<1$ and $2<p_2\le\infty$. We first transform
the problem of this case to the above two cases (i.e., Case 1 for
$p_2<\infty$, Case 2 for $p_2=\infty$), by virtue of Lemma
\ref{knify} (iii), and follow trivially them respectively. Note that
the exact upper estimate here may also be provided by the
corresponding statement about approximation numbers, cf. Proposition
\ref{an4} and (\ref{acd}).

\end{proof}

\begin{prop}\label{kn5}
Suppose $ 2\le p_1 < p_2 \le \infty$. We set $\theta =
\frac{1/{p_1}-1/{p_2}}{1/2-1/{p_2}}$.
 \beg{enumerate}
\item[$(i)$] If\, $\min\big(\frac\delta d,\frac
{s^\prime}n\big)>\frac \theta{p_2}$, then
 \vspace{-0.2cm}
\begin{equation}
d_{k}\Big({\rm id}, \ell_{q_1}(2^{j\delta}\ell_{p_1}(w)),
\ell_{q_2}(\ell_{p_2})\Big) \sim k^{-\min\big(\frac\delta d,\frac
{s^\prime}n\big)
 -\frac 1{p_1}+\frac 1{p_2}}.
\end{equation}
 \vspace{-0.4cm}
\item[$(ii)$] If $\delta > s^\prime$ and either
 $
\begin{cases}
\delta<\frac d{p_2}\theta,\\ \delta-s^\prime<\frac{2d-n}{p_2}\theta,
\end{cases}\ \ {\rm or}\ \
\begin{cases}
\delta+s^\prime<\frac n{p_2}\theta,\\
\delta-s^\prime>\frac{2d-n}{p_2}\theta,
\end{cases} $
then
\begin{equation}
d_{k}\Big({\rm id}, \ell_{q_1}(2^{j\delta}\ell_{p_1}(w)),
\ell_{q_2}(\ell_{p_2})\Big) \sim k^{-\frac
{s^\prime}n\cdot\frac{p_2}2}.
\end{equation}
 \vspace{-0.4cm}
\item[$(iii)$] If\ $\delta<\frac d{p_2}\theta,\ \delta<s^\prime<\frac n{p_2}\theta$
and $\delta-s^\prime<\frac{2d-n}{p_2}\theta$, then \vspace{-0.2cm}
\begin{equation}k^{-\frac
{p_2}2\cdot\min(\frac\delta d,\frac {s^\prime}n)}\preceq
d_{k}\Big({\rm id}, \ell_{q_1}(2^{j\delta}\ell_{p_1}(w)),
\ell_{q_2}(\ell_{p_2})\Big)\preceq k^{-\frac {p_2}2
\cdot\min(\frac\delta d,\frac {\delta+s^\prime}{2n})}.
\end{equation}
 \vspace{-0.4cm}
\item[$(iv)$] If\ $\delta<s^\prime,\ \delta+ s^\prime<\frac n{p_2}\theta$ and
$\delta-s^\prime>\frac{2d-n}{p_2}\theta$, then \vspace{-0.2cm}
\begin{equation}k^{-\frac
{p_2}2\cdot\min(\frac\delta d,\frac {s^\prime}n)}\preceq
d_k\Big({\rm id}, \ell_{q_1}(2^{j\delta}\ell_{p_1}(w)),
\ell_{q_2}(\ell_{p_2})\Big)\preceq k^{-\frac
{p_2}n\cdot\min(\delta,\frac {\delta+s^\prime}4)}.
\end{equation}
 \end{enumerate}
\end{prop}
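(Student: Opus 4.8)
The plan is to run the decomposition-plus-operator-ideal scheme of Proposition \ref{an4} (equivalently \ref{kn4}), but fed by the Kolmogorov widths of the Euclidean ball rather than the approximation ones. The driving estimate is Lemma \ref{kn1}(iv): for $2\le p_1<p_2<\infty$ we have $d_k({\rm id},\ell_{p_1}^N,\ell_{p_2}^N)\sim\min\{1,N^{\theta/p_2}k^{-\theta/2}\}$ when $k\le N$, and $d_k=0$ for $k>N$ by the rank property. Hence the exponent pair $(1/t,1/2)$ that governs Proposition \ref{an4} is replaced throughout by $(\alpha,\beta):=(\theta/p_2,\theta/2)$. Two identities make the bookkeeping transparent: $\beta-\alpha=\theta(\tfrac12-\tfrac1{p_2})=\tfrac1{p_1}-\tfrac1{p_2}$ and $\beta/\alpha=p_2/2$, so the flat/decaying transition sits at $k\sim N^{\alpha/\beta}=N^{2/p_2}$, just as for Proposition \ref{kn4}. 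The outer indices $q_1,q_2$ play no essential role, being absorbed by the $\rho$-triangle inequality (\ref{idealsinq}) exactly as before.

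First I would set up the ideals with pivot $L^{(d)}_{2/\theta,\infty}$ in place of $L^{(a)}_{2,\infty}$, and the relation $1/s=1/r+\theta/2$ in place of $1/s=1/r+1/2$. Using (\ref{rankpiece}) this produces the analogues of (\ref{idealPji2}) and (\ref{idealPjis}):
\begin{equation*}
L^{(d)}_{2/\theta,\infty}(P_{j,i})\le C\,2^{-j\delta}2^{-is^\prime}M_{j,i}^{\theta/p_2},\qquad
L^{(d)}_{s,\infty}(P_{j,i})\le C\,2^{-j\delta}2^{-is^\prime}M_{j,i}^{\,\theta/p_2+1/r}.
\end{equation*}
The upper bound in (i) then follows by copying Step 3 of Proposition \ref{an4} with $1/r+1/t$ replaced by $1/r+\theta/p_2$; the summability is guaranteed precisely by $\min(\tfrac\delta d,\tfrac{s^\prime}n)>\alpha=\theta/p_2$. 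For (ii)--(iv) I would reuse the finer splitting (\ref{PQ6}) together with the choices $k_{j,i}=[k^{1-\varepsilon}2^{iz_1}2^{jz_2}]$ (and $2^{iz_3}2^{jz_4}$ for $i\ge j$). The admissibility of positive $z_\bullet,\varepsilon$ now reduces, after the substitution $1/t\mapsto\alpha$, to $\delta<d\alpha$ and $\delta-s^\prime<(2d-n)\alpha$ (resp. the second pair); since $d\alpha=d\theta/p_2$, $(2d-n)\alpha=(2d-n)\theta/p_2$ and $n\alpha=n\theta/p_2$, the side conditions displayed in (ii)--(iv) come out verbatim, and the resulting upper bounds are $k^{-\frac{s^\prime}n\cdot\frac{p_2}2}$ in (ii) and the two non-matching gap exponents in (iii),(iv), exactly as in Proposition \ref{an4}(ii)--(iv).

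For the lower bounds I would use the commutative diagram of Step 5 of Proposition \ref{an4}, with $\|S_{j,i}\|\sim2^{j\delta+is^\prime}$, $\|T_{j,i}\|=1$, and Lemma \ref{kn1}(iv). In case (i) I take $m=[N/4]$ along the pieces $N\sim2^{jd}$ (when $\tfrac\delta d\le\tfrac{s^\prime}n$) or $N\sim2^{in}$ (when $\tfrac{s^\prime}n<\tfrac\delta d$); since then $d_m({\rm id}_1)\sim N^{\theta/p_2}m^{-\theta/2}\sim N^{\alpha-\beta}$, monotonicity yields $d_k\succeq k^{-(\min(\delta/d,s^\prime/n)+\beta-\alpha)}=k^{-(\min(\delta/d,s^\prime/n)+1/p_1-1/p_2)}$. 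For (ii) and the lower halves of (iii),(iv) I instead take $m=[N^{2/p_2}]$, the transition point where $d_m({\rm id}_1)\sim1$; from $2^{in}=m^{p_2/2}$ (resp. $2^{jd}=m^{p_2/2}$) this gives $d_k\succeq k^{-\frac{s^\prime}n\cdot\frac{p_2}2}$ and $k^{-\frac{p_2}2\min(\delta/d,s^\prime/n)}$, matching the displayed estimates.

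Finally, the case $p_2=\infty$ (so $\theta=2/p_1$, $\alpha=0$, $\beta=1/p_1$) must be handled separately, using Lemma \ref{knify}(ii) in place of Lemma \ref{kn1}(iv). Because $\alpha=0<\min(\tfrac\delta d,\tfrac{s^\prime}n)$ always holds, only (i) survives, with $\varkappa=\min(\tfrac\delta d,\tfrac{s^\prime}n)+1/p_1$. The one genuinely new technical point, which I expect to be the main obstacle, is the logarithm in Lemma \ref{knify}(ii): to neutralise it I would argue as in Case 2 of Proposition \ref{kn4}, working with a strictly subcritical index $1/s>1/p_1$ so that $L^{(d)}_{s,\infty}({\rm id},\ell_{p_1}^N,\ell_\infty^N)\le C\,N^{1/s-1/p_1}$ (the logarithm being harmless near $k\sim N$), and taking $m=[N/4]$ for the matching lower bound so that $\log(1+N/m)$ stays bounded. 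With this in hand all sums telescope as in Proposition \ref{an4} and the stated estimates follow.
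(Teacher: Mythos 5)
Your proposal is correct and follows essentially the same route as the paper: Case $p_2<\infty$ is handled by rerunning the machinery of Proposition \ref{an4} with the exponent pair $(1/t,1/2)$ replaced by $(\theta/p_2,\theta/2)$ (the paper writes this as $\tau=p_2/\theta$, $h=2/\theta$ and the pivot ideal $L^{(d)}_{h,\infty}$), with lower bounds from the commutative diagram at $m\sim N/4$ resp. the transition point $m\sim N^{2/p_2}$, and Case $p_2=\infty$ is treated separately via Lemma \ref{knify}(ii) with $L^{(d)}_{s,\infty}({\rm id},\ell_{p_1}^N,\ell_\infty^N)\le CN^{1/s-1/p_1}$ for $1/s>1/p_1$, exactly as in the paper. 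The only cosmetic difference is your choice $m=[N/4]$ instead of the paper's $m=N$ in the $p_2=\infty$ lower bound, which yields the same asymptotics.
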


\begin{proof}
We consider two cases as follows. Note that points (ii)-(iv) vanish
when $p_2=\infty$.

Case 1.\ Assume  $2\le  p_1 < p_2 < \infty$. We only sketch the
proof since, once more, we can use the similar reasoning. To shorten
notations we shall put $\tau=\frac {p_2}\theta,\,h=\frac 2\theta$
and $\frac 1s=\frac 1\gamma+\frac 1h$\, for any $s>0$. These simple
transformations lead us to follow trivially from the proof of
Proposition \ref{an4}. Please note that in the upper estimate, by
Lemma \ref{kn1},
\begin{equation}\label{idealseh}
 L_{h,\infty}^{(d)}({\rm id}, \ell_{p_1}^{M_{j,i}},
 \ell_{p_h}^{M_{j,i}})\leq
C
\begin{cases}
2^{(in+d(j-i))/\tau},\ \ \  & 0\le i<j,\\
2^{\frac {in}\tau}, & 0\le j\le i,
\end{cases}\indent\indent\indent\
\end{equation}
\begin{equation}\label{idealsh}
L_{s,\infty}^{(d)}({\rm id}, \ell_{p_1}^{M_{j,i}},
\ell_{p_2}^{M_{j,i}})\leq C
\begin{cases}
2^{(in+d(j-i))(\frac 1\tau+\frac 1\gamma)},\ \ \ \ & 0\le i<j,\ \frac 1s>\frac 1h,\\
2^{in(\frac 1\tau+\frac 1\gamma)},& 0\le j\le i,\ \frac 1s>\frac 1h.
\end{cases}
\end{equation}
Similarly, with respect to the estimate from below,
$t=\min(p_1^\prime,p_2)$ is replaced by $\tau=\frac {p_2}\theta$ in
related places. One can consult our previous paper \cite{ZF10} for
further details.

Case 2.\ Assume $2\le p_1< p_2=\infty$. We proceed as above, now
using Lemma \ref{knify} (ii) instead. Again, computations of
corresponding operator ideal quasi-norms start up as below,
$$
L_{s,\infty}^{(d)}({\rm id}, \ell_{p_1}^N, \ell_\infty^N)\leq C
N^{\frac 1s-\frac 1{p_1}},\ \ \ \frac 1s>\frac 1{p_1},\ \
N\in\mathbb{N}.
$$
 In the estimate of lower bounds,
 we only need to consider two cases,
$0<\frac\delta d\le\frac {s^\prime}n$\
 or $0<\frac {s^\prime}n<
\frac\delta d$, as in Case 2 of the last proof, and take $m=N$ in
both cases.
\end{proof}

\begin{prop}\label{kn6}
Suppose $0< p_1 \le p_2 \le 2$ or $2< p_1 = p_2 \le \infty$. Then
 \vspace{-0.2cm}
\begin{equation}
d_{k}\Big({\rm id}, \ell_{q_1}(2^{j\delta}\ell_{p_1}(w)),
\ell_{q_2}(\ell_{p_2})\Big) \sim  k^{-\varkappa},
\end{equation}
with
\begin{equation}
\varkappa=\min\big(\frac\delta d,\frac {s^\prime}n\big).
\end{equation}
\end{prop}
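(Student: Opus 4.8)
The plan is to mirror the proof of Proposition \ref{an5} almost verbatim, replacing approximation numbers by Kolmogorov numbers throughout and feeding in the Kolmogorov analogue of the finite-dimensional input. The decisive observation is that in both admissible regimes the relevant finite-dimensional Kolmogorov number is of constant order: for $1\le p_1\le p_2\le 2$ and $k\le N/4$ Lemma \ref{kn1}(i) gives $d_k({\rm id},\ell_{p_1}^N,\ell_{p_2}^N)\sim 1$, while for $2<p_1=p_2\le\infty$ and $k\le N$ Lemma \ref{kn1}(iii) gives the same. The quasi-Banach cases $0<p_1<1$ (which only occur inside $0<p_1\le p_2\le 2$) are reduced to the covered ones by Lemma \ref{knify}(iii): since $d_k({\rm id},\ell_{p_1}^N,\ell_{p_2}^N)=d_k({\rm id},\ell_{\min(1,p_2)}^N,\ell_{p_2}^N)$, one is led either to $d_k({\rm id},\ell_1^N,\ell_{p_2}^N)$ with $1\le p_2\le 2$, or (if $p_2<1$) to the identity on $\ell_{p_2}^N$; in every instance $d_k\sim 1$ for $k$ up to a fixed fraction of $N$.

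For the upper bound I would use exactly the splitting (\ref{PQ5}) of ${\rm id}_\Lambda$ together with the $\rho$-triangle inequality (\ref{idealsinq}) for the ideal quasi-norm $L_{s,\infty}^{(d)}$. The only computation to record is that, because $d_k\le 1$ for all $k$ and $d_k=0$ for $k>N$, the supremum defining the ideal norm is attained near $k\sim N$, giving for every $s>0$
$$
L_{s,\infty}^{(d)}({\rm id},\ell_{p_1}^{M_{j,i}},\ell_{p_2}^{M_{j,i}})\le C\,M_{j,i}^{1/s}\le C\begin{cases}2^{(in+d(j-i))/s}, & 0\le i<j,\\ 2^{in/s}, & 0\le j\le i,\end{cases}
$$
where (\ref{rankpiece}) was used in the last step. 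This is literally the estimate displayed in the proof of Proposition \ref{an5}, so summing the six blocks of (\ref{PQ5}) as in Theorem 6 of \cite{LS11} (with $1/s$ in place of $1/r-1/p$) yields $d_k\preceq k^{-\min(\delta/d,s'/n)}$.

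For the lower bound I would reuse the commutative diagram and the operators $S_{j,i},T_{j,i}$ from Step 5 of the proof of Proposition \ref{an4}, which give $d_k({\rm id},\ell_{p_1}^{M_{j,i}},\ell_{p_2}^{M_{j,i}})\le 2^{j\delta+is'}\,d_k({\rm id},B_1,B_2)$. Two cases suffice. If $0<\delta/d\le s'/n$, I take $i=0$ and $j$ large, so that $N:=M_{j,1}\sim 2^{jd}$ with $\|S_{j,1}\|\le C2^{j\delta}$ and $\|T_{j,1}\|=1$; choosing $m=[N/4]$ and invoking the constant-order lower bound above gives $d_m({\rm id},B_1,B_2)\succeq 2^{-j\delta}\sim m^{-\delta/d}$, hence $d_k\succeq k^{-\delta/d}=k^{-\min(\delta/d,s'/n)}$ by monotonicity. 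If $0<s'/n<\delta/d$, I instead take $j=0$ and $i$ large, so that $N:=M_{0,i+1}\sim 2^{in}$, $\|S_{0,i+1}\|\le C2^{is'}$, again $m=[N/4]$, and analogously $d_k\succeq k^{-s'/n}$.

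The step I expect to require the most care is not any single estimate but the bookkeeping that explains the asymmetry between the two admissible regimes: for $p>2$ the constant-order behaviour $d_k\sim 1$ is available only on the diagonal $p_1=p_2$ (Lemma \ref{kn1}(iii)), whereas off the diagonal $2\le p_1<p_2$ one has the genuinely different order $\xi^\theta$ of Lemma \ref{kn1}(iv), which is precisely the content of Proposition \ref{kn5}. Consequently I must verify that the hypothesis $2<p_1=p_2$ is used at exactly the points where Lemma \ref{kn1}(iii) is applied, both in the ideal-norm computation and in the lower-bound diagram, and that no step silently assumes $p_1<p_2$ in that range.
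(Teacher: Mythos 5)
Your proposal is correct and follows essentially the same route as the paper: constant-order finite-dimensional Kolmogorov numbers fed into the operator-ideal decomposition for the upper bound, and the factorization through $S_{j,i},T_{j,i}$ from Step 5 of Proposition \ref{an4} for the lower bound, split into the two cases $\delta/d\le s'/n$ and $s'/n<\delta/d$. Three small points of comparison. First, for the upper bound the paper takes a shortcut you do not: it simply invokes $d_k\le a_k$ from (\ref{acd}) together with Proposition \ref{an5}, whose hypotheses cover both of your regimes; your direct computation $L_{s,\infty}^{(d)}({\rm id},\ell_{p_1}^{M_{j,i}},\ell_{p_2}^{M_{j,i}})\le C M_{j,i}^{1/s}$ amounts to unrolling that citation and is equally valid (note only that the division (\ref{PQ5}) has five blocks, not six). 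Second, in the subcase $p_1\le p_2<1$ the reduction via Lemma \ref{knify}(iii) lands you on the identity of $\ell_{p_2}^N$ with $p_2<1$, which is \emph{not} covered by Lemma \ref{kn1}; the constant-order lower bound there comes from Lemma \ref{kn021ify} and is guaranteed only at index $m=\big[\frac c2 N\big]$ for the (possibly small) constant $c$ of that lemma, which is why the paper switches from $m=[N/4]$ to that choice in this subcase. Your blanket ``$m=[N/4]$'' would need this adjustment, but since $m\sim N$ up to a constant either way the final exponent is unaffected. Third, your closing remark about why $p_1=p_2$ is indispensable for $p_1>2$ (Lemma \ref{kn1}(iii) versus the $\xi^\theta$ behaviour of Lemma \ref{kn1}(iv)) is exactly the boundary the paper draws between Propositions \ref{kn6} and \ref{kn5}, and your argument uses the diagonal hypothesis only where Lemma \ref{kn1}(iii) is invoked, as it should.
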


\begin{proof} The upper estimate is provided by the
corresponding statement about approximation numbers, cf. Proposition
\ref{an5} and (\ref{acd}).

In the lower estimate, once more we follow the proof of Step 5 of
Proposition \ref{an4}.  If $0<\frac\delta d\le\frac {s^\prime}n$\ we
take the same $N,\ S$\ and\ $T$\ as in point (a). If\ $0<\frac
{s^\prime}n< \frac\delta d$\ we take $N, S$\ and\ $T$\ the same as
in point (b). Moreover, in each of these two cases we choose
$m=[\frac N4]$\ (if $p_2\ge 1$) or\ $m=\big[\frac c2\cdot N\big]$\
(if $p_1\le p_2< 1$) where $c$ is the constant from Lemma
\ref{kn021ify}, and we use Lemma \ref{kn1} (if $p_1\ge 1$) or, Lemma
\ref{knify} (iii) and Lemma \ref{kn021ify} (if $p_1\le p_2< 1$) or,
Lemma \ref{kn1} and Lemma \ref{knify} (iii) (if $p_1< 1\le p_2$),
instead of Lemma \ref{an1}.
\end{proof}

\begin{prop}\label{kn7}
Suppose $0<\tilde{p}<p_2<p_1\leq\infty$ where $\frac
1{\tilde{p}}=\min\big(\frac\delta d,\frac {s^\prime}n\big)+\frac
1{p_1}$. Then
 \vspace{-0.2cm}
\begin{equation}
d_{k}\Big({\rm id}, \ell_{q_1}(2^{j\delta}\ell_{p_1}(w)),
\ell_{q_2}(\ell_{p_2})\Big) \sim  k^{-\varkappa},
\end{equation}
with
\begin{equation}
\varkappa=\min\big(\frac\delta d,\frac {s^\prime}n\big)+\frac
1{p_1}-\frac 1{p_2}.
\end{equation}
\end{prop}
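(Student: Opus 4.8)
The plan is to obtain the two matching bounds from different sources. Since we are in the range $p_2<p_1$, the approximation and Kolmogorov numbers turn out to have the same asymptotic order, so the upper estimate can simply be inherited from the companion result for approximation numbers, while only the lower estimate requires a genuine Kolmogorov argument. Thus the target exponent $\varkappa=\min(\frac\delta d,\frac{s^\prime}n)+\frac1{p_1}-\frac1{p_2}$ is exactly the one already established in Proposition \ref{an6}.

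For the upper bound I would invoke $d_k({\rm id})\le a_k({\rm id})$ from (\ref{acd}) together with Proposition \ref{an6}, whose hypotheses $\tilde p<p_2<p_1\le\infty$ are precisely the present ones; this gives $d_k({\rm id},B_1,B_2)\preceq k^{-\varkappa}$ with no additional work. Note that the assumption $\tilde p<p_2$ is equivalent to $\min(\frac\delta d,\frac{s^\prime}n)>\frac1{p_2}-\frac1{p_1}$, that is, to $\varkappa>0$; since here $\frac1{p^\ast}=\frac1{p_2}-\frac1{p_1}$, by Proposition \ref{compact} this is exactly the compactness condition in this parameter range, so the statement is meaningful.

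For the lower bound I would reproduce the commutative diagram of Step 5 in the proof of Proposition \ref{an4}, retaining the maps $S_{j,i}$ and $T_{j,i}$ with $\|S_{j,i}\|=2^{j\delta+is^\prime}$ and $\|T_{j,i}\|=1$, so that $d_m({\rm id}_1)\le\|S_{j,i}\|\,\|T_{j,i}\|\,d_m({\rm id})$. I would distinguish the two cases $0<\frac\delta d\le\frac{s^\prime}n$ and $0<\frac{s^\prime}n<\frac\delta d$, choosing $N$ and the maps exactly as in points (a) and (b) of that step, but now replacing Lemma \ref{an1} by Lemma \ref{kn021ify}. Taking $m=[\frac c2 N]+1$ with $c$ the constant from that lemma yields $d_m({\rm id},\ell_{p_1}^N,\ell_{p_2}^N)\succeq N^{1/{p_2}-1/{p_1}}$. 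In the first case, where $N\sim 2^{jd}$ and $\|S_{j,1}\|\le C2^{j\delta}$, this produces $d_m({\rm id})\succeq 2^{-j\delta}2^{jd(1/{p_2}-1/{p_1})}\sim m^{-(\delta/d+1/{p_1}-1/{p_2})}$; the second case, with $N\sim 2^{in}$ and $\|S_{0,i+1}\|\le C2^{is^\prime}$, analogously gives the exponent $s^\prime/n+1/{p_1}-1/{p_2}$. Monotonicity of the Kolmogorov numbers then extends these to $d_k({\rm id})\succeq k^{-\varkappa}$ for all $k$.

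The argument is essentially routine once the earlier propositions are in place, and I do not anticipate a serious obstacle. The one point worth emphasising is that Lemma \ref{kn021ify} is valid over the entire quasi-Banach range $0<p_2\le p_1\le\infty$, so that, in contrast to the mixed range $p_1<2<p_2$ which required the reduction of Lemma \ref{knify} (iii) to handle $p_2<1$, no separate treatment of small exponents is needed here; the single lemma covers every admissible pair with $p_2<p_1$, and the diagram transfers the finite-dimensional lower bound to the sequence-space level directly.
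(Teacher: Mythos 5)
Your proposal is correct and follows essentially the same route as the paper: the upper bound is inherited from Proposition \ref{an6} via $d_k\le a_k$, and the lower bound reuses the diagram of Step 5 of Proposition \ref{an4} in the two cases $\frac\delta d\le\frac{s^\prime}n$ and $\frac{s^\prime}n<\frac\delta d$ with Lemma \ref{kn021ify} and $m\sim\frac c2 N$. The exponent computation and the remark that the single lemma covers the full quasi-Banach range $0<p_2\le p_1\le\infty$ are both consistent with the paper's argument.
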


\begin{proof}
Once again the estimate from above is provided by the corresponding
statement about approximation numbers, cf. Proposition \ref{an6} and
(\ref{acd}).

Again, in the estimate from below we follow the proof of Step 5 of
Proposition \ref{an4}. We only need to consider two cases, $\frac
1{p^\ast}<\frac\delta d\le\frac {s^\prime}n$\ or $\frac
1{p^\ast}<\frac {s^\prime}n< \frac\delta d$, as in the proof of
Proposition \ref{an6}. And in each case we choose $m=\big[\frac
c2\cdot N\big]$, where $c$ is the constant from Lemma
\ref{kn021ify}.
\end{proof}

\subsection{Gelfand numbers of sequence spaces}\label{gns}

In this subsection we deal with  Gelfand numbers. First, we collect
some necessary information on the behaviour of $c_k({\rm id},
\ell_{p_1}^N, \ell_{p_2}^N)$, cf. \cite{FPRU,Gl83, Pie78, Vy08},
(\ref{dualc*d}) and (\ref{duald*c}).
\begin{lemma}\label{gn1} \ Let $N\in\mathbb{N}$.\vspace{-0.2cm}
\beg{enumerate}
\item[$(i)$] If $2\le p_1\le
p_2\le\infty$ and $k\le\frac{N}{4}$\, then \vspace{-0.2cm}
$$c_k\left({\rm id}, \ell_{p_1}^N, \ell_{p_2}^N\right)\sim 1.$$
\vspace{-0.8cm}
\item[$(ii)$] If $1< p_1 < 2 < p_2\le \infty$ and $k\le\frac{N}{4}$\,
then \vspace{-0.2cm}
$$c_{k}\left({\rm id}, \ell_{p_1}^N, \ell_{p_2}^N\right)\sim
\min\{1,N^{\frac 1{p_1^\prime}}k^{-\frac 1{2}}\}.$$ \vspace{-0.8cm}
\item[$(iii)$] If $1\le p_1 = p_2< 2$ and $k\le N$,\, then
$$c_k\left({\rm id}, \ell_{p_1}^N, \ell_{p_2}^N\right)\sim 1.$$
\vspace{-0.6cm}
\item[$(iv)$] If $1 < p_1 < p_2 \le 2$\, and $k\le N$, then
$$c_k\left({\rm id}, \ell_{p_1}^N, \ell_{p_2}^N\right)
\sim \xi^{\theta_1},$$\vspace{-0.2cm}
 where $\xi=\min\{1,N^{\frac
1{p_1^\prime}}k^{-\frac 1{2}}\},\,\theta_1 =
\frac{1/{p_2^\prime}-1/{p_1^\prime}}{1/2-1/{p_1^\prime}}.$
\end{enumerate}
\end{lemma}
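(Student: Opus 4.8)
The plan is to obtain all four cases as corollaries of the Kolmogorov estimates in Lemma \ref{kn1} via duality. Since every regime in the statement has $p_1,p_2\ge 1$, the spaces $\ell_{p_1}^N$ and $\ell_{p_2}^N$ are Banach spaces, so the relations (\ref{dualc*d})--(\ref{duald*c}) are available. First I would record the isometric identification $(\ell_p^N)^\ast=\ell_{p^\prime}^N$ for $1\le p\le\infty$, under which the adjoint of ${\rm id}\colon\ell_{p_1}^N\to\ell_{p_2}^N$ is again an identity, namely ${\rm id}\colon\ell_{p_2^\prime}^N\to\ell_{p_1^\prime}^N$. Combining this with (\ref{duald*c}) yields
\[
c_k\big({\rm id},\ell_{p_1}^N,\ell_{p_2}^N\big)=d_k\big({\rm id},\ell_{p_2^\prime}^N,\ell_{p_1^\prime}^N\big),\qquad k\in\mathbb{N},
\]
so the whole lemma reduces to reading off the right-hand side from Lemma \ref{kn1}, with the dimension $N$, and hence the ranges $k\le N/4$ or $k\le N$, carried over unchanged.

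Next I would match each hypothesis to the appropriate case of Lemma \ref{kn1} under the substitution $1/p\mapsto 1/p^\prime$. In case (i) the assumption $2\le p_1\le p_2\le\infty$ becomes $1\le p_2^\prime\le p_1^\prime\le 2$, which is the regime $1\le q_1\le q_2\le 2$ of Lemma \ref{kn1} and gives $d_k\sim 1$. In case (iii), $1\le p_1=p_2<2$ becomes $2<p_1^\prime=p_2^\prime\le\infty$, the diagonal regime $2<q_1=q_2\le\infty$, again giving $d_k\sim 1$. In case (ii), $1<p_1<2<p_2\le\infty$ becomes $1\le p_2^\prime<2<p_1^\prime<\infty$; the requirement $p_1^\prime<\infty$, i.e. $p_1>1$, is exactly what places us in the regime $1\le q_1<2<q_2<\infty$ of Lemma \ref{kn1}, whose estimate with $q_2=p_1^\prime$ is $\min\{1,N^{1/p_1^\prime}k^{-1/2}\}$, as claimed. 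Finally, in case (iv), $1<p_1<p_2\le 2$ becomes $2\le p_2^\prime<p_1^\prime<\infty$, the regime $2\le q_1<q_2<\infty$, applied with $(q_1,q_2)=(p_2^\prime,p_1^\prime)$.

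The only genuine bookkeeping is to confirm that the exponent from that last case transforms into $\theta_1$. With $(q_1,q_2)=(p_2^\prime,p_1^\prime)$ the quantity $\theta=\frac{1/q_1-1/q_2}{1/2-1/q_2}$ becomes $\frac{1/p_2^\prime-1/p_1^\prime}{1/2-1/p_1^\prime}=\theta_1$, while $\xi=\min\{1,N^{1/q_2}k^{-1/2}\}$ becomes $\min\{1,N^{1/p_1^\prime}k^{-1/2}\}$, so that $d_k\sim\xi^{\theta_1}$ and hence $c_k\sim\xi^{\theta_1}$.

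I do not expect a serious obstacle, as this is essentially a transcription of Lemma \ref{kn1} through duality. The one point demanding care --- and the reason the endpoint $p_1=1$ must be excluded in cases (ii) and (iv) --- is to remain inside the Banach-space setting in which (\ref{duald*c}) holds and in which the dual exponents stay finite; once this is checked, the alignment of parameter ranges and the identity $\theta=\theta_1$ follow at once.
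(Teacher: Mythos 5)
Your proposal is correct and coincides with the paper's own argument: the paper proves Lemma \ref{gn1} precisely by invoking the duality relations (\ref{dualc*d})--(\ref{duald*c}) together with Lemma \ref{kn1}, which is exactly your reduction $c_k({\rm id},\ell_{p_1}^N,\ell_{p_2}^N)=d_k({\rm id},\ell_{p_2^\prime}^N,\ell_{p_1^\prime}^N)$ followed by matching parameter regimes. Your bookkeeping of the exponent $\theta\mapsto\theta_1$ and of the excluded endpoint $p_1=1$ is also consistent with the statement.
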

\

The proof of this lemma follows by (\ref{dualc*d}), (\ref{duald*c})
and Lemma \ref{kn1}.

The following result is due to Foucart et al.\cite{FPRU}. Note that
the definition of Gelfand widths is used in \cite{FPRU}. Here we
refer to it in our words.

 \beg{lemma}\label{gnupp}
\ Let $1\le k\le N<\infty$. \beg{enumerate}
\item[$(i)$]\ If~ $0< p_1\le 1$ and $2 < p_2\le\infty$\ then
there exist constants $C_1,\ C_2>0$\ depending only on\ $p_1$ and\
$p_2$\ such that
$$
C_1\min\Big\{1,\frac{\ln\big(\frac
 N{k-1}\big)+1}{k-1}\Big\}^{1/{p_1}-1/{p_2}} \le
 c_k\left({\rm id}, \ell_{p_1}^N, \ell_{p_2}^N\right)\le
C_2
 \min\Big\{1,\frac{\ln\big(\frac
 N{k-1}\big)+1}{k-1}\Big\}^{1/{p_1}-1/2}.$$

\item[$(ii)$]\ If~ $0< p_1\le 1$ and $p_1 < p_2\le 2$\ then
there exist constants $C_1,\ C_2>0$\ depending only on\ $p_1$ and\
$p_2$\ such that
$$
C_1\min\Big\{1,\frac{\ln\big(\frac
 N{k-1}\big)+1}{k-1}\Big\}^{1/{p_1}-1/{p_2}} \le c_k\left({\rm id},
\ell_{p_1}^N, \ell_{p_2}^N\right)\le
C_2\min\Big\{1,\frac{\ln\big(\frac
 N{k-1}\big)+1}{k-1}\Big\}^{1/{p_1}-1/{p_2}}.$$

\end{enumerate}
\end{lemma}

\begin{re}For the upper bounds, there is another result given by
Vyb\'iral \cite{Vy08}, cf. Lemma 4.11, with a slight difference
between them on the log-factors. But they are equivalent for our
estimates of related upper bounds considered in Theorem \ref{T3}.
\end{re}

 \beg{lemma}\label{gnlow}
\ Let $ k\in \mathbb{N}$. \beg{enumerate}
\item[$(i)$]\ If~ $0< p_1\le 1$ and $2 \le p_2\le\infty$\ then
\beq\label{gnlow2}
 c_{k}\left({\rm id}, \ell_{p_1}^{2k},
\ell_{p_2}^{2k}\right)\succeq
 k^{1/2-1/{p_1}}.
 \enq

\item[$(ii)$]\ If~ $0< p_1\le 1$ and $p_1 < p_2\le 2$\ then
\beq\label{gnlow0} c_k\left({\rm id}, \ell_{p_1}^{2k},
\ell_{p_2}^{2k}\right)\succeq
 k^{1/{p_2}-1/{p_1}}.
 \enq

\end{enumerate}
\end{lemma}
The proof of this lemma follows literally \cite{Vy08}, p.~567, by
the multiplicativity of Gelfand numbers. In fact, The point (ii) of
Lemma \ref{gnupp} may also imply point (ii) of Lemma \ref{gnlow}.

 \beg{lemma}\label{gn021ify}
If~ $1\le k\le N<\infty$\ and $0<p_2\le p_1\le\infty$, then
$$
c_k\big({\rm id}, \ell_{p_1}^N, \ell_{p_2}^N\big)=
(N-k+1)^{1/{p_2}-1/{p_1}}.
$$
\end{lemma}
The proof of this lemma follows literally \cite{Pie78}, Section
11.11.4, see also \cite{Pin85}. Indeed the original proof is used
only for the Banach space case $1\le p_2\le p_1\le\infty$. However,
the same proof works also in the quasi-Banach case\ $0<p_2\le
p_1\le\infty$.

Now we show some asymptotic estimates of Gelfand numbers of
embeddings between related sequence spaces in the quasi-Banach
setting, $0< p, q\le \infty$.

\begin{prop}\label{gn4}
Suppose $0< p_1 < 2 < p_2 \le \infty$.

 \beg{enumerate}
\item[$(i)$] If\, $\min\big(\frac\delta d,\frac
{s^\prime}n\big)>\frac 1{p_1^\prime}$, then
 \vspace{-0.2cm}
\begin{equation}
c_{k}\Big({\rm id}, \ell_{q_1}(2^{j\delta}\ell_{p_1}(w)),
\ell_{q_2}(\ell_{p_2})\Big) \sim k^{-\min\big(\frac\delta d,\frac
{s^\prime}n\big)
 -\frac 1{p_1}+\frac 12}.
\end{equation}
 \vspace{-0.4cm}
\item[$(ii)$] If $\delta > s^\prime$ and either
 $
\begin{cases}
\delta<\frac d{p_1^\prime},\\
\delta-s^\prime<\frac{2d-n}{p_1^\prime},
\end{cases}\ \ {\rm or}\ \
\begin{cases}
\delta+s^\prime<\frac n{p_1^\prime},\\
\delta-s^\prime>\frac{2d-n}{p_1^\prime},
\end{cases} $
then
\begin{equation}
c_{k}\Big({\rm id}, \ell_{q_1}(2^{j\delta}\ell_{p_1}(w)),
\ell_{q_2}(\ell_{p_2})\Big) \sim k^{-\frac
{s^\prime}n\cdot\frac{p_1^\prime}2}.
\end{equation}
 \vspace{-0.4cm}
\item[$(iii)$] If\ $\delta<\frac d{p_1^\prime},\ \delta<s^\prime<\frac n{p_1^\prime}$
and $\delta-s^\prime<\frac{2d-n}{p_1^\prime}$, then \vspace{-0.2cm}
\begin{equation}k^{-\frac
{p_1^\prime}2\cdot\min(\frac\delta d,\frac {s^\prime}n)}\preceq
c_{k}\Big({\rm id}, \ell_{q_1}(2^{j\delta}\ell_{p_1}(w)),
\ell_{q_2}(\ell_{p_2})\Big)\preceq k^{-\frac {p_1^\prime}2
\cdot\min(\frac\delta d,\frac {\delta+s^\prime}{2n})}.
\end{equation}
 \vspace{-0.4cm}
\item[$(iv)$] If\ $\delta<s^\prime,\ \delta+ s^\prime<\frac n{p_1^\prime}$ and
$\delta-s^\prime>\frac{2d-n}{p_1^\prime}$, then \vspace{-0.2cm}
\begin{equation}k^{-\frac
{p_1^\prime}2\cdot\min(\frac\delta d,\frac {s^\prime}n)}\preceq
c_k\Big({\rm id}, \ell_{q_1}(2^{j\delta}\ell_{p_1}(w)),
\ell_{q_2}(\ell_{p_2})\Big)\preceq k^{-\frac
{p_1^\prime}n\cdot\min(\delta,\frac {\delta+s^\prime}4)}.
\end{equation}
 \end{enumerate}
\end{prop}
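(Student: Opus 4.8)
The plan is to mirror the approximation-number argument of Proposition~\ref{an4} line by line, replacing every finite-dimensional input $a_k({\rm id},\ell_{p_1}^N,\ell_{p_2}^N)$ by its Gelfand counterpart and replacing the exponent $t=\min(p_1^\prime,p_2)$ throughout by $p_1^\prime$. Concretely, the block projections $P_{j,i}$, the rank and dimension counts (\ref{rankpiece})--(\ref{rankpiecelow}), the divisions (\ref{PQ5}) and (\ref{PQ6}), and the commutative-diagram technique of Step~5 all carry over unchanged; only the scalar width estimates fed into them must be swapped. I would split according to whether $p_1>1$ (Banach) or $p_1\le 1$ (genuinely quasi-Banach), since the behaviour of $c_k({\rm id},\ell_{p_1}^N,\ell_{p_2}^N)$ is structurally different in the two ranges.

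For $1<p_1<2<p_2\le\infty$ I first record the Gelfand analogue of (\ref{alp2p}). Since these spaces are Banach, the duality relations (\ref{dualc*d})--(\ref{duald*c}) give $c_k({\rm id},\ell_{p_1}^N,\ell_{p_2}^N)=d_k({\rm id},\ell_{p_2^\prime}^N,\ell_{p_1^\prime}^N)$, and Lemma~\ref{kn3} applied to this dual pair (legitimate because $1\le p_2^\prime<2<p_1^\prime<\infty$) yields
$$
c_k({\rm id},\ell_{p_1}^N,\ell_{p_2}^N)\le C
\begin{cases}
1 & k\le N^{2/p_1^\prime},\\
N^{1/p_1^\prime}k^{-1/2} & N^{2/p_1^\prime}<k\le N,\\
0 & k>N.
\end{cases}
$$
Feeding this in place of (\ref{alp2p}) and Lemma~\ref{gn1}(ii) in place of Lemma~\ref{an1}(ii), the operator-ideal estimates (\ref{idealse2})--(\ref{idealPjis}) and the upper bounds of Steps~3 and~4 reproduce assertions (i)--(iv) with $t$ replaced by $p_1^\prime$; in particular the rate in (i) becomes $\min(\delta/d,s^\prime/n)+\tfrac12-\tfrac1{p_1^\prime}=\min(\delta/d,s^\prime/n)+\tfrac1{p_1}-\tfrac12$. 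The matching lower bounds come from the diagram of Step~5, where Lemma~\ref{gn1}(ii) now supplies $c_m({\rm id}_1)\sim N^{1/p_1^\prime}m^{-1/2}$.

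For $0<p_1\le 1$ one has $p_1^\prime=\infty$, so the side conditions in (ii)--(iv) demand $\delta<d/p_1^\prime=0$ or $\delta+s^\prime<n/p_1^\prime=0$ and can never hold; hence only (i) is asserted, and its hypothesis $\min(\delta/d,s^\prime/n)>1/p_1^\prime=0$ is automatic. Here I would run the Step~3 bookkeeping over the division (\ref{PQ5}), but with the finite-dimensional input taken from Lemma~\ref{gnupp}(i); the base decay exponent is now $1/p_1-1/2$ and there is no power-of-$N$ prefactor, only a logarithm, so that $L_{\sigma,\infty}^{(c)}({\rm id},\ell_{p_1}^N,\ell_{p_2}^N)\le C N^{1/\sigma-(1/p_1-1/2)}(\log N)^{1/p_1-1/2}$ for $1/\sigma>1/p_1-1/2$. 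The lower bound uses the Step~5 diagram with Lemma~\ref{gnlow}(i) at the diagonal $N=2m$, giving $c_m({\rm id}_1,\ell_{p_1}^{2m},\ell_{p_2}^{2m})\succeq m^{-(1/p_1-1/2)}$. Both bounds reproduce $\varkappa=\min(\delta/d,s^\prime/n)+1/p_1-1/2$, the same formula as in the Banach range but obtained through a different balance.

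The quasi-Banach range is the one real obstacle. For Kolmogorov numbers the analogous step was free, since Lemma~\ref{knify}(iii) collapses $0<p_1<1$ to the index $\min(1,p_2)$; Gelfand numbers, however, genuinely depend on $p_1<1$, and Lemma~\ref{gnupp}(i) departs from the Banach estimate in three ways that must be tracked: its $k$-decay is $k^{-(1/p_1-1/2)}$ rather than $k^{-1/2}$, it carries no $N^{1/p_1^\prime}$ factor, and its upper and lower log-exponents ($1/p_1-1/2$ versus $1/p_1-1/p_2$) do not agree. Consequently sharpness cannot be extracted from Lemma~\ref{gnupp}(i) alone and must be secured by the independent lower bound Lemma~\ref{gnlow}(i). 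The logarithmic factors are the only bookkeeping nuisance: after substituting $N=M_{j,i}$ they appear as powers of $i+j$, and the geometric weight decay $2^{-\rho(j\delta+is^\prime)}$ in the ideal summation (\ref{idealsinq}) absorbs them, leaving the stated polynomial order intact.
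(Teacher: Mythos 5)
Your proposal follows essentially the same route as the paper: the paper likewise reduces to the machinery of Proposition \ref{an4} with $t$ replaced by $p_1^\prime$, treating $1<p_1<2<p_2\le\infty$ via the dualized scalar estimates (Lemma \ref{gn1}(ii), equivalently your duality application of Lemma \ref{kn3}) and treating $0<p_1\le 1$ via Lemma \ref{gnupp}(i) for the ideal quasi-norms and Lemma \ref{gnlow}(i) with $m=[N/2]$ for the lower bound, noting as you do that points (ii)--(iv) vanish there. Your account is, if anything, more explicit than the paper's about the log-factor bookkeeping in the quasi-Banach case; no gaps.
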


\begin{proof}
We consider three cases from the standpoint of $p_1$\ and\ $p_2$.
Note that points (ii)-(iv) vanish when $0<p_1\le 1$.

Case 1.\ Assume $1<  p_1 < 2 < p_2 \le \infty$. This is
corresponding to Case 1 in the proof of Proposition \ref{kn4}. So we
may deal with the proof exactly in the same manner in terms of Lemma
\ref{gn1} (ii). The changes begin with (\ref{andiscr}), where $d_n$
is substituted by $c_n$. And the others go on trivially.

Case 2.\ Assume $0<p_1\le 1$ and $2 <p_2\le\infty$. We proceed as
above. Related computations of ideal quasi-norms herein are finished
by  Lemma \ref{gnupp} (i). Moreover, in the estimate of lower
bounds, because of $0<p_1\le 1$, we consider two cases,
$0<\frac\delta d\le\frac {s^\prime}n$\
 or $0<\frac {s^\prime}n<
\frac\delta d$, and take $m=\left[\frac N2\right]$ in both cases
based on (\ref{gnlow2}).
\end{proof}

\begin{prop}\label{gn5}
Suppose $ 0< p_1 < p_2 \le 2$. We set $\theta_1 =
\frac{1/{p_2^\prime}-1/{p_1^\prime}}{1/2-1/{p_1^\prime}}$.
 \beg{enumerate}
\item[$(i)$] If\, $\min\big(\frac\delta d,\frac
{s^\prime}n\big)>\frac{\theta_1}{p_1^\prime}$, then
 \vspace{-0.2cm}
\begin{equation}
c_{k}\Big({\rm id}, \ell_{q_1}(2^{j\delta}\ell_{p_1}(w)),
\ell_{q_2}(\ell_{p_2})\Big) \sim k^{-\min\big(\frac\delta d,\frac
{s^\prime}n\big)
 -\frac 1{p_1}+\frac 1{p_2}}.
\end{equation}
 \vspace{-0.4cm}
\item[$(ii)$] If $\delta > s^\prime$ and either
 $
\begin{cases}
\delta<\frac d{p_1^\prime}\theta_1,\\
\delta-s^\prime<\frac{2d-n}{p_1^\prime}\theta_1,
\end{cases}\ \ {\rm or}\ \
\begin{cases}
\delta+s^\prime<\frac n{p_1^\prime}\theta_1,\\
\delta-s^\prime>\frac{2d-n}{p_1^\prime}\theta_1,
\end{cases} $
then
\begin{equation}
c_{k}\Big({\rm id}, \ell_{q_1}(2^{j\delta}\ell_{p_1}(w)),
\ell_{q_2}(\ell_{p_2})\Big) \sim k^{-\frac
{s^\prime}n\cdot\frac{p_1^\prime}2}.
\end{equation}
 \vspace{-0.4cm}
\item[$(iii)$] If\ $\delta<\frac d{p_1^\prime}\theta_1,\
\delta<s^\prime<\frac n{p_1^\prime}\theta_1$ and
$\delta-s^\prime<\frac{2d-n}{p_1^\prime}\theta_1$, then
\vspace{-0.2cm}
\begin{equation}k^{-\frac
{p_1^\prime}2\cdot\min(\frac\delta d,\frac {s^\prime}n)}\preceq
c_{k}\Big({\rm id}, \ell_{q_1}(2^{j\delta}\ell_{p_1}(w)),
\ell_{q_2}(\ell_{p_2})\Big)\preceq k^{-\frac {p_1^\prime}2
\cdot\min(\frac\delta d,\frac {\delta+s^\prime}{2n})}.
\end{equation}
 \vspace{-0.4cm}
\item[$(iv)$] If\ $\delta<s^\prime,\ \delta+ s^\prime<\frac n{p_1^\prime}\theta_1$ and
$\delta-s^\prime>\frac{2d-n}{p_1^\prime}\theta_1$, then
\vspace{-0.2cm}
\begin{equation}k^{-\frac
{p_1^\prime}2\cdot\min(\frac\delta d,\frac {s^\prime}n)}\preceq
c_k\Big({\rm id}, \ell_{q_1}(2^{j\delta}\ell_{p_1}(w)),
\ell_{q_2}(\ell_{p_2})\Big)\preceq k^{-\frac
{p_1^\prime}n\cdot\min(\delta,\frac {\delta+s^\prime}4)}.
\end{equation}
 \end{enumerate}
\end{prop}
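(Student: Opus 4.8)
The plan is to run the operator-ideal scheme of Propositions \ref{an4} and \ref{an6}, feeding in the finite-dimensional Gelfand estimates of Lemmas \ref{gn1}, \ref{gnupp} and \ref{gnlow} in place of the approximation-number inputs used there. This is the exact Gelfand counterpart of the Kolmogorov computation in Proposition \ref{kn5}, to which it corresponds under the duality relations (\ref{dualc*d})--(\ref{duald*c}) via $p_2\leftrightarrow p_1'$ and $\theta\leftrightarrow\theta_1$; indeed the $\theta_1$ defined here is obtained from the $\theta$ of Proposition \ref{kn5} under $p_1\mapsto p_2'$, $p_2\mapsto p_1'$. I would split according to the size of $p_1$, since the finite-dimensional behaviour changes character at $p_1=1$.

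In Case 1, $1<p_1<p_2\le 2$, Lemma \ref{gn1}(iv) gives $c_k({\rm id},\ell_{p_1}^N,\ell_{p_2}^N)\sim\xi^{\theta_1}$ with $\xi=\min\{1,N^{1/p_1'}k^{-1/2}\}$. Putting $\tau=\frac{p_1'}{\theta_1}$, $h=\frac2{\theta_1}$ and $\frac1s=\frac1\gamma+\frac1h$ for $s>0$, this reads $c_k\sim N^{1/\tau}k^{-1/h}$ in the intermediate range, whence
\begin{equation*}
L_{h,\infty}^{(c)}({\rm id},\ell_{p_1}^{M_{j,i}},\ell_{p_2}^{M_{j,i}})\le C
\begin{cases}
2^{(in+d(j-i))/\tau}, & 0\le i<j,\\
2^{in/\tau}, & 0\le j\le i,
\end{cases}
\end{equation*}
together with the companion $L_{s,\infty}^{(c)}$-estimate carrying the exponent $\frac1\tau+\frac1\gamma$ when $\frac1s>\frac1h$. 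With these at hand the six-block splitting (\ref{PQ6}), the choices of $M_1,\dots,M_4$, and the summation of $\triangle_1,\dots,\triangle_6$ are exactly those of Proposition \ref{an4} after the substitutions $t\mapsto\tau$ (in every ideal exponent and every threshold condition) and $\frac12\mapsto\frac1h$ (in every width-decay factor). The relations $\frac1\tau=\frac{\theta_1}{p_1'}$ and $\frac1h=\frac{\theta_1}2$ make the stated exponents come out: $\frac1h-\frac1\tau=\frac1{p_1}-\frac1{p_2}$ turns an4's part-(i) exponent $\min+\frac12-\frac1t$ into $\min+\frac1{p_1}-\frac1{p_2}$, while $\tau/h=p_1'/2$ turns an4's part-(ii) exponent $\frac{s'}n\cdot\frac t2$ into $\frac{s'}n\cdot\frac{p_1'}2$, and the thresholds $\delta<d/t$, $\delta-s'<(2d-n)/t,\dots$ become the $\theta_1$-weighted inequalities listed in parts (ii)--(iv).

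In Case 2, $0<p_1\le 1$ and $p_1<p_2\le 2$, one has $p_1'=\infty$, so parts (ii)--(iv) are vacuous and only (i) remains. Here I would invoke Lemma \ref{gnupp}(ii), giving $c_k({\rm id},\ell_{p_1}^N,\ell_{p_2}^N)\preceq\big(k^{-1}(\ln(N/(k-1))+1)\big)^{1/p_1-1/p_2}$ and hence, up to a logarithmic factor, $L_{s,\infty}^{(c)}({\rm id},\ell_{p_1}^{M_{j,i}},\ell_{p_2}^{M_{j,i}})\le C\,M_{j,i}^{1/s-1/p_1+1/p_2}$ for $\frac1s>\frac1{p_1}-\frac1{p_2}$. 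This is precisely the ideal estimate underlying Proposition \ref{an6} (with $a$ replaced by $c$ and Lemma \ref{an2} by Lemma \ref{gnupp}(ii)), so the block decomposition there yields $c_k\preceq k^{-(\min(\delta/d,s'/n)+1/p_1-1/p_2)}$ once the harmless log-factors are absorbed, just as in the parallel entropy-number computations, cf.\ \cite{ET96,KLSS06b}.

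For all the lower bounds I would follow Step 5 of Proposition \ref{an4} through the commutative diagram built from $S_{j,i},T_{j,i}$, using $c_k({\rm id}_1)\le\|S_{j,i}\|\,\|T_{j,i}\|\,c_k({\rm id})$ with $\|S_{j,i}\|\sim 2^{j\delta+is'}$ and $\|T_{j,i}\|=1$. As there, only the sub-cases $0<\frac\delta d\le\frac{s'}n$ (block $I_{j,1}$, $N\sim 2^{jd}$) and $0<\frac{s'}n<\frac\delta d$ (block $I_{0,i+1}$, $N\sim 2^{in}$) need be treated. For part (i) I take $m=[N/4]$ and apply Lemma \ref{gn1}(iv) when $p_1>1$ (respectively $m=[N/2]$ and (\ref{gnlow0}) when $p_1\le 1$); for parts (ii)--(iv) I take $m=[N^{2/p_1'}]$, at which $\xi\sim1$ and hence $c_m({\rm id})\succeq\|S_{j,i}\|^{-1}$. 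I expect the main obstacle to be purely organizational: verifying that the intricate threshold conditions of (ii)--(iv) transform consistently under $t\mapsto\tau$ and $\frac12\mapsto\frac1h$, that the dichotomy $\delta\gtrless s'$ selects the correct branch of (\ref{PQ6}) exactly as in Proposition \ref{an4}, and that the logarithmic factors arising in Case 2 do not degrade the polynomial order.
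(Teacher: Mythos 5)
Your proposal is correct and follows essentially the same route as the paper: the paper likewise splits at $p_1=1$, runs Case $1<p_1<p_2\le 2$ through the scheme of Proposition \ref{kn5} (Case 1) with Lemma \ref{gn1}(iv) supplying the ideal quasi-norm estimates via the substitution $\tau=p_1^\prime/\theta_1$, $h=2/\theta_1$, and handles $0<p_1\le 1$ with Lemma \ref{gnupp}(ii) for the upper bound and Lemma \ref{gnlow}(ii)/(\ref{gnlow0}) with $m=[N/2]$ for the lower bound. Your choices of $m$ in the lower-bound diagram and your accounting of how the thresholds transform under $t\mapsto\tau$, $\tfrac12\mapsto\tfrac1h$ agree with what the paper's (much terser) proof delegates to Propositions \ref{an4} and \ref{kn5}.
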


\begin{proof}
We consider two cases for $p_1$\ and\ $p_2$. Note that points
(ii)-(iv) vanish when $0<p_1\le 1$.

Case 1.\ Assume  $1<  p_1 < p_2 \le 2$. This is corresponding to
Case 1 in the proof of Proposition \ref{kn5}. So we may deal with
the proof exactly in the same manner in terms of Lemma \ref{gn1}
(iv) and the ideas from Case 1 of the last proof.

Case 2.\ Assume $0<p_1\le 1$\ and\ $ p_1< p_2\le 2$. Once more we
proceed exactly as in the proof of Theorem 6 in \cite{LS11}. Here,
Lemma \ref{gnupp} (ii) implies the computations of corresponding
operator ideal quasi-norms. In the lower estimate,
 we consider two cases,
$0<\frac\delta d\le\frac {s^\prime}n$\
 or $0<\frac {s^\prime}n<
\frac\delta d$, as in Case 2 of the last proof, and take
$m=\big[\frac N2\big]$ in both cases based on Lemma \ref{gnlow} (ii)
or (\ref{gnlow0}).
\end{proof}

\begin{prop}\label{gn6}
Suppose $2\le p_1 \le p_2\le \infty$ \,\,or\,\ $0< p_1= p_2< 2$.
Then
 \vspace{-0.2cm}
\begin{equation}
c_{k}\Big({\rm id}, \ell_{q_1}(2^{j\delta}\ell_{p_1}(w)),
\ell_{q_2}(\ell_{p_2})\Big) \sim  k^{-\varkappa},
\end{equation}
with
\begin{equation}
\varkappa=\min\big(\frac\delta d,\frac {s^\prime}n\big).
\end{equation}
\end{prop}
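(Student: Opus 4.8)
The plan is to handle the upper and lower bounds separately, following exactly the template already used for the Kolmogorov numbers in Proposition~\ref{kn6}. For the upper bound I would simply invoke the inequality $c_k\le a_k$ from (\ref{acd}) together with Proposition~\ref{an5}. Both parameter ranges of the present statement lie in the scope of Proposition~\ref{an5}: the range $2\le p_1\le p_2\le\infty$ is covered verbatim, while $0<p_1=p_2<2$ is the diagonal sub-case of $0<p_1\le p_2\le 2$. Hence
\[
c_k({\rm id},B_1,B_2)\le a_k({\rm id},B_1,B_2)\preceq k^{-\min(\delta/d,\,s'/n)},
\]
which is the desired upper estimate with $\varkappa=\min(\delta/d,\,s'/n)$.

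For the lower bound I would follow Step~5 of the proof of Proposition~\ref{an4} line by line, replacing the approximation numbers by Gelfand numbers in the commutative diagram there. Using the factorisation ${\rm id}_1=T_{j,i}\circ{\rm id}\circ S_{j,i}$ and the multiplicativity (PS3) of $s$-numbers one obtains
\[
c_m({\rm id}_1,\ell_{p_1}^N,\ell_{p_2}^N)\le\|S_{j,i}\|\,\|T_{j,i}\|\,c_m({\rm id},B_1,B_2),
\]
so it suffices to bound the left-hand side from below. If $0<\delta/d\le s'/n$ I take $N,S,T$ as in point~(a) of Step~5, where $N\sim 2^{jd}$, $\|S_{j,1}\|\le C2^{j\delta}$ and $\|T_{j,1}\|=1$; if $0<s'/n<\delta/d$ I take them as in point~(b), where $N\sim 2^{in}$, $\|S_{0,i+1}\|\le C2^{is'}$ and $\|T_{0,i+1}\|=1$. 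In either case I choose $m=[N/4]$ and bound $c_m({\rm id},\ell_{p_1}^N,\ell_{p_2}^N)$ below by a constant: by Lemma~\ref{gn1}(i) when $2\le p_1\le p_2\le\infty$, and by Lemma~\ref{gn021ify} specialised to equal exponents (which gives $c_m=(N-m+1)^{0}=1$) when $0<p_1=p_2<2$. Substituting into the displayed inequality yields $c_m({\rm id})\ge C2^{-j\delta}$ in the first case and $c_m({\rm id})\ge C2^{-is'}$ in the second; since $m\sim N$, the monotonicity (PS1) of the Gelfand numbers then gives $c_k({\rm id})\succeq k^{-\min(\delta/d,\,s'/n)}$ for all $k\in\mathbb{N}$, matching the upper bound.

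I do not expect any serious analytic difficulty, since the combinatorial heart of the argument was already carried out in Proposition~\ref{an4}. The only point requiring a little care is the lower estimate for the finite-dimensional identity embeddings in the quasi-Banach regime $0<p_1=p_2<1$, where Lemma~\ref{gn1}(iii) does not apply; this is resolved by the elementary observation that the Gelfand numbers of the identity on $\ell_p^N$ equal $1$ for $m\le N$, which is precisely Lemma~\ref{gn021ify} with $p_1=p_2$. Care is likewise needed to check that the constant choice $m=[N/4]$ is admissible in both cited lemmas, which it is.
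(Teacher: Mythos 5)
Your argument is correct and, for the lower bound, coincides with the paper's: the paper also reduces to points (a) and (b) of Step 5 in the proof of Proposition~\ref{an4}, with the finite-dimensional estimates supplied by Lemma~\ref{gn1} and (for the quasi-Banach diagonal case $0<p_1=p_2<1$) Lemma~\ref{gn021ify}, exactly as you do. The only divergence is in the upper bound: the paper reruns the operator-ideal computation of Proposition~\ref{an5}, substituting Lemma~\ref{gn1} and Lemma~\ref{gn021ify} for Lemma~\ref{an1} to bound $L^{(c)}_{s,\infty}({\rm id},\ell_{p_1}^{M_{j,i}},\ell_{p_2}^{M_{j,i}})\preceq M_{j,i}^{1/s}$, whereas you bypass this entirely via $c_k\le a_k$ from (\ref{acd}) and Proposition~\ref{an5}. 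Your shortcut is legitimate, since both parameter ranges of the present statement are contained in those of Proposition~\ref{an5} with the same exponent $\varkappa$, and it is in fact the very device the paper uses for the Kolmogorov analogue in Proposition~\ref{kn6}; what it buys is brevity, at the cost of not illustrating that the ideal-norm machinery closes on the Gelfand side as well. Your cautionary remarks about $m=[N/4]$ and about $0<p_1=p_2<1$ are both well taken and correctly resolved.
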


\begin{proof}
The proof of this proposition follows exactly as in the proof of
Proposition \ref{an5} with Lemma \ref{an1} replaced by Lemma
\ref{gn1} and Lemma \ref{gn021ify}.
\end{proof}

\begin{prop}\label{gn7}
Suppose $0<\tilde{p}<p_2<p_1\leq\infty$ where $\frac
1{\tilde{p}}=\min\big(\frac\delta d,\frac {s^\prime}n\big)+\frac
1{p_1}$. Then
 \vspace{-0.2cm}
\begin{equation}
c_{k}\Big({\rm id}, \ell_{q_1}(2^{j\delta}\ell_{p_1}(w)),
\ell_{q_2}(\ell_{p_2})\Big) \sim  k^{-\varkappa},
\end{equation}
with
\begin{equation}
\varkappa=\min\big(\frac\delta d,\frac {s^\prime}n\big)+\frac
1{p_1}-\frac 1{p_2}.
\end{equation}
\end{prop}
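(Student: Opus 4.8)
The plan is to treat the upper and lower bounds separately, exploiting the close parallel with the approximation-number case already settled in Proposition \ref{an6}, since the exact $s$-number formula underlying both is the same.

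For the upper bound I would simply combine the elementary inequality $c_k({\rm id})\le a_k({\rm id})$ recorded in (\ref{acd}) with Proposition \ref{an6}, which establishes $a_k\sim k^{-\varkappa}$ with the very exponent $\varkappa=\min(\frac\delta d,\frac{s^\prime}n)+\frac1{p_1}-\frac1{p_2}$ claimed here, in exactly the same regime $\tilde p<p_2<p_1\le\infty$. This yields $c_k\preceq k^{-\varkappa}$ immediately, with no further work. It is worth recording that $\varkappa>0$: the hypothesis $\tilde p<p_2$ unwinds to $\min(\frac\delta d,\frac{s^\prime}n)+\frac1{p_1}>\frac1{p_2}$, i.e. $\min(\frac\delta d,\frac{s^\prime}n)>\frac1{p_2}-\frac1{p_1}=\frac1{p^\ast}$ (recall $p_2<p_1$), which is precisely the compactness condition and guarantees the embedding is indeed compact.

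For the lower bound I would follow Step 5 of the proof of Proposition \ref{an4} line by line, changing only the relevant finite-dimensional $s$-number from $a_k$ to $c_k$. Using the commutative diagram built from the lifting $S_{j,i}$ and the restriction $T_{j,i}$, for which $\|S_{j,i}\|\le C2^{j\delta+is^\prime}$ and $\|T_{j,i}\|=1$, multiplicativity of the Gelfand numbers gives $c_m({\rm id})\ge C2^{-j\delta-is^\prime}c_m({\rm id},\ell_{p_1}^{N},\ell_{p_2}^{N})$. The decisive input is now Lemma \ref{gn021ify}, which supplies the exact value $c_m({\rm id},\ell_{p_1}^N,\ell_{p_2}^N)=(N-m+1)^{1/p_2-1/p_1}$ for $p_2<p_1$; this is formally identical to the formula of Lemma \ref{an2} used in Proposition \ref{an6}, so the rest of the argument transfers verbatim. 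As in points (a) and (b) there, I would split into the two cases $\frac1{p^\ast}<\frac\delta d\le\frac{s^\prime}n$ and $\frac1{p^\ast}<\frac{s^\prime}n<\frac\delta d$: in the first take $i=0$ with $N=M_{j,1}\sim2^{jd}$, in the second take $j=0$ with $N=M_{0,i+1}\sim2^{in}$, and in both choose $m=[N/2]$, so that $(N-m+1)^{1/p_2-1/p_1}\sim N^{1/p_2-1/p_1}$.

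The computation is entirely routine once Lemma \ref{gn021ify} is in hand, so I foresee no genuine obstacle; the only point needing a moment's care is the exponent bookkeeping. After absorbing the weight factor $2^{-j\delta}$ (resp. $2^{-is^\prime}$) into the power of $N$ and using $m\sim N$, one obtains $c_m({\rm id})\succeq m^{-(\delta/d+1/p_1-1/p_2)}$ in case (a) and $c_m({\rm id})\succeq m^{-(s^\prime/n+1/p_1-1/p_2)}$ in case (b); since $\min(\frac\delta d,\frac{s^\prime}n)$ selects $\delta/d$ in the first case and $s^\prime/n$ in the second, both exponents equal $-\varkappa$. Monotonicity of the Gelfand numbers then extends the estimate to all $k$, matching the upper bound and completing the proof.
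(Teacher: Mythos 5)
Your proof is correct and follows essentially the same route as the paper's: the lower bound is exactly the paper's argument (Step~5 of Proposition~\ref{an4} adapted via the $S_{j,i}$, $T_{j,i}$ diagram, with the exact formula of Lemma~\ref{gn021ify} in place of Lemma~\ref{an2}, the two cases according to which of $\frac\delta d$, $\frac{s^\prime}n$ is smaller, and $m=[N/2]$), and your exponent bookkeeping checks out. The only deviation is in the upper bound, where the paper reruns the operator-ideal computation of Proposition~\ref{an6} using Lemma~\ref{gn021ify}, whereas you simply invoke $c_k\le a_k$ from (\ref{acd}) together with Proposition~\ref{an6}; this shortcut is legitimate and is precisely the device the paper itself uses for the Kolmogorov analogue in Proposition~\ref{kn7}.
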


\begin{proof}
The proof of this proposition follows exactly as in the proof of
Proposition \ref{an6} with Lemma \ref{an4} replaced by Lemma
\ref{gn021ify}.
\end{proof}

\begin{re}\label{q1q2}
It is remarkable that all results in Section \ref{snss} are
independent of\ $q_1$ and\ $q_2$.
\end{re}

\section{Widths of embeddings of 2-microlocal Besov
Spaces}\label{wmb}

Using basic properties of these $s$-numbers and Proposition
\ref{iso}, we have, for any $s\in\{a,c,d\}$,
$$
s_k\Big({\rm id}, B_{p_1,q_1}^{s_1,s_1^\prime}(\mathbb{R}^n, U),
B_{p_2,q_2}^{s_2,s_2^\prime}(\mathbb{R}^n, U)\Big)\sim s_k\Big({\rm
id},\ell_{q_1}\big(2^{j(s_1-\frac
n{p_1})}\ell_{p_1}(v_1)\big),\ell_{q_2}\big(2^{j(s_2-\frac
n{p_2})}\ell_{p_2}(v_2)\big)\Big),
$$
 with equivalence constants independent of $k\in\mathbb{N}$, cf. also (\ref{AABB}).
This leads us to transfer the results of Section \ref{snss} for
sequence spaces back to function spaces. Theorem \ref{T1} follows
from Proposition \ref{an4}, Proposition \ref{an5} and Proposition
\ref{an6}. Theorem \ref{T2} follows from Propositions
\ref{kn4}\,-\,\ref{kn7}. Theorem \ref{T3} follows from Propositions
\ref{gn4}\,-\,\ref{gn7}.\qed

\begin{re}
If\ \ $U=\{x_0\}$, similar conclusions on the approximation, Gelfand
and Kolmogorov numbers could be made for Corollary 8 in \cite{LS11}.
\end{re}
\begin{re}
If\ $U=\{0\}$, the comparison between our main theorems and the
known results on the approximation, Gelfand and Kolmogorov numbers
of embeddings of Besov spaces with polynomial weights, cf.
\cite{Sk05,ZF10}, could also be easily made, as is shown for entropy
numbers in Remark 4 of \cite{LS11}. We do not go into detail.
\end{re}

\begin{re}
Finally, we wish to mention some open questions. What is the
asymptotic behavior of related n-widths for the other cases unsolved
here (especially the case $\frac\delta d\le\frac
1{\min(p_1^\prime,p_2)}\le\frac {s^\prime}n$ with
$0<p_1<2<p_2\le\infty$ for the approximation numbers), under the
equivalent condition of compactness, $\min(\frac\delta d,\frac
{s^\prime}n)>\frac 1{p^\ast}$? In some cases, the optimal order may
even depend on the microscopic parameters $q_1$ and $q_2$.
\end{re}

\section*{Acknowledgments}
~~~ The authors wish to thank Thomas K$\ddot{\rm u}$hn for sending
us historical remarks concerning the operator ideal technique, which
led us to add Remark \ref{opt}.

The authors are also grateful to Fanglun Huang, Erich Novak and
Leszek Skrzypczak for their direction and help on this work.

 \hspace{5mm}

\end{document}